\definecolor{astral}{RGB}{46,116,181}
\newtheorem{theorem}{Theorem}[section]
\newtheorem{lemma}[theorem]{Lemma}
\newtheorem{definition}[theorem]{Definition}
\definecolor{darkslategray}{rgb}{0.18, 0.31, 0.31}
\definecolor{warmblack}{rgb}{0.0, 0.26, 0.26}
\journal{arXiv.org}
\newcommand{\C}{{\mathbb C}}
\newcommand{\mc}[1]{\mathcal {#1}}
\newcommand{\dg}{{\dagger}}
\newcommand{\n}{{*_{N}}}
\newcommand{\m}{{*_{M}}}
\newcommand{\kp}{{*_{K}}}
\newcommand{\lp}{{*_{L}}}
\begin{document}

\begin{frontmatter}

\title{ \textcolor{warmblack}{\bf Reverse-order law for weighted Moore--Penrose inverse of tensors}}

\author{Krushnachandra Panigrahy$^\dag$$^a$, Debasisha Mishra$^\dag$$^b$}

\address{               $^{\dag}$Department of Mathematics,\\
                        National Institute of Technology Raipur,\\
                        Raipur, Chhattisgarh, India.\\
                        \textit{E-mail$^a$}: \texttt{kcp.224\symbol{'100}gmail.com }\\
                        \textit{E-mail$^b$}: \texttt{dmishra\symbol{'100}nitrr.ac.in. }
        }

\begin{abstract}
\textcolor{warmblack}{
In this paper, we provide a few properties of the weighted Moore--Penrose inverse for an arbitrary order tensor via the Einstein product. We again obtain some new sufficient conditions for the reverse-order law of the weighted  Moore--Penrose inverse for even-order square tensors. We then present several characterizations of the reverse-order law for tensors of arbitrary order.}

\end{abstract}

\begin{keyword}
Tensor  \sep Moore--Penrose inverse \sep Weighted Moore--Penrose inverse \sep Einstein product \sep Reverse-order law.
\end{keyword}

\end{frontmatter}

\section{Introduction}\label{sec1}
A {\it tensor} is a multidimensional array. An element of ${\C}^{I_{1}\times\cdots\times I_{N}}$  is an  {\it $N$th-order tensor}. Here  $I_{1}, I_{2}, \cdots, I_N$ are dimensions of the first, second, $\cdots$, $N$th way, respectively. The {\it order} of a tensor is the number of its dimensions. 
The scalars, vectors and matrices are respectively zeroth-order, first-order and second-order tensors.
The tensors of order three or higher are known as the higher-order tensors. These are denoted by calligraphic letters like  $\mc{A}$. An element of an $N$th order tensor $\mc{A}$ at $(i_{1},\cdots ,i_{N})$th position is denoted by $a_{i_{1}\cdots i_{N}}$. For more details, we refer to the recent books  \cite{ql, smilde, yw} on tensors.

There has been active research on tensors for the past four decades. But, a little research contributions on the theory and applications of generalized inverses of tensors are in the literature. In fact,  a generalized inverse called the {\it Moore--Penrose inverse of an even-order tensor} via the Einstein product was introduced first by  Sun { \it et al.} \cite{sun} in 2016. 
Then the authors find the minimum-norm least-squares solution of some multilinear systems by using the notion of Moore--Penrose inverse. In the next year, Behera and Mishra \cite{bm} continued the same study and proposed different types of generalized inverses of tensors. In 2018, Panigrahy and Mishra \cite{km} improved the definition of the Moore--Penrose inverse of an even-order tensor to a tensor of any order via the same product which also appeared in  \cite{lz} and \cite{scn}. Panigrahy and Mishra \cite{km} also introduced an extension of the Moore--Penrose inverse of a tensor called as the {\it Product Moore--Penrose inverse}.  The definition of the Moore--Penrose inverse of an arbitrary order tensor is recalled below.
\begin{definition}(Definition 3.3, \cite{lz} $\&$ Definition 1.1, \cite{km})\label{defmpi}\\
Let $\mc{X} \in {\C}^{I_{1}\times\cdots\times I_{M} \times J_{1}\times \cdots \times J_{N}}$. The tensor $\mc{Y} \in{\C}^{J_{1}\times\cdots\times J_{N} \times I_{1} \times\cdots\times I_{M}}$ satisfying the following four tensor equations:
\begin{eqnarray}
\mc{X}\n\mc{Y}\m\mc{X} &=& \mc{X};\label{mpeq1}\\
\mc{Y}\m\mc{X}\n\mc{Y} &=& \mc{Y};\label{mpeq2}\\
(\mc{X}\n\mc{Y})^{H} &=& \mc{X}\m\mc{Y};\label{mpeq3}\\
(\mc{Y}\m\mc{X})^{H} &=& \mc{Y}\n\mc{X},\label{mpeq4}
\end{eqnarray}
is defined as the \textbf{Moore--Penrose inverse} of $\mc{X}$, and is denoted by $\mc{X}^{\dg}$.
\end{definition}
\noindent In the above definition, $(\cdot)^{H}$ denotes the conjugate transpose of $(\cdot)$ and $\n$ denotes the {\it Einstein product} \cite{ein} of tensors, and is defined by 
\begin{equation*}\label{Eins}
(\mc{A}\n\mc{B})_{i_{1}\cdots i_{M}k_{1}\cdots k_{L}}
=\displaystyle\sum_{j_{1}\cdots j_{N}}a_{{i_{1}\cdots i_{M}}{j_{1}\cdots j_{N}}}b_{{j_{1}\cdots j_{N}}{k_{1}\cdots k_{L}}}
\end{equation*}
for tensors $\mc{A} \in \mathbb{C}^{I_{1}\times\cdots\times I_{M} \times J_{1}\times\cdots\times J_{N} }$ and $\mc{B} \in \mathbb{C}^{J_{1}\times\cdots\times J_{N} \times K_{1} \times\cdots\times K_{L} }$.
In the case of an even-order invertible tensor,  Definition \ref{defmpi} coincides with the notion of the inverse which was first introduced by Brazell {\it et al.} \cite{BraliNT13}. They also showed that such an inverse can be computed using the singular value decomposition of the same tensor (see Lemma 3.1, \cite{BraliNT13}). 
The idea of introducing generalized inverses of  tensors origins from the necessity of finding a solution of a given tensor multilinear system  (see \cite{ bm, jiw2, jiw1, sun}). 

The equality $(\mc{A}\n\mc{B})^\dg = \mc{B}^\dg \n
 \mc{A}^\dg $ for any two complex tensors $\mc{A}$ and $\mc{B}$ of arbitrary order,   is called as the {\it reverse-order law} for the Moore--Penrose inverse of tensors. Also called as two term reverse-order law. If the above equality contains the Einstein product of three different tensors, then it is called as the {\it triple  reverse-order law} (or three term reverse-order law). In the last section of \cite{bm}, the authors proposed two open problems. The first one is about the reverse-order law for the Moore--Penrose inverse of tensors and the second one is about full rank factorization of tensors. In 2018, Panigrahy {\it et al.} \cite{kbm} answered the first one for even-order tensors.  Again, Panigrahy and Mishra \cite{pm} added more results to the same theory but for arbitrary order tensors.  In the same year, Liang and Zheng \cite{lz}  solved the other problem.

 In 2017, Ji and Wei \cite{jiw1}  introduced another extension of the Moore--Penrose inverse of an even-order tensor called {\it weighted Moore--Penrose inverse}  and established the relation between the minimum-norm least-squares solution of a multilinear system and the weighted Moore--Penrose inverse. 
Very recently, Behera {\it et al.} \cite{bmm} proposed a few identities involving the weighted Moore--Penrose inverses of tensors. They also provided a method of computation of the weighted Moore--Penrose inverse using full rank decomposition of a tensor which was introduced by  Liang and Zheng \cite{lz}. Among other results, they attempted the problem of triple  reverse-order law mentioned in the conclusion section of \cite{km} for  the weighted Moore--Penrose inverse.
In this paper, 
 our aim is to study the {\it reverse-order law} for the weighted Moore--Penrose inverse of tensors via the Einstein product.

 In this context, the paper is organized as follows. Section 2 collects various useful definitions and results. A few properties of the weighted Moore--Penrose inverse of tensors are explained in Section 3. Section 4 contains all our main results and is devoted to the reverse-order law for the weighted Moore--Penrose inverse of tensors. It has two subsections, the first subsection contains a few necessary and sufficient conditions of reverse-order law for the square tensors while the second subsection is for arbitrary order tensors.
 
 \section{Prerequisites}\label{sec2}
Here, we collect some definitions and earlier results which will be used to prove the main results.
 We begin with the definition of a diagonal tensor. A tensor in ${\C}^{I_{1}\times \cdots \times I_{N}\times I_{1}\times \cdots \times I_{N}}$ with entries $(\mc{D})_{{i_1}...{i_N}{j_1}...{j_N}}$  is called a {\it diagonal
   tensor} if $d_{{i_1}...{i_N}{j_1}...{j_N}} = 0$ for $(i_1,\cdots,i_N) \neq (j_1,\cdots,j_N)$. A tensor $\mc{I}\in {\C}^{I_{1}\times\cdots\times I_{N}\times I_{1}\times\cdots\times I_{N}}$ with entries  $ (\mc{I})_{i_{1} \cdots i_{N}j_{1}\cdots j_{N}} = \prod_{k=1}^{N} \delta_{i_{k} j_{k}}$ is  called   an {\it identity tensor}  if
$ \delta_{i_{k}j_{k}}= \begin{cases}
 1, \text{ if } i_{k} = j_{k}\\
 0, \text{ otherwise}
 \end{cases}$.
 The {\it conjugate transpose} of a tensor $\mc{A}\in {\C}^{I_{1}\times\cdots\times I_{M}\times J_{1}\times \cdots \times J_{N}}$ is denoted by $\mc{A}^{H}$, and is defined as $(\mc{A}^{H})_{j_{1}\hdots j_{N}i_{1}\hdots i_{M}}=\overline{a}_{i_{1}\hdots i_{M}j_{1}\hdots j_{N}},$ 
where the over-line stands for the  conjugate of $a_{i_{1}\hdots i_{M}j_{1}\hdots j_{N}}$. A tensor $\mc{A}\in
\mathbb{C}^{I_1\times\cdots\times I_N \times I_1 \times\cdots\times
I_N}$ is {\it Hermitian}  if  $\mc{A}=\mc{A}^{H}$ and {\it skew-Hermitian} if $\mc{A}= - \mc{A}^{H}$. Further, a tensor
$\mc{A}\in \mathbb{C}^{I_1\times\cdots\times I_N \times I_1
\times\cdots\times I_N}$  is {\it unitary}  if  $\mc{A}\n
\mc{A}^{H}=\mc{A}^{H}\n \mc{A}=\mc{I}$, and {\it idempotent}  if $\mc{A}
\n \mc{A}= \mc{A}.$ Ji and Wei \cite{jiw1} define the class of Hermitian positive definite tensors as below.
\begin{definition}(Definition 1, \cite{jiw1})\label{dhpd}
Let $\mc{P}\in {\C}^{I_{1}\times \cdots \times I_{K}\times I_{1}\times \cdots \times I_{K}}$, if there exists a unitary tensor $\mc{U}$ such that 
\begin{equation}
    \mc{P}=\mc{U}\kp\mc{D}\kp\mc{U}^{H},
\end{equation}
where $\mc{D}$ is a diagonal tensor with positive diagonal entries, then $\mc{P}$ is said to be Hermitian positive definite.
\end{definition}
 For a Hermitian positive definite tensor $\mc{P}$ in the Definition \ref{dhpd}, let $\mc{D}^{1\slash2}$ be the diagonal tensor obtained from $\mc{D}$ by taking the square root of all its diagonal entries and define 
 \begin{equation*}
     \mc{P}^{1\slash2}=\mc{U}\kp\mc{D}^{1\slash2}\kp\mc{U}^{H},
 \end{equation*}
the square root of $\mc{P}$. Notice that $\mc{P}^{1\slash 2}$ is always non-singular and its inverse is denoted by $\mc{P}^{-1\slash2}$. Moreover, $ (\mc{P}^{1\slash2})^{H}=\mc{P}^{1\slash2}.$ In 2017, Ji and Wei \cite{jiw1} introduce the weighted Moore-Penrose inverse for even-order tensors, however, very recently Behera {\it et al.} \cite{bmm} defines it for any tensor and the definition is recalled next.
\begin{definition} (Definition 2.2, \cite{bmm})\label{wmpi}
Let $\mc{A}\in {\C}^{I_{1}\times \cdots\times I_{M}\times J_{1}\times \cdots\times J_{N}}$, and $\mc{M}\in {\C}^{I_{1}\times \cdots\times I_{M}\times I_{1}\times \cdots\times I_{M}}$, $\mc{N}\in {\C}^{J_{1}\times \cdots\times J_{N}\times J_{1}\times \cdots\times J_{N}}$ be Hermitian positive definite tensors. Then the unique tensor $\mc{X}$ in $\mc{C}^{J_{1}\times\cdots\times J_{N}\times I_{1}\times \cdots\times I_{M}}$ satisfying
\begin{eqnarray}
\mc{A}\n\mc{X}\m\mc{A}&=&\mc{A};\label{wmpie1}\\
\mc{X}\m\mc{A}\n\mc{X}&=&\mc{X};\label{wmpie2}\\
(\mc{M}\m\mc{A}\n\mc{X})^{H}&=&\mc{M}\m\mc{A}\n\mc{X};\label{wmpie3}\\
(\mc{N}\n\mc{X}\m\mc{A})^{H}&=&\mc{N}\n\mc{X}\m\mc{A},\label{wmpie4}
\end{eqnarray}
 is called the {\bf weighted Moore--Penrose} inverse of tensor $\mc{A}$ and this unique $\mc{X}$ is denoted by $\mc{A}^{\dg}_{\mc{M}\mc{N}}$.
\end{definition} 
In particular, when $\mc{M}$ and $\mc{N}$ are both identity tensors, then $\mc{A}^{\dg}_{\mc{I}_{\mc{M}}\mc{I}_{\mc{N}}}=\mc{A}^{\dg}$,
where $\mc{I}_{\mc{M}}\in {\C}^{I_{1}\times \cdots\times I_{M}\times I_{1}\times \cdots\times I_{M}}$ and $\mc{I}_{\mc{N}}\in {\C}^{J_{1}\times \cdots\times J_{N}\times J_{1}\times \cdots\times J_{N}}$ are identity tensors. The null space and the range space of $\mc{A}\in {\C}^{I_{1}\times \cdots\times I_{M}\times J_{1}\times \cdots\times J_{N}}$ are defined to be $N(\mc{A})=\{\mc{X}\in{\C}^{J_{1}\times \cdots \times J_{N}}:\mc{A}\n\mc{X}=\mc{O}\}$ and $R(\mc{A})=\{\mc{A}\n\mc{X}:\mc{X}\in{\C}^{J_{1}\times \cdots\times J_{N}}\}$, respectively.
Due to Ji and Wei, \cite{jiw1} we have the following property for weighted Moore--Penrose inverse of an even-order tensor.
\begin{lemma}(Lemma 2, \cite{jiw1})
Let $\mc{A}\in{\C}^{I_{1}\times \cdots\times I_{N}\times J_{1}\times \cdots\times J_{N}}$, and let $\mc{M}$ and $\mc{N}$ be Hermitian positive definite tensors in ${\C}^{I_{1}\times \cdots\times I_{N}\times I_{1}\times \cdots\times I_{N}}$ and
${\C}^{J_{1}\times \cdots\times J_{N}\times J_{1}\times \cdots\times J_{N}}$, respectively. Then
\begin{itemize}
    \item[(i)] $(\mc{A}^{\dg}_{\mc{M}\mc{N}})^{\dg}_{\mc{N}\mc{M}}=\mc{A}$,
    \item[(ii)] $(\mc{A}^{\dg}_{\mc{M}\mc{N}})^{H}=(\mc{A}^{H})^{\dg}_{\mc{N}^{-1}\mc{M}^{-1}}$.
\end{itemize}
\end{lemma}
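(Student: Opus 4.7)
The proof plan is to invoke the uniqueness of the weighted Moore--Penrose inverse in both parts and verify that the proposed candidate satisfies the four defining equations of Definition \ref{wmpi}. No deep idea is needed; the argument is a careful bookkeeping of indices, conjugate transposes, and Hermiticity of the weights.

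For part (i), I would set $\mc{X}:=\mc{A}^{\dg}_{\mc{M}\mc{N}}$ and write out the four equations that characterize $(\mc{X})^{\dg}_{\mc{N}\mc{M}}$. Because $\mc{X}\in{\C}^{J_1\times\cdots\times J_N\times I_1\times\cdots\times I_M}$, the dimensions force the weights to be $\mc{N}$ on the $J$-side and $\mc{M}$ on the $I$-side, and the four equations for the unique $\mc{Y}$ satisfying them become precisely the system
\begin{align*}
\mc{X}\m\mc{Y}\n\mc{X}&=\mc{X}, &\mc{Y}\n\mc{X}\m\mc{Y}&=\mc{Y},\\
(\mc{N}\n\mc{Y}\m\mc{X})^{H}&=\mc{N}\n\mc{Y}\m\mc{X}, &(\mc{M}\m\mc{X}\n\mc{Y})^{H}&=\mc{M}\m\mc{X}\n\mc{Y}.
\end{align*}
Substituting $\mc{Y}=\mc{A}$ recovers exactly equations (\ref{wmpie2}), (\ref{wmpie1}), (\ref{wmpie4}), (\ref{wmpie3}) satisfied by $\mc{X}=\mc{A}^{\dg}_{\mc{M}\mc{N}}$. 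Uniqueness of the weighted Moore--Penrose inverse then gives $(\mc{A}^{\dg}_{\mc{M}\mc{N}})^{\dg}_{\mc{N}\mc{M}}=\mc{A}$.

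For part (ii), again set $\mc{X}:=\mc{A}^{\dg}_{\mc{M}\mc{N}}$ and check that $\mc{Y}:=\mc{X}^{H}$ fulfils the four equations defining $(\mc{A}^{H})^{\dg}_{\mc{N}^{-1}\mc{M}^{-1}}$. The first two equations follow by taking the conjugate transpose of (\ref{wmpie1}) and (\ref{wmpie2}) and using $(\mc{U}\n\mc{V})^{H}=\mc{V}^{H}\n\mc{U}^{H}$ for tensor products of compatible orders. For the weighted Hermiticity equations, I would use that $\mc{M}$, $\mc{N}$, and hence $\mc{M}^{-1}$, $\mc{N}^{-1}$ are Hermitian. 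Concretely, equation (\ref{wmpie4}) rewritten as $\mc{A}^{H}\m\mc{X}^{H}\n\mc{N}=\mc{N}\n\mc{X}\m\mc{A}$ can be multiplied on the left and right by $\mc{N}^{-1}$ to obtain
\begin{equation*}
\mc{N}^{-1}\n\mc{A}^{H}\m\mc{X}^{H}=\mc{X}\m\mc{A}\n\mc{N}^{-1}=(\mc{N}^{-1}\n\mc{A}^{H}\m\mc{X}^{H})^{H},
\end{equation*}
which is exactly the required Hermiticity condition; the remaining equation is verified symmetrically from (\ref{wmpie3}).

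I do not anticipate a real obstacle here. The only subtlety is to keep the subscripts $\n$ and $\m$ on the Einstein product correct when moving across conjugate transposes, since under $(\cdot)^{H}$ the $I$- and $J$-index groups swap roles; but the contraction subscripts are determined uniquely by the shapes of the tensors involved, so a consistent rewrite of each of the four equations is purely mechanical.
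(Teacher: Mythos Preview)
Your proposal is correct, but note that the paper itself does not supply a proof of this lemma: it is quoted from \cite{jiw1} in the Prerequisites section without argument. Your direct verification that the candidate tensor satisfies the four defining equations of Definition~\ref{wmpi}, followed by an appeal to uniqueness, is the standard route and is exactly how such identities are established in the cited source.
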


\section{Weighted conjugate transpose}
Ji and Wei \cite{jiw1} defines the weighted conjugate transpose of a tensor whereas Behera {\it et al.} \cite{bmm} defines it formally for any tensor as below.
\begin{definition}(Definition 2.8, \cite{bmm})\label{wctd}
The weighted conjugate transpose of the tensor $\mc{A}\in {\C}^{I_{1}\times \cdots \times I_{M}\times J_{1}\times \cdots \times J_{N}}$, denoted by $\mc{A}^{\#}_{\mc{M}\mc{N}}$, is defined by
\begin{equation}
    \mc{A}^{\#}_{\mc{M}\mc{N}}=\mc{N}^{-1}\n\mc{A}^{H}\m\mc{M},
\end{equation}
where $\mc{M}\in {\C}^{I_{1}\times \cdots \times I_{M}\times I_{1}\times \cdots \times I_{M}} $ and $\mc{N}\in {\C}^{J_{1}\times \cdots \times J_{N}\times J_{1}\times \cdots \times J_{N}}$ are Hermitian positive definite tensors.
\end{definition}
Using Definition \ref{wctd}, one can easily verify the following properties of the weighted conjugate transpose.
\begin{lemma}
Let $\mc{A}, \mc{B}\in {\C}^{I_{1}\times \cdots \times I_{M}\times J_{1}\times \cdots \times J_{N}}$. Also, let $\mc{M}\in {\C}^{I_{1}\times \cdots\times I_{M}\times I_{1}\times \cdots\times I_{M}}$ and $\mc{N}\in {\C}^{J_{1}\times \cdots\times J_{N}\times J_{1}\times \cdots\times J_{N}}$ be Hermitian positive definite tensors. Then
\begin{eqnarray*}
(\mc{A}+\mc{B})^{\#}_{\mc{M}\mc{N}}&=&\mc{A}^{\#}_{\mc{M}\mc{N}}+\mc{B}^{\#}_{\mc{M}\mc{N}},\\
(\mc{A}^{\#}_{\mc{M}\mc{N}})^{H}&=&(\mc{A}^{H})^{\#}_{\mc{N}^{-1}\mc{M}^{-1}}.
\end{eqnarray*}
\end{lemma}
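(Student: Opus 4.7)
The plan is to verify both identities by direct unwinding of the defining formula
\[
\mc{A}^{\#}_{\mc{M}\mc{N}}=\mc{N}^{-1}\n\mc{A}^{H}\m\mc{M}
\]
from Definition \ref{wctd}, using only elementary algebraic properties of the Einstein product together with the Hermitian structure of the weights. No new machinery should be required; the lemma is essentially a bookkeeping exercise.

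For the additivity identity, I would substitute $\mc{A}+\mc{B}$ into the definition and invoke two facts: first, that $(\mc{A}+\mc{B})^H = \mc{A}^H + \mc{B}^H$ is immediate from the entrywise definition of conjugate transpose; and second, that the Einstein product distributes over addition on both sides, which is a trivial index-level verification. These two together give
\[
(\mc{A}+\mc{B})^{\#}_{\mc{M}\mc{N}} = \mc{N}^{-1}\n(\mc{A}^{H}+\mc{B}^{H})\m\mc{M} = \mc{A}^{\#}_{\mc{M}\mc{N}} + \mc{B}^{\#}_{\mc{M}\mc{N}}
\]
in one line.

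For the second identity, I would first expand the left-hand side using the anti-homomorphism property of the conjugate transpose with respect to the Einstein product, namely $(\mc{X}\n\mc{Y})^H = \mc{Y}^H\n\mc{X}^H$ (and the analogous statement for $\m$), which is immediate from the definition of the Einstein product combined with elementary properties of complex conjugation. Applying it twice to $\mc{N}^{-1}\n\mc{A}^{H}\m\mc{M}$ and using that $\mc{M}^H = \mc{M}$ together with $(\mc{N}^{-1})^H = \mc{N}^{-1}$ — the latter because $\mc{N}$ being Hermitian positive definite forces $\mc{N}^{-1}$ to be Hermitian as well — yields $\mc{M}\m\mc{A}\n\mc{N}^{-1}$. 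Next, I would expand the right-hand side $(\mc{A}^{H})^{\#}_{\mc{N}^{-1}\mc{M}^{-1}}$ by specializing the defining formula to $\mc{A}^H$, whose shape is $J_{1}\times\cdots\times J_{N}\times I_{1}\times\cdots\times I_{M}$, so that $\mc{N}^{-1}$ plays the first weight role and $\mc{M}^{-1}$ plays the second. This gives $(\mc{M}^{-1})^{-1}\m(\mc{A}^H)^H\n\mc{N}^{-1} = \mc{M}\m\mc{A}\n\mc{N}^{-1}$, matching the previous expression.

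The only real point of caution — and hence the main \emph{obstacle}, although it is notational rather than mathematical — is keeping track of which contraction symbol applies at each step when one passes from $\mc{A}$ to $\mc{A}^H$: the roles of $M$ and $N$ in the index count are swapped because the dimensions of the tensor are permuted, and correspondingly the roles of the weights $\mc{M}$ and $\mc{N}$ are interchanged on the right-hand side. Once this bookkeeping is handled carefully, both identities reduce to routine algebra at the entrywise level.
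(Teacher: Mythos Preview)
Your proposal is correct and is exactly the approach the paper has in mind: the paper does not write out a proof at all, merely stating that ``using Definition~\ref{wctd}, one can easily verify'' these properties, and your direct unwinding of the defining formula $\mc{A}^{\#}_{\mc{M}\mc{N}}=\mc{N}^{-1}\n\mc{A}^{H}\m\mc{M}$ is precisely that verification.
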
 
Like the conjugate transpose of a tensor, the weighted conjugate transpose also satisfies reverse-order law and is stated below.
\begin{lemma}(Lemma 2.9, \cite{bmm})
Let $\mc{A}\in {\C}^{I_{1}\times \cdots \times I_{M}\times J_{1}\times \cdots \times J_{N}}$ and $\mc{B}\in {\C}^{J_{1}\times \cdots \times J_{N}\times K_{1}\times \cdots \times K_{L}}$. Also, let $\mc{M}\in {\C}^{I_{1}\times \cdots\times I_{M}\times I_{1}\times \cdots\times I_{M}}$, $\mc{N}\in {\C}^{J_{1}\times \cdots\times J_{N}\times J_{1}\times \cdots\times J_{N}}$ and $\mc{L}\in {\C}^{K_{1}\times \cdots\times K_{L}\times K_{1}\times \cdots\times K_{L}}$ be Hermitian positive definite tensors. Then
\begin{eqnarray*}
(\mc{A}^{\#}_{\mc{M}\mc{N}})^{\#}_{\mc{N}\mc{M}}&=&\mc{A},\\
    (\mc{A}\n\mc{B})^{\#}_{\mc{M}\mc{L}}&=&\mc{B}^{\#}_{\mc{N}\mc{L}}\n\mc{A}^{\#}_{\mc{M}\mc{N}}.
\end{eqnarray*}
\end{lemma}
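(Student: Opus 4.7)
The plan is to prove both identities by direct computation from Definition \ref{wctd}, leaning on three basic facts: the ordinary reverse-order law $(\mc{A}\n\mc{B})^H=\mc{B}^H\n\mc{A}^H$ for the conjugate transpose of an Einstein product; the Hermiticity of the weights $\mc{M},\mc{N},\mc{L}$ and of their inverses, which exist because the weights are positive definite; and the associativity of the Einstein product whenever the adjacent contractions are compatible.

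For the first identity, I would unfold the outer weighted conjugate transpose according to Definition \ref{wctd}, so that
\[
(\mc{A}^{\#}_{\mc{M}\mc{N}})^{\#}_{\mc{N}\mc{M}}=\mc{M}^{-1}\m(\mc{A}^{\#}_{\mc{M}\mc{N}})^H\n\mc{N},
\]
and then apply the ordinary reverse-order law together with $\mc{M}^H=\mc{M}$ and $(\mc{N}^{-1})^H=\mc{N}^{-1}$ to rewrite $(\mc{N}^{-1}\n\mc{A}^H\m\mc{M})^H$ as $\mc{M}\m\mc{A}\n\mc{N}^{-1}$. Plugging this back and collapsing $\mc{M}^{-1}\m\mc{M}=\mc{I}$ and $\mc{N}^{-1}\n\mc{N}=\mc{I}$ returns $\mc{A}$.

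For the second identity, I would expand the left-hand side as
\[
(\mc{A}\n\mc{B})^{\#}_{\mc{M}\mc{L}}=\mc{L}^{-1}\lp(\mc{A}\n\mc{B})^H\m\mc{M},
\]
push the reverse-order law for $(\cdot)^H$ inside to obtain $\mc{L}^{-1}\lp\mc{B}^H\n\mc{A}^H\m\mc{M}$, and independently expand the right-hand side by plugging the definition into each factor,
\[
\mc{B}^{\#}_{\mc{N}\mc{L}}\n\mc{A}^{\#}_{\mc{M}\mc{N}}=(\mc{L}^{-1}\lp\mc{B}^H\n\mc{N})\n(\mc{N}^{-1}\n\mc{A}^H\m\mc{M}).
\]
Associativity then lets the middle factor $\mc{N}\n\mc{N}^{-1}$ collapse to the identity, producing exactly the expression obtained from the left-hand side.

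The main obstacle is purely bookkeeping: one must track which Einstein product ($\n$, $\m$, or $\lp$) contracts which block of indices, and confirm that the adjacent factors have matching shapes so that associativity applies as written. Once the shapes are lined up, both identities are mechanical consequences of the three facts above, and no deeper structure is required.
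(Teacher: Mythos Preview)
Your proposal is correct. The paper itself does not supply a proof of this lemma; it is quoted verbatim from \cite{bmm} and stated without argument. Your direct unfolding of Definition~\ref{wctd}, combined with the ordinary reverse-order law $(\mc{A}\n\mc{B})^{H}=\mc{B}^{H}\n\mc{A}^{H}$, the Hermiticity of $\mc{M},\mc{N},\mc{L}$ (and hence of their inverses), and associativity of the Einstein product, is exactly the intended and standard route, and each step you outline goes through as written.
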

The only tensor whose Einstein product with its weighted conjugate transpose results zero tensor is zero tensor. This is stated in next result. 
\begin{theorem}Let $\mc{A}\in {\C}^{I_{1}\times \cdots\times I_{M}\times J_{1}\times \cdots\times J_{N}}$, and $\mc{M}\in {\C}^{I_{1}\times \cdots\times I_{M}\times I_{1}\times \cdots\times I_{M}}$ and $\mc{N}\in {\C}^{J_{1}\times \cdots\times J_{N}\times J_{1}\times \cdots\times J_{N}}$ be Hermitian positive definite tensors. Then either of $\mc{A}^{\#}_{\mc{M}\mc{N}}\m\mc{A}=\mc{O}$ and $\mc{A}\n\mc{A}^{\#}_{\mc{M}\mc{N}}=\mc{O}$ implies $\mc{A}=\mc{O}$.
\end{theorem}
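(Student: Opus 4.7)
The plan is to reduce each hypothesis to an identity of the form $\mc{C}^{H}\m\mc{C}=\mc{O}$ (or $\mc{C}\n\mc{C}^{H}=\mc{O}$), from which the vanishing of $\mc{C}$ follows immediately by inspecting the diagonal entries. Throughout, I will make essential use of the factorization of the Hermitian positive definite weights via their square roots, together with the identities $(\mc{M}^{1/2})^{H}=\mc{M}^{1/2}$ and $(\mc{N}^{-1/2})^{H}=\mc{N}^{-1/2}$ recorded in Section~2.

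First I will unfold the definition $\mc{A}^{\#}_{\mc{M}\mc{N}}=\mc{N}^{-1}\n\mc{A}^{H}\m\mc{M}$ in the hypothesis $\mc{A}^{\#}_{\mc{M}\mc{N}}\m\mc{A}=\mc{O}$. Pre-multiplying both sides by $\mc{N}$ eliminates $\mc{N}^{-1}$ and yields $\mc{A}^{H}\m\mc{M}\m\mc{A}=\mc{O}$. Writing $\mc{M}=\mc{M}^{1/2}\m\mc{M}^{1/2}$ and setting $\mc{B}:=\mc{M}^{1/2}\m\mc{A}$, the reverse-order law for the conjugate transpose (applied to $\mc{M}^{1/2}\m\mc{A}$) gives
\begin{equation*}
\mc{B}^{H}\m\mc{B} \;=\; \mc{A}^{H}\m\mc{M}^{1/2}\m\mc{M}^{1/2}\m\mc{A} \;=\; \mc{A}^{H}\m\mc{M}\m\mc{A} \;=\; \mc{O}.
\end{equation*}
The key observation is that the diagonal entries of $\mc{B}^{H}\m\mc{B}$ are precisely the squared moduli summed over the $I$-indices, namely $\sum_{i_{1},\ldots,i_{M}}\lvert b_{i_{1}\cdots i_{M}j_{1}\cdots j_{N}}\rvert^{2}$. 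Vanishing of every such sum forces every entry of $\mc{B}$ to be zero, so $\mc{B}=\mc{O}$; pre-multiplying by $\mc{M}^{-1/2}$ yields $\mc{A}=\mc{O}$.

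The second implication is symmetric. Starting from $\mc{A}\n\mc{A}^{\#}_{\mc{M}\mc{N}}=\mc{O}$, post-multiplying by $\mc{M}^{-1}$ gives $\mc{A}\n\mc{N}^{-1}\n\mc{A}^{H}=\mc{O}$. Now set $\mc{C}:=\mc{A}\n\mc{N}^{-1/2}$ and use $\mc{N}^{-1}=\mc{N}^{-1/2}\n\mc{N}^{-1/2}$ together with $(\mc{N}^{-1/2})^{H}=\mc{N}^{-1/2}$ to recognize $\mc{C}\n\mc{C}^{H}=\mc{A}\n\mc{N}^{-1}\n\mc{A}^{H}=\mc{O}$. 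The diagonal-entry argument again forces $\mc{C}=\mc{O}$, and post-multiplying by $\mc{N}^{1/2}$ yields $\mc{A}=\mc{O}$.

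There is no real obstacle here beyond careful bookkeeping; the only point that demands attention is making sure that the Einstein contraction symbols $\m$ and $\n$ line up correctly after each substitution (in particular that $\mc{M}^{1/2}$ is contracted with $\mc{A}$ over the $I$-indices and $\mc{N}^{-1/2}$ with $\mc{A}$ over the $J$-indices), and that one invokes the reverse-order law for the ordinary conjugate transpose (not the weighted one) when computing $(\mc{M}^{1/2}\m\mc{A})^{H}$ and $(\mc{A}\n\mc{N}^{-1/2})^{H}$. Once these identifications are made, the proof is a short two-line computation in each direction.
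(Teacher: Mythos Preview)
Your argument is correct: unfolding $\mc{A}^{\#}_{\mc{M}\mc{N}}=\mc{N}^{-1}\n\mc{A}^{H}\m\mc{M}$, cancelling the invertible weight on one side, and then absorbing the remaining positive definite weight into a square root so that the hypothesis becomes $\mc{B}^{H}\m\mc{B}=\mc{O}$ (respectively $\mc{C}\n\mc{C}^{H}=\mc{O}$) is exactly the right move, and the diagonal-entry computation $(\mc{B}^{H}\m\mc{B})_{j_{1}\cdots j_{N}j_{1}\cdots j_{N}}=\sum_{i_{1},\ldots,i_{M}}|b_{i_{1}\cdots i_{M}j_{1}\cdots j_{N}}|^{2}$ forces $\mc{B}=\mc{O}$.

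As for a comparison: the paper states this theorem without proof, so there is no argument to compare against. Your write-up is the natural one and would serve perfectly well as the omitted proof; the only cosmetic point is that the final paragraph (the commentary about bookkeeping and contraction symbols) reads more like a proof sketch than a finished proof, so in a final version you could simply drop it and let the two short computations stand on their own.
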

During the investigation of the reverse-order law for the weighted Moore--Penrose inverse of tensors, we observe that the weighted conjugate transpose of the Einstein product of a tensor with its weighted Moore--Penrose inverse remains unaltered.
\begin{theorem}
Let $\mc{A}\in {\C}^{I_{1}\times \cdots\times I_{M}\times J_{1}\times \cdots\times J_{N}}$, and $\mc{M}\in {\C}^{I_{1}\times \cdots\times I_{M}\times I_{1}\times \cdots\times I_{M}}$ and $\mc{N}\in {\C}^{J_{1}\times \cdots\times J_{N}\times J_{1}\times \cdots\times J_{N}}$ be Hermitian positive definite tensors. Then \begin{equation*}
    (\mc{A}\n\mc{A}^{\dg}_{\mc{M}\mc{N}})^{\#}_{\mc{M}\mc{M}}=\mc{A}\n\mc{A}^{\dg}_{\mc{M}\mc{N}}.
\end{equation*}
\end{theorem}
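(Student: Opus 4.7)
The plan is to unfold both sides through Definition \ref{wctd} and then match them using equation (\ref{wmpie3}) of the weighted Moore--Penrose inverse together with the fact that $\mc{M}$ is Hermitian.

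First, I would write the left-hand side explicitly. Since $\mc{A}\n\mc{A}^{\dg}_{\mc{M}\mc{N}}$ lies in $\C^{I_{1}\times\cdots\times I_{M}\times I_{1}\times\cdots\times I_{M}}$, Definition \ref{wctd} with both weights equal to $\mc{M}$ gives
\[
(\mc{A}\n\mc{A}^{\dg}_{\mc{M}\mc{N}})^{\#}_{\mc{M}\mc{M}} \;=\; \mc{M}^{-1}\m(\mc{A}\n\mc{A}^{\dg}_{\mc{M}\mc{N}})^{H}\m\mc{M}.
\]
So the goal reduces to showing that the right-hand side equals $\mc{A}\n\mc{A}^{\dg}_{\mc{M}\mc{N}}$, i.e., that $(\mc{A}\n\mc{A}^{\dg}_{\mc{M}\mc{N}})^{H}\m\mc{M} = \mc{M}\m\mc{A}\n\mc{A}^{\dg}_{\mc{M}\mc{N}}$.

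Next, I would invoke the third defining equation (\ref{wmpie3}) with $\mc{X}=\mc{A}^{\dg}_{\mc{M}\mc{N}}$, which states $(\mc{M}\m\mc{A}\n\mc{A}^{\dg}_{\mc{M}\mc{N}})^{H}=\mc{M}\m\mc{A}\n\mc{A}^{\dg}_{\mc{M}\mc{N}}$. Expanding the left-hand conjugate transpose via the standard reverse-order law $(\mc{B}\m\mc{C})^{H}=\mc{C}^{H}\m\mc{B}^{H}$ and using $\mc{M}^{H}=\mc{M}$, this becomes
\[
(\mc{A}\n\mc{A}^{\dg}_{\mc{M}\mc{N}})^{H}\m\mc{M} \;=\; \mc{M}\m\mc{A}\n\mc{A}^{\dg}_{\mc{M}\mc{N}},
\]
which is exactly the identity needed. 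Multiplying both sides on the left by $\mc{M}^{-1}$ (which exists because $\mc{M}$ is Hermitian positive definite by Definition \ref{dhpd}) gives $\mc{M}^{-1}\m(\mc{A}\n\mc{A}^{\dg}_{\mc{M}\mc{N}})^{H}\m\mc{M}=\mc{A}\n\mc{A}^{\dg}_{\mc{M}\mc{N}}$, completing the proof.

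There is essentially no conceptual obstacle; the work is really just index bookkeeping. The only care required is to check that every Einstein contraction lines up, so that $\m$ (contracting the $M$-indexed block) and $\n$ (contracting the $N$-indexed block) are used in the right places when expanding $(\mc{A}\n\mc{A}^{\dg}_{\mc{M}\mc{N}})^{H}=(\mc{A}^{\dg}_{\mc{M}\mc{N}})^{H}\n\mc{A}^{H}$ and when combining with $\mc{M}$ and $\mc{M}^{-1}$ on the $I$-indexed sides. Everything else is a direct consequence of (\ref{wmpie3}) and the Hermitian property of $\mc{M}$.
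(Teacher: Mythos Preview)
Your proof is correct. The paper actually states this theorem without supplying a proof, so there is nothing to compare against; your argument is exactly the natural direct verification one would expect the authors had in mind, namely expanding the weighted conjugate transpose via Definition~\ref{wctd} and reducing the claim to the Hermitian identity~\eqref{wmpie3} together with $\mc{M}^{H}=\mc{M}$.
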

Similarly, we have the following result.
\begin{theorem}
Let $\mc{A}\in {\C}^{I_{1}\times \cdots\times I_{M}\times J_{1}\times \cdots\times J_{N}}$, and $\mc{M}\in {\C}^{I_{1}\times \cdots\times I_{M}\times I_{1}\times \cdots\times I_{M}}$ and $\mc{N}\in {\C}^{J_{1}\times \cdots\times J_{N}\times J_{1}\times \cdots\times J_{N}}$ be Hermitian positive definite tensors. Then 
\begin{equation*}
    (\mc{A}^{\dg}_{\mc{M}\mc{N}}\m \mc{A})^{\#}_{\mc{N}\mc{N}}=\mc{A}^{\dg}_{\mc{M}\mc{N}}\m \mc{A}.
\end{equation*}
\end{theorem}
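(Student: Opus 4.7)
The plan is to unpack both sides by the definition of the weighted conjugate transpose and then reduce the claim to one of the defining equations of the weighted Moore--Penrose inverse, namely \eqref{wmpie4}. Because $\mc{A}^{\dg}_{\mc{M}\mc{N}}\m \mc{A}$ lives in $\mathbb{C}^{J_{1}\times\cdots\times J_{N}\times J_{1}\times\cdots\times J_{N}}$, the only tensors in sight are Hermitian positive definite $\mc{N}$ and its inverse, so the computation should be short.

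First I would apply Definition \ref{wctd} with $\mc{M}$ replaced by $\mc{N}$ and $\mc{N}$ replaced by $\mc{N}$, writing
\begin{equation*}
(\mc{A}^{\dg}_{\mc{M}\mc{N}}\m \mc{A})^{\#}_{\mc{N}\mc{N}}
= \mc{N}^{-1}\n (\mc{A}^{\dg}_{\mc{M}\mc{N}}\m \mc{A})^{H}\n \mc{N}.
\end{equation*}
Next I would left-multiply by $\mc{N}$ to reduce the identity to proving
\begin{equation*}
(\mc{A}^{\dg}_{\mc{M}\mc{N}}\m \mc{A})^{H}\n \mc{N} \;=\; \mc{N}\n \mc{A}^{\dg}_{\mc{M}\mc{N}}\m \mc{A}.
\end{equation*}

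Now the key step: since $\mc{N}^{H}=\mc{N}$ (as $\mc{N}$ is Hermitian positive definite), the reverse-order property of the ordinary conjugate transpose gives
\begin{equation*}
(\mc{N}\n \mc{A}^{\dg}_{\mc{M}\mc{N}}\m \mc{A})^{H}
= (\mc{A}^{\dg}_{\mc{M}\mc{N}}\m \mc{A})^{H}\n \mc{N}^{H}
= (\mc{A}^{\dg}_{\mc{M}\mc{N}}\m \mc{A})^{H}\n \mc{N}.
\end{equation*}
But \eqref{wmpie4} states exactly that $\mc{N}\n \mc{A}^{\dg}_{\mc{M}\mc{N}}\m \mc{A}$ is Hermitian, so its conjugate transpose equals itself, yielding the boxed equality above. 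Substituting back and cancelling $\mc{N}^{-1}\n \mc{N}=\mc{I}_{\mc{N}}$ finishes the proof.

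No real obstacle is expected; the only thing to double-check is the bookkeeping of the Einstein products, i.e.\ verifying that the contraction symbol on either side of $\mc{N}^{-1}$ and $\mc{N}$ is consistently $\n$ (both factors have order $2N$ with the matching block of $J$-dimensions), which is forced by the fact that $\mc{A}^{\dg}_{\mc{M}\mc{N}}\m \mc{A}$ is a square $N$th-order-block tensor indexed by $J_{1}\times\cdots\times J_{N}$ on each side. The argument also mirrors the previous theorem (about $(\mc{A}\n \mc{A}^{\dg}_{\mc{M}\mc{N}})^{\#}_{\mc{M}\mc{M}}$), which uses \eqref{wmpie3} in place of \eqref{wmpie4}.
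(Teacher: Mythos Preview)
Your proposal is correct and is precisely the natural direct computation: apply Definition~\ref{wctd}, use $\mc{N}^{H}=\mc{N}$, and invoke \eqref{wmpie4}. The paper itself gives no explicit proof for this theorem (it merely writes ``Similarly, we have the following result'' after the companion statement about $(\mc{A}\n\mc{A}^{\dg}_{\mc{M}\mc{N}})^{\#}_{\mc{M}\mc{M}}$), so your argument is exactly the intended one-line verification.
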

Next, we provide the right  cancellation property of $\mc{A}^{\#}_{\mc{M}\mc{N}}$ for a tensor $\mc{A}\in{\C}^{I_{1}\times \cdots\times I_{M}\times J_{1}\times \cdots\times J_{N}}$.
\begin{lemma}\label{rct1}
Let $\mc{A} \in {\C}^{I_{1}\times\cdots\times I_{M} \times J_{1}\times\cdots\times J_{N} }$, $\mc{B}\in {\C}^{K_{1}\times\cdots\times K_{L} \times I_{1}\times\cdots\times I_{M} }$ and $\mc{C} \in {\C}^{K_{1}\times\cdots\times K_{L} \times I_{1}\times\cdots\times I_{M} }$. Also, let $\mc{M}\in {\C}^{I_{1}\times \cdots\times I_{M}\times I_{1}\times \cdots\times I_{M}}$ and $\mc{N}\in {\C}^{J_{1}\times \cdots\times J_{N}\times J_{1}\times \cdots\times J_{N}}$ be Hermitian positive definite tensors.
If $\mc{B}\m\mc{A} \n\mc{A}^{\#}_{\mc{M}\mc{N}} = \mc{C}\m\mc{A}\n\mc{A}^{\#}_{\mc{M}\mc{N}}$, then $\mc{B}\n \mc{A} = \mc{C}\n \mc{A}$.

\end{lemma}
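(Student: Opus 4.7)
The plan is to reduce the hypothesis, in two steps, to an identity of Gram type $\mc{Y}\n\mc{Y}^{H}=\mc{O}$, and then invoke the earlier theorem asserting that $\mc{A}\n\mc{A}^{\#}_{\mc{M}\mc{N}}=\mc{O}$ forces $\mc{A}=\mc{O}$, taken with identity weight tensors (so that $\mc{Y}^{\#}_{\mc{I}\mc{I}}=\mc{Y}^{H}$) to conclude $\mc{Y}=\mc{O}$.

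First, I would subtract to get $\mc{D}\n\mc{A}^{\#}_{\mc{M}\mc{N}}=\mc{O}$, where $\mc{D}:=(\mc{B}-\mc{C})\m\mc{A}$. Unfolding $\mc{A}^{\#}_{\mc{M}\mc{N}}=\mc{N}^{-1}\n\mc{A}^{H}\m\mc{M}$ and cancelling the nonsingular tensor $\mc{M}$ on the right via $\m$ by $\mc{M}^{-1}$ gives
\[
\mc{D}\n\mc{N}^{-1}\n\mc{A}^{H}=\mc{O}.
\]
I would then post-multiply by $(\mc{B}-\mc{C})^{H}$ via $\m$, and use the reverse-order law for the conjugate transpose to rewrite $\mc{A}^{H}\m(\mc{B}-\mc{C})^{H}=((\mc{B}-\mc{C})\m\mc{A})^{H}=\mc{D}^{H}$. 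The display then becomes
\[
\mc{D}\n\mc{N}^{-1}\n\mc{D}^{H}=\mc{O}.
\]

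Next, I would exploit the Hermitian positive definiteness of $\mc{N}$ to factor $\mc{N}^{-1}=\mc{N}^{-1/2}\n\mc{N}^{-1/2}$, and set $\mc{Y}:=\mc{D}\n\mc{N}^{-1/2}$. Because $(\mc{N}^{-1/2})^{H}=\mc{N}^{-1/2}$, the reverse-order law yields $\mc{Y}^{H}=\mc{N}^{-1/2}\n\mc{D}^{H}$, so the previous display rewrites as $\mc{Y}\n\mc{Y}^{H}=\mc{O}$. The aforementioned theorem then forces $\mc{Y}=\mc{O}$, and a final post-multiplication by the nonsingular tensor $\mc{N}^{1/2}$ recovers $\mc{D}=\mc{O}$, i.e.\ the cancellation $\mc{B}\m\mc{A}=\mc{C}\m\mc{A}$.

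The main obstacle, and what makes the lemma non-trivial, is that $\mc{A}^{\#}_{\mc{M}\mc{N}}$ is only a weighted conjugate transpose rather than a genuine inverse, so the quick matrix trick ``$BAA^{\dagger}=CAA^{\dagger}\Rightarrow BA=BAA^{\dagger}A=CAA^{\dagger}A=CA$'' is unavailable. The extra leverage has to be drawn entirely from the positive definiteness of $\mc{N}$; that is precisely what permits the symmetric factorization $\mc{N}^{-1}=\mc{N}^{-1/2}\n\mc{N}^{-1/2}$ and the manufacture of a Gram-type tensor $\mc{Y}\n\mc{Y}^{H}$, whose vanishing alone is strong enough to annihilate $\mc{D}$.
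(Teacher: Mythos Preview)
The paper states this lemma without proof, so there is nothing to compare against; your argument is correct and is a natural way to establish the result. One small streamlining: after reaching $\mc{D}\n\mc{N}^{-1}\n\mc{D}^{H}=\mc{O}$ you need not pass through the square root and identity weights at all, since this equation is literally $\mc{D}\n\mc{D}^{\#}_{\mc{I}_{K}\mc{N}}=\mc{O}$ (take $\mc{P}=\mc{I}_{K}$ in the $K$-space and recall $\mc{D}^{\#}_{\mc{P}\mc{N}}=\mc{N}^{-1}\n\mc{D}^{H}\lp\mc{P}$), and the earlier theorem then gives $\mc{D}=\mc{O}$ directly. Your detour via $\mc{Y}=\mc{D}\n\mc{N}^{-1/2}$ is of course equivalent and perfectly fine.
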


Similarly, the left cancellation property is provided below.
\begin{lemma}\label{lct1}
Let $\mc{A} \in {\C}^{I_{1}\times\cdots\times I_{M} \times J_{1}\times\cdots\times J_{N} }$, $\mc{B}\in {\C}^{J_{1}\times\cdots\times J_{N} \times K_{1}\times\cdots\times K_{L} }$ and $\mc{C} \in {\C}^{J_{1}\times\cdots\times J_{N} \times K_{1}\times\cdots\times K_{L} }$. Also, let $\mc{M}\in {\C}^{I_{1}\times \cdots\times I_{M}\times I_{1}\times \cdots\times I_{M}}$ and $\mc{N}\in {\C}^{J_{1}\times \cdots\times J_{N}\times J_{1}\times \cdots\times J_{N}}$ be Hermitian positive definite tensors.
If $\mc{A}^{\#}_{\mc{M}\mc{N}}\m\mc{A}\n\mc{B} = \mc{A}^{\#}_{\mc{M}\mc{N}}\m\mc{A}\n\mc{C}$, then $\mc{A}\n \mc{B} = \mc{A}\n \mc{C}$.
\end{lemma}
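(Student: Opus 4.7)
The plan is to reduce the problem to a statement about a single tensor difference, rewrite the weighted conjugate transpose using its definition, and then use the square root of $\mc{M}$ to conclude via a positive-semidefinite argument. Set $\mc{D} = \mc{B} - \mc{C} \in {\C}^{J_{1}\times\cdots\times J_{N} \times K_{1}\times\cdots\times K_{L}}$; by linearity of the Einstein product it suffices to show that $\mc{A}^{\#}_{\mc{M}\mc{N}} \m \mc{A} \n \mc{D} = \mc{O}$ implies $\mc{A} \n \mc{D} = \mc{O}$.

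Next I would unfold the hypothesis using Definition \ref{wctd}, writing $\mc{A}^{\#}_{\mc{M}\mc{N}} = \mc{N}^{-1}\n \mc{A}^{H}\m\mc{M}$. The hypothesis becomes $\mc{N}^{-1}\n \mc{A}^{H}\m\mc{M}\m\mc{A}\n\mc{D} = \mc{O}$. Since $\mc{N}$ is Hermitian positive definite and hence invertible, I would multiply on the left by $\mc{N}$ (with respect to $\n$) to obtain the cleaner identity
\begin{equation*}
\mc{A}^{H}\m\mc{M}\m\mc{A}\n\mc{D} = \mc{O}.
\end{equation*}

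The heart of the argument is then to produce a square-zero expression out of this. Let $\mc{F} = \mc{M}^{1/2}\m\mc{A}\n\mc{D}$, which lies in ${\C}^{I_{1}\times\cdots\times I_{M}\times K_{1}\times\cdots\times K_{L}}$. Since $(\mc{M}^{1/2})^{H} = \mc{M}^{1/2}$ and $\mc{M}^{1/2}\m\mc{M}^{1/2} = \mc{M}$, pre-multiplying the displayed identity by $\mc{D}^{H}$ with respect to $\n$ gives
\begin{equation*}
\mc{F}^{H}\m\mc{F} \;=\; \mc{D}^{H}\n\mc{A}^{H}\m\mc{M}^{1/2}\m\mc{M}^{1/2}\m\mc{A}\n\mc{D} \;=\; \mc{D}^{H}\n\bigl(\mc{A}^{H}\m\mc{M}\m\mc{A}\n\mc{D}\bigr) \;=\; \mc{O}.
\end{equation*}

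Finally, I would use the standard fact that $\mc{F}^{H}\m\mc{F} = \mc{O}$ forces $\mc{F} = \mc{O}$: the diagonal entries of $\mc{F}^{H}\m\mc{F}$ are sums of squared moduli of the entries of $\mc{F}$, so all entries of $\mc{F}$ vanish. Hence $\mc{M}^{1/2}\m\mc{A}\n\mc{D} = \mc{O}$, and multiplying on the left by $\mc{M}^{-1/2}$ yields $\mc{A}\n\mc{D} = \mc{O}$, i.e. $\mc{A}\n\mc{B} = \mc{A}\n\mc{C}$. The only genuinely subtle step is the square-root trick; the main obstacle is really just bookkeeping the Einstein-product contractions so that each step is a valid operation on tensors of the stated orders, which mirrors exactly the dual manipulation used for the right cancellation property in Lemma \ref{rct1}.
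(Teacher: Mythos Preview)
Your argument is correct and is precisely the natural route: reduce to $\mc{A}^{H}\m\mc{M}\m\mc{A}\n\mc{D}=\mc{O}$, insert the square root $\mc{M}^{1/2}$, and conclude $\mc{F}^{H}\m\mc{F}=\mc{O}\Rightarrow\mc{F}=\mc{O}$. The paper does not spell out a proof of this lemma (it is stated as the dual of Lemma~\ref{rct1}, in the spirit of the zero-product result $\mc{A}^{\#}_{\mc{M}\mc{N}}\m\mc{A}=\mc{O}\Rightarrow\mc{A}=\mc{O}$), and your write-up is exactly the argument that ``similarly'' is pointing to.
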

A  sufficient condition for the commutativity of $\mc{A}^{\dg}_{\mc{M}\mc{N}}\m\mc{A}$ and $\mc{B}\lp\mc{B}^{\#}_{\mc{M}\mc{N}}$ is presented below.
\begin{lemma}\label{lm3.13}
Let $\mc{A} \in \mathbb{C}^{I_{1}\times\cdots\times I_{M} \times J_{1}\times\cdots\times J_{N}}$ and $\mc{B}\in \mathbb{C}^{J_{1}\times\cdots\times J_{N} \times K_{1}\times\cdots\times K_{L}}$. Also, let $\mc{M}\in {\C}^{I_{1}\times \cdots\times I_{M}\times I_{1}\times \cdots\times I_{M}}$, $\mc{N}\in {\C}^{J_{1}\times \cdots\times J_{N}\times J_{1}\times \cdots\times J_{N}}$ and $\mc{L}\in {\C}^{K_{1}\times \cdots\times K_{L}\times K_{1}\times \cdots\times K_{L}}$ be Hermitian positive definite tensors. If $\mc{A}^{\dg}_{\mc{M}\mc{N}} \m \mc{A} \n \mc{B} \lp \mc{B}^{\#}_{\mc{N}\mc{L}} \n \mc{A}^{\#}_{\mc{M}\mc{N}}=\mc{B}\lp \mc{B}^{\#}_{\mc{N}\mc{L}} \n\mc{A}^{\#}_{\mc{M}\mc{N}}$, then $\mc{A}^{\dg}_{\mc{M}\mc{N}}\m\mc{A}$ commutes with $\mc{B}\lp\mc{B}^{\#}_{\mc{N}\mc{L}}$.
\end{lemma}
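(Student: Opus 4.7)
The plan is to set $\mc{P} := \mc{A}^{\dg}_{\mc{M}\mc{N}}\m\mc{A}$ and $\mc{Q} := \mc{B}\lp\mc{B}^{\#}_{\mc{N}\mc{L}}$ and to prove the conclusion $\mc{P}\n\mc{Q} = \mc{Q}\n\mc{P}$. The engine of the argument is the observation that both $\mc{P}$ and $\mc{Q}$ are invariant under the weighted conjugation $(\cdot)^{\#}_{\mc{N}\mc{N}}$. For $\mc{P}$ this is exactly the theorem immediately preceding Lemma \ref{rct1}, which reads $(\mc{A}^{\dg}_{\mc{M}\mc{N}}\m\mc{A})^{\#}_{\mc{N}\mc{N}} = \mc{A}^{\dg}_{\mc{M}\mc{N}}\m\mc{A}$. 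For $\mc{Q}$ it follows from the reverse-order law for $(\cdot)^{\#}$ together with its involutory property, since
\begin{equation*}
\mc{Q}^{\#}_{\mc{N}\mc{N}} = (\mc{B}\lp\mc{B}^{\#}_{\mc{N}\mc{L}})^{\#}_{\mc{N}\mc{N}} = (\mc{B}^{\#}_{\mc{N}\mc{L}})^{\#}_{\mc{L}\mc{N}} \lp \mc{B}^{\#}_{\mc{N}\mc{L}} = \mc{B}\lp\mc{B}^{\#}_{\mc{N}\mc{L}} = \mc{Q}.
\end{equation*}

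Next I would apply $(\cdot)^{\#}_{\mc{N}\mc{M}}$ to both sides of the hypothesis $\mc{P}\n\mc{Q}\n\mc{A}^{\#}_{\mc{M}\mc{N}} = \mc{Q}\n\mc{A}^{\#}_{\mc{M}\mc{N}}$. Using the reverse-order law for the weighted conjugate transpose on the triple product (factor by factor), the self-adjointness identities $\mc{P}^{\#}_{\mc{N}\mc{N}} = \mc{P}$ and $\mc{Q}^{\#}_{\mc{N}\mc{N}} = \mc{Q}$ just established, and the involutory identity $(\mc{A}^{\#}_{\mc{M}\mc{N}})^{\#}_{\mc{N}\mc{M}} = \mc{A}$, the left and right sides collapse respectively to $\mc{A}\n\mc{Q}\n\mc{P}$ and $\mc{A}\n\mc{Q}$, delivering the new identity
\begin{equation*}
\mc{A}\n\mc{Q}\n\mc{P} = \mc{A}\n\mc{Q}.
\end{equation*}

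Multiplying this identity on the left by $\mc{A}^{\dg}_{\mc{M}\mc{N}}\m$ yields $\mc{P}\n\mc{Q}\n\mc{P} = \mc{P}\n\mc{Q}$. Applying $(\cdot)^{\#}_{\mc{N}\mc{N}}$ to this last equation, and again using $\mc{P}^{\#}_{\mc{N}\mc{N}} = \mc{P}$ and $\mc{Q}^{\#}_{\mc{N}\mc{N}} = \mc{Q}$ together with the reverse-order law for $(\cdot)^{\#}$, the left side is unchanged while the right side becomes $\mc{Q}\n\mc{P}$, so $\mc{P}\n\mc{Q}\n\mc{P} = \mc{Q}\n\mc{P}$. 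Chaining the two equations gives $\mc{P}\n\mc{Q} = \mc{Q}\n\mc{P}$, which is the claimed commutativity.

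The main bookkeeping obstacle will be the second step, where the reverse-order law for the weighted conjugate transpose is applied to a triple product whose three factors live in genuinely different index spaces with different weight pairs $(\mc{N},\mc{N})$, $(\mc{N},\mc{N})$, $(\mc{M},\mc{N})$; the point is that the $(\cdot)^{\#}_{\mc{N}\mc{N}}$-invariance of $\mc{P}$ and $\mc{Q}$ is exactly what ensures that nothing nontrivial survives beyond the swap of $\mc{A}^{\#}_{\mc{M}\mc{N}}$ back into $\mc{A}$, so that the hypothesis converts cleanly into its ``mirror'' form $\mc{A}\n\mc{Q}\n\mc{P} = \mc{A}\n\mc{Q}$ from which the proof is a two-line computation.
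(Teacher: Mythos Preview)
Your argument is correct. The key observations---that both $\mc{P}=\mc{A}^{\dg}_{\mc{M}\mc{N}}\m\mc{A}$ and $\mc{Q}=\mc{B}\lp\mc{B}^{\#}_{\mc{N}\mc{L}}$ satisfy $(\cdot)^{\#}_{\mc{N}\mc{N}}$-selfadjointness, and that applying $(\cdot)^{\#}_{\mc{N}\mc{M}}$ to the hypothesis mirrors it to $\mc{A}\n\mc{Q}\n\mc{P}=\mc{A}\n\mc{Q}$---are exactly right, and the remaining two-line deduction $\mc{P}\n\mc{Q}\n\mc{P}=\mc{P}\n\mc{Q}=\mc{Q}\n\mc{P}$ is clean.

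The paper itself does not supply a proof of this lemma; it is stated and then followed by ``Similarly, we can have the following result.'' Your proof is entirely in the spirit of the surrounding material: it uses only the reverse-order and involutory properties of the weighted conjugate transpose (Lemma~2.9 of \cite{bmm} as recalled here) together with the identity $(\mc{A}^{\dg}_{\mc{M}\mc{N}}\m\mc{A})^{\#}_{\mc{N}\mc{N}}=\mc{A}^{\dg}_{\mc{M}\mc{N}}\m\mc{A}$ established just before. One minor remark on presentation: in the second step you could alternatively post-multiply the hypothesis by $(\mc{A}^{\dg}_{\mc{M}\mc{N}})^{\#}_{\mc{N}\mc{M}}$ and invoke the right-cancellation Lemma~\ref{rct1}, which is perhaps what the authors had in mind given its placement immediately before this result; but your direct ``take $\#$ of both sides'' route is equivalent and arguably more transparent.
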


Similarly, we can have the following result.
\begin{lemma}
Let $\mc{A} \in \mathbb{C}^{I_{1}\times\cdots\times I_{M} \times J_{1}\times\cdots\times J_{N} }$ and $\mc{B}\in \mathbb{C}^{J_{1}\times\cdots\times J_{N} \times K_{1}\times\cdots\times K_{L} }$. Also, let $\mc{M}\in {\C}^{I_{1}\times \cdots\times I_{M}\times I_{1}\times \cdots\times I_{M}}$, $\mc{N}\in {\C}^{J_{1}\times \cdots\times J_{N}\times J_{1}\times \cdots\times J_{N}}$ and $\mc{L}\in {\C}^{K_{1}\times \cdots\times K_{L}\times K_{1}\times \cdots\times K_{L}}$ be Hermitian positive definite tensors. If $\mc{B}\lp\mc{B}^{\dg}_{\mc{N}\mc{L}}\n\mc{A}^{\#}_{\mc{M}\mc{N}}\m\mc{A}\n\mc{B}=\mc{A}^{\#}_{\mc{M}\mc{N}}\m\mc{A}\n\mc{B}$, then $\mc{B}\lp\mc{B}^{\dg}_{\mc{N}\mc{L}}$ commutes with $\mc{A}^{\#}_{\mc{M}\mc{N}}\m\mc{A}$.
\end{lemma}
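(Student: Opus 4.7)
The plan is to exploit the fact that both $\mc{B}\lp\mc{B}^{\dg}_{\mc{N}\mc{L}}$ and $\mc{A}^{\#}_{\mc{M}\mc{N}}\m\mc{A}$ are self-adjoint with respect to the weighted conjugate transpose $(\,\cdot\,)^{\#}_{\mc{N}\mc{N}}$; the commutativity will then fall out of a short sandwich-plus-adjoint argument.

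First I would establish the two identities $(\mc{B}\lp\mc{B}^{\dg}_{\mc{N}\mc{L}})^{\#}_{\mc{N}\mc{N}}=\mc{B}\lp\mc{B}^{\dg}_{\mc{N}\mc{L}}$ and $(\mc{A}^{\#}_{\mc{M}\mc{N}}\m\mc{A})^{\#}_{\mc{N}\mc{N}}=\mc{A}^{\#}_{\mc{M}\mc{N}}\m\mc{A}$. The second is almost immediate from the reverse-order rule for the weighted conjugate transpose together with the involutivity $(\mc{A}^{\#}_{\mc{M}\mc{N}})^{\#}_{\mc{N}\mc{M}}=\mc{A}$, both recorded earlier in this section. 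The first unfolds the definition $(\,\cdot\,)^{\#}_{\mc{N}\mc{N}}=\mc{N}^{-1}\n(\,\cdot\,)^{H}\n\mc{N}$ and uses Hermitianness of $\mc{N}$ together with the third defining equation of the weighted Moore--Penrose inverse, $(\mc{N}\n\mc{B}\lp\mc{B}^{\dg}_{\mc{N}\mc{L}})^{H}=\mc{N}\n\mc{B}\lp\mc{B}^{\dg}_{\mc{N}\mc{L}}$.

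Writing $\mc{P}=\mc{B}\lp\mc{B}^{\dg}_{\mc{N}\mc{L}}$ and $\mc{Q}=\mc{A}^{\#}_{\mc{M}\mc{N}}\m\mc{A}$ for brevity, I would then multiply the hypothesis $\mc{P}\n\mc{Q}\n\mc{B}=\mc{Q}\n\mc{B}$ on the right by $\mc{B}^{\dg}_{\mc{N}\mc{L}}$ via $\lp$, producing the symmetric sandwich $\mc{P}\n\mc{Q}\n\mc{P}=\mc{Q}\n\mc{P}$. Applying $(\,\cdot\,)^{\#}_{\mc{N}\mc{N}}$ to both sides and using the reverse-order law together with the self-adjointness of $\mc{P}$ and $\mc{Q}$ gives
$$(\mc{P}\n\mc{Q}\n\mc{P})^{\#}_{\mc{N}\mc{N}}=\mc{P}^{\#}_{\mc{N}\mc{N}}\n\mc{Q}^{\#}_{\mc{N}\mc{N}}\n\mc{P}^{\#}_{\mc{N}\mc{N}}=\mc{P}\n\mc{Q}\n\mc{P},$$
so the right-hand side $\mc{Q}\n\mc{P}$ is also $\#_{\mc{N}\mc{N}}$-self adjoint. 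But the reverse-order rule also yields $(\mc{Q}\n\mc{P})^{\#}_{\mc{N}\mc{N}}=\mc{P}^{\#}_{\mc{N}\mc{N}}\n\mc{Q}^{\#}_{\mc{N}\mc{N}}=\mc{P}\n\mc{Q}$. Equating this with $\mc{Q}\n\mc{P}$ delivers $\mc{P}\n\mc{Q}=\mc{Q}\n\mc{P}$, which is the claimed commutativity.

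The main thing to be careful about will be the bookkeeping of the weight subscripts on each $\#$-transpose and the identification of which Einstein product ($\n$, $\m$, or $\lp$) contracts which block of indices; once that accounting is correct the argument reduces to the short computation above and requires nothing beyond the reverse-order law for $\#$ and the third weighted Moore--Penrose equation for $\mc{B}$.
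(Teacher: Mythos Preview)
Your argument is correct: the self-adjointness of $\mc{P}=\mc{B}\lp\mc{B}^{\dg}_{\mc{N}\mc{L}}$ and $\mc{Q}=\mc{A}^{\#}_{\mc{M}\mc{N}}\m\mc{A}$ with respect to $(\,\cdot\,)^{\#}_{\mc{N}\mc{N}}$ is exactly what is needed, and the sandwich-plus-adjoint computation goes through as you describe. The paper does not actually write out a proof for this lemma; it merely records ``Similarly, we can have the following result'' after Lemma~\ref{lm3.13}, so your approach is precisely the dual of what the paper has in mind and there is nothing substantive to compare.
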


Equivalent conditions for the commutativity of $\mc{A}^{\dg}_{\mc{M}\mc{N}}\m\mc{A}$ and $\mc{B}\lp\mc{B}^{\dg}_{\mc{N}\mc{L}}$ are obtained next.
\begin{lemma}
Let $\mc{A} \in \mathbb{C}^{I_{1}\times\cdots\times I_{M} \times J_{1}\times\cdots\times J_{N} }$ and $\mc{B}\in \mathbb{C}^{J_{1}\times\cdots\times J_{N} \times K_{1}\times\cdots\times K_{L} }$. Also, let $\mc{M}\in {\C}^{I_{1}\times \cdots\times I_{M}\times I_{1}\times \cdots\times I_{M}}$, $\mc{N}\in {\C}^{J_{1}\times \cdots\times J_{N}\times J_{1}\times \cdots\times J_{N}}$ and $\mc{L}\in {\C}^{K_{1}\times \cdots\times K_{L}\times K_{1}\times \cdots\times K_{L}}$ be Hermitian positive definite tensors. Then the commutativity of $\mc{A}^{\dg}_{\mc{M}\mc{N}}\m\mc{A}$ and $\mc{B}\lp\mc{B}^{\dg}_{\mc{N}\mc{L}}$ is equivalent to either of the conditions
\begin{equation}\label{eq9}
\mc{A}^{\dg}_{\mc{M}\mc{N}} \m\mc{A} \n\mc{B} \lp\mc{B}^{\dg}_{\mc{N}\mc{L}}\n\mc{A}^{\#}_{\mc{M}\mc{N}}=\mc{B}\lp\mc{B}^{\dg}_{\mc{N}\mc{L}}\n \mc{A}^{\#}_{\mc{M}\mc{N}}
\end{equation}
and
\begin{equation}\label{eq10}
\mc{B} \lp\mc{B}^{\dg}_{\mc{N}\mc{L}} \n\mc{A}^{\dg}_{\mc{M}\mc{N}} \m\mc{A} \n\mc{B} =\mc{A}^{\dg}_{\mc{M}\mc{N}}\m\mc{A}\n \mc{B}.
\end{equation}

\end{lemma}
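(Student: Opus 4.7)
Set $P := \mc{A}^{\dg}_{\mc{M}\mc{N}}\m\mc{A}$ and $Q := \mc{B}\lp\mc{B}^{\dg}_{\mc{N}\mc{L}}$; both are idempotent tensors in $\mathbb{C}^{J_{1}\times\cdots\times J_{N}\times J_{1}\times\cdots\times J_{N}}$. The plan is to reduce both biconditionals to manipulations that hinge on three ingredients: (i) $\mc{A}\n P=\mc{A}$ and $Q\n\mc{B}=\mc{B}$ from equation (\ref{wmpie1}); (ii) $P^{\#}_{\mc{N}\mc{N}}=P$ and $Q^{\#}_{\mc{N}\mc{N}}=Q$, so that both $P$ and $Q$ are self-adjoint for the $\mc{N}$-weighted conjugate transpose (these follow from Theorem 3.7 applied to $\mc{A}$ and Theorem 3.6 applied to $\mc{B}$); and (iii) the auxiliary identity $P\n\mc{A}^{\#}_{\mc{M}\mc{N}}=\mc{A}^{\#}_{\mc{M}\mc{N}}$, obtained by expanding $\mc{A}^{\#}_{\mc{M}\mc{N}}=\mc{N}^{-1}\n\mc{A}^{H}\m\mc{M}$ and using the Hermitianity of $\mc{M}\m\mc{A}\n\mc{A}^{\dg}_{\mc{M}\mc{N}}$ together with $\mc{A}\n\mc{A}^{\dg}_{\mc{M}\mc{N}}\m\mc{A}=\mc{A}$.

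The forward directions are then immediate: if $P\n Q=Q\n P$, then $P\n Q\n\mc{A}^{\#}_{\mc{M}\mc{N}}=Q\n P\n\mc{A}^{\#}_{\mc{M}\mc{N}}=Q\n\mc{A}^{\#}_{\mc{M}\mc{N}}$ by (iii), which is (\ref{eq9}); and (\ref{eq10}) follows analogously from $Q\n\mc{B}=\mc{B}$.

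For the converse from (\ref{eq9}), I rewrite the hypothesis as $(P\n Q-Q)\n\mc{A}^{\#}_{\mc{M}\mc{N}}=\mc{O}$, substitute the definition of $\mc{A}^{\#}_{\mc{M}\mc{N}}$, and right-cancel $\mc{M}$ to obtain $(P\n Q-Q)\n\mc{N}^{-1}\n\mc{A}^{H}=\mc{O}$. Taking the ordinary conjugate transpose and using the consequences $P^{H}=\mc{N}\n P\n\mc{N}^{-1}$ and $Q^{H}=\mc{N}\n Q\n\mc{N}^{-1}$ of (ii), followed by right-cancellation of $\mc{N}^{-1}$, yields $\mc{A}\n(Q\n P-Q)=\mc{O}$. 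Left-multiplying by $\mc{A}^{\dg}_{\mc{M}\mc{N}}$ produces $P\n Q\n P=P\n Q$. Applying $(\cdot)^{\#}_{\mc{N}\mc{N}}$ to both sides, invoking the reverse-order law for the weighted conjugate transpose together with (ii), converts this into $P\n Q\n P=Q\n P$, and comparison of the two identities gives $P\n Q=Q\n P$.

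The converse from (\ref{eq10}) is cleaner: rewrite it as $(Q\n P-P)\n\mc{B}=\mc{O}$, right-multiply by $\mc{B}^{\dg}_{\mc{N}\mc{L}}$ using $\lp$ to get $Q\n P\n Q=P\n Q$, and apply $(\cdot)^{\#}_{\mc{N}\mc{N}}$ exactly as above to arrive at $Q\n P\n Q=Q\n P$, whence $P\n Q=Q\n P$. The trickiest step in the whole argument is the converse from (\ref{eq9}): the asymmetric placement of $\mc{A}^{\#}_{\mc{M}\mc{N}}$, carrying $\mc{N}^{-1}$ on the left and $\mc{M}$ on the right, forbids a direct right-cancellation, and the detour through the ordinary conjugate transpose is precisely what makes the final ``weighted adjoint swap'' $P\n Q\n P=P\n Q\Rightarrow P\n Q\n P=Q\n P$ available to close the argument.
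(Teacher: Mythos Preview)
Your argument is correct, and the paper states this lemma without proof, so there is no original argument to compare against. Two minor remarks. First, the cross-references are slightly off: the identity $P^{\#}_{\mc{N}\mc{N}}=P$ is the paper's Theorem~3.6, while $Q^{\#}_{\mc{N}\mc{N}}=Q$ is Theorem~3.5 applied to $\mc{B}$ with weights $(\mc{N},\mc{L})$; and the justification you sketch for (iii) should really use the Hermitianity of $\mc{N}\n\mc{A}^{\dg}_{\mc{M}\mc{N}}\m\mc{A}$ rather than of $\mc{M}\m\mc{A}\n\mc{A}^{\dg}_{\mc{M}\mc{N}}$. In fact (iii) drops out in one line by applying $(\cdot)^{\#}_{\mc{M}\mc{N}}$ to $\mc{A}\n P=\mc{A}$ via Lemma~3.3 and (ii). Second, the same shortcut simplifies the step you call trickiest: applying $(\cdot)^{\#}_{\mc{N}\mc{M}}$ directly to (\ref{eq9}) and using $(\mc{A}^{\#}_{\mc{M}\mc{N}})^{\#}_{\mc{N}\mc{M}}=\mc{A}$ together with (ii) yields $\mc{A}\n Q\n P=\mc{A}\n Q$ immediately, bypassing the detour through the ordinary conjugate transpose. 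None of this affects validity; your route works as written.
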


The next result provides an absorbing property of a tensor which coincides with its weighted conjugate transpose.
\begin{lemma}\label{ir}
Let  $\mc{P}\in {\C}^{I_{1}\times\cdots\times I_{N}\times I_{1}\times \cdots \times I_{N}}$ such that $\mc{N}\n\mc{P}=(\mc{N}\n\mc{P})^{H}$. Also, let $\mc{M}$ and $\mc{N}$ be Hermitian positive definite tensors of appropriate size. If $\mc{P}\n\mc{Q}=\mc{Q}$ for $\mc{Q}\in {\C}^{I_{1}\times\cdots\times I_{N}\times J_{1}\times \cdots \times J_{M}}$, then 
\begin{equation*}\label{ir1}
   \mc{Q}^{\dg}_{\mc{M}\mc{N}}\n\mc{P}=\mc{Q}^{\dg}_{\mc{M}\mc{N}}. 
\end{equation*}
\end{lemma}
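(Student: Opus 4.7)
The plan is to reduce the desired identity $\mc{Q}^{\dg}_{\mc{M}\mc{N}}\n\mc{P}=\mc{Q}^{\dg}_{\mc{M}\mc{N}}$ to the intermediate statement $\mc{Q}\m\mc{Q}^{\dg}_{\mc{M}\mc{N}}\n\mc{P}=\mc{Q}\m\mc{Q}^{\dg}_{\mc{M}\mc{N}}$, and then pre-multiply by $\mc{Q}^{\dg}_{\mc{M}\mc{N}}$ and invoke the second defining equation \eqref{wmpie2}. Morally, this is the tensor analogue of the matrix fact that if $P$ acts as the identity on the range of $A$, then the range projector $AA^{\dg}$ absorbs $P$ on the right; the weight $\mc{M}$ only enters because the relevant projector is $\mc{M}$-self-adjoint rather than ordinary self-adjoint.

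To prove the intermediate statement, the natural object to inspect is the Hermitian tensor $\mc{A}:=\mc{M}\n\mc{Q}\m\mc{Q}^{\dg}_{\mc{M}\mc{N}}$, whose Hermiticity is exactly \eqref{wmpie3}. First I would compute $(\mc{A}\n\mc{P})^{H}$: by the reverse-order law for the conjugate transpose and $\mc{A}^{H}=\mc{A}$, this equals $\mc{P}^{H}\n\mc{A}=\mc{P}^{H}\n\mc{M}\n\mc{Q}\m\mc{Q}^{\dg}_{\mc{M}\mc{N}}$. The hypothesis that $\mc{M}\n\mc{P}$ is Hermitian, together with $\mc{M}=\mc{M}^{H}$, forces $\mc{P}^{H}\n\mc{M}=\mc{M}\n\mc{P}$; a further application of $\mc{P}\n\mc{Q}=\mc{Q}$ then collapses $\mc{M}\n\mc{P}\n\mc{Q}\m\mc{Q}^{\dg}_{\mc{M}\mc{N}}$ back to $\mc{A}$. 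Hence $(\mc{A}\n\mc{P})^{H}=\mc{A}$, and one more $H$-application (using $\mc{A}=\mc{A}^{H}$) gives $\mc{A}\n\mc{P}=\mc{A}$. Since $\mc{M}$ is Hermitian positive definite, it is invertible, so pre-multiplying by $\mc{M}^{-1}\n$ cancels the leading $\mc{M}$ and yields the intermediate identity.

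The final step pre-multiplies $\mc{Q}\m\mc{Q}^{\dg}_{\mc{M}\mc{N}}\n\mc{P}=\mc{Q}\m\mc{Q}^{\dg}_{\mc{M}\mc{N}}$ by $\mc{Q}^{\dg}_{\mc{M}\mc{N}}\n$ on the left; the defining equation \eqref{wmpie2} simplifies both sides and produces the claim. The principal obstacle is bookkeeping: the tensor $\mc{Q}$ has $N$ left indices and $M$ right indices, so relative to the template in Definition \ref{wmpi} the roles of $\n$ and $\m$ are swapped, and the stated weighted Hermiticity condition must be read with the \emph{left} weight (i.e., as $\mc{M}\n\mc{P}=(\mc{M}\n\mc{P})^{H}$), since that is the only dimensionally compatible interpretation given that $\mc{P}$ lives in the left-index space of $\mc{Q}$. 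Once these identifications are made the argument is a short adjoint computation driven entirely by \eqref{wmpie2}, \eqref{wmpie3}, the two hypotheses on $\mc{P}$, and invertibility of $\mc{M}$.
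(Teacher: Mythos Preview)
Your argument is correct and is the natural route: use the $\mc{M}$-Hermiticity of $\mc{Q}\m\mc{Q}^{\dg}_{\mc{M}\mc{N}}$ from \eqref{wmpie3} together with the hypothesis on $\mc{P}$ to obtain $\mc{Q}\m\mc{Q}^{\dg}_{\mc{M}\mc{N}}\n\mc{P}=\mc{Q}\m\mc{Q}^{\dg}_{\mc{M}\mc{N}}$, then left-multiply by $\mc{Q}^{\dg}_{\mc{M}\mc{N}}$ and invoke \eqref{wmpie2}. The paper states this lemma without supplying a proof, so there is no alternative argument to compare against; your approach is exactly the one the surrounding machinery suggests, and your handling of the index bookkeeping (reading the weighted-Hermiticity hypothesis with the left weight, which is the only dimensionally consistent interpretation given $\mc{P}$ lives in the $I$-index space of $\mc{Q}$) is appropriate.
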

\noindent In a similar way, we have the following result.
\begin{lemma}\label{irt2}
Let  $\mc{P}\in {\C}^{I_{1}\times\cdots\times I_{N}\times I_{1}\times \cdots \times I_{N}}$ such that $\mc{N}\n\mc{P}=(\mc{N}\n\mc{P})^{H}$. Also, let $\mc{M}$ and $\mc{N}$ be Hermitian positive definite tensors of appropriate size. If $\mc{Q}\n\mc{P}=\mc{Q}$ for $\mc{Q}\in {\C}^{J_{1}\times \cdots \times J_{M}\times I_{1}\times\cdots\times I_{N}}$, then
\begin{equation*}\label{ir2}
   \mc{P}\n\mc{Q}^{\dg}_{\mc{M}\mc{N}}=\mc{Q}^{\dg}_{\mc{M}\mc{N}}.
\end{equation*}
\end{lemma}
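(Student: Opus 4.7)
The plan is to mirror the reasoning of Lemma~\ref{ir} in this right-handed setting by introducing the auxiliary tensor $\mc{Z}:=\mc{Q}^{\dg}_{\mc{M}\mc{N}}\m\mc{Q}\in {\C}^{I_{1}\times\cdots\times I_{N}\times I_{1}\times\cdots\times I_{N}}$, which lives in the same square slot as $\mc{P}$. By the fourth defining equation~(\ref{wmpie4}) of the weighted Moore--Penrose inverse, the tensor $\mc{N}\n\mc{Z}$ is Hermitian, and this is the only Hermitian datum attached to $\mc{Q}^{\dg}_{\mc{M}\mc{N}}$ that touches the weight $\mc{N}$ appearing in the hypothesis on $\mc{P}$, so it is the natural object to bridge the two assumptions.

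First I would reduce the conclusion to the single identity $\mc{P}\n\mc{Z}=\mc{Z}$. Applying~(\ref{wmpie2}) yields
\begin{equation*}
\mc{P}\n\mc{Q}^{\dg}_{\mc{M}\mc{N}}
=\mc{P}\n\bigl(\mc{Q}^{\dg}_{\mc{M}\mc{N}}\m\mc{Q}\n\mc{Q}^{\dg}_{\mc{M}\mc{N}}\bigr)
=(\mc{P}\n\mc{Z})\n\mc{Q}^{\dg}_{\mc{M}\mc{N}},
\end{equation*}
so once $\mc{P}\n\mc{Z}=\mc{Z}$ is known the right-hand side collapses to $\mc{Z}\n\mc{Q}^{\dg}_{\mc{M}\mc{N}}=\mc{Q}^{\dg}_{\mc{M}\mc{N}}\m\mc{Q}\n\mc{Q}^{\dg}_{\mc{M}\mc{N}}=\mc{Q}^{\dg}_{\mc{M}\mc{N}}$ by~(\ref{wmpie2}) again. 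Thus the real content is to produce $\mc{P}\n\mc{Z}=\mc{Z}$.

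The core step is to derive this from the two hypotheses. Left-multiplying $\mc{Q}\n\mc{P}=\mc{Q}$ by $\mc{Q}^{\dg}_{\mc{M}\mc{N}}\m$ yields the one-sided relation $\mc{Z}\n\mc{P}=\mc{Z}$, and pre-multiplying by $\mc{N}$ turns this into $\mc{N}\n\mc{Z}\n\mc{P}=\mc{N}\n\mc{Z}$. Taking Hermitian adjoints of both sides and successively invoking the three Hermitian identities $\mc{N}=\mc{N}^{H}$, $\mc{N}\n\mc{Z}=(\mc{N}\n\mc{Z})^{H}$, and $\mc{N}\n\mc{P}=(\mc{N}\n\mc{P})^{H}$ rewrites the left-hand side as $\mc{N}\n\mc{P}\n\mc{Z}$ while the right-hand side remains $\mc{N}\n\mc{Z}$; left-multiplying by $\mc{N}^{-1}$, which exists because $\mc{N}$ is HPD, then gives $\mc{P}\n\mc{Z}=\mc{Z}$. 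The main obstacle is precisely this weight-symmetrization manoeuvre, which flips a right-sided cancellation into its left-sided mirror using only Hermiticity of the weighted products; it is the same trick underlying Lemma~\ref{ir}, reflected across the Einstein product, and demands careful book-keeping of which factor absorbs which Hermitian identity when the adjoint of the triple product $\mc{N}\n\mc{Z}\n\mc{P}$ is expanded. Everything else, namely the reduction to $\mc{P}\n\mc{Z}=\mc{Z}$ and the final left-cancellation by $\mc{N}^{-1}$, is routine.
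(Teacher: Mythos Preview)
Your argument is correct: the reduction to $\mc{P}\n\mc{Z}=\mc{Z}$ via~(\ref{wmpie2}) and the subsequent weight-symmetrization using $(\mc{N}\n\mc{Z})^{H}=\mc{N}\n\mc{Z}$ and $(\mc{N}\n\mc{P})^{H}=\mc{N}\n\mc{P}$ go through exactly as you describe. The paper itself supplies no explicit proof of this lemma, merely noting that it follows ``in a similar way'' to Lemma~\ref{ir}; your write-up is precisely the natural mirrored version of that argument.
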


\section{Main results}
\subsection{Reverse-order law for square tensors}
In this subsection, we provide some sufficient conditions of the reverse-order law for the weighted Moore--Penrose inverse of the Einstein product of two square tensors.
\begin{theorem}\label{rolsct1}
Let $\mc{A}$, $\mc{B} \in {\C}^{I_{1}\times \cdots \times I_{N}\times I_{1}\times \cdots \times I_{N}}$. Also, let $\mc{M},~\mc{N}\in{\C}^{I_{1}\times \cdots \times I_{N}\times I_{1}\times \cdots \times I_{N}}$ be Hermitian positive definite tensors. If 
\begin{eqnarray}
\mc{A}\n(\mc{B}\n\mc{B}^{\dg}_{\mc{M}\mc{N}})&=&(\mc{B}\n\mc{B}^{\dg}_{\mc{M}\mc{N}})\n\mc{A},\label{rolsc1}\\
\mc{A}^{\dg}_{\mc{M}\mc{N}}\n(\mc{B}\n\mc{B}^{\dg}_{\mc{M}\mc{N}})&=&(\mc{B}\n\mc{B}^{\dg}_{\mc{M}\mc{N}})\n\mc{A}^{\dg}_{\mc{M}\mc{N}},\label{rolsc2}\\
\mc{B}\n(\mc{A}^{\dg}_{\mc{M}\mc{N}}\n\mc{A})&=&(\mc{A}^{\dg}_{\mc{M}\mc{N}}\n\mc{A})\n\mc{B},\label{rolsc3}\\
\mc{B}^{\dg}_{\mc{M}\mc{N}}\n(\mc{A}^{\dg}_{\mc{M}\mc{N}}\n\mc{A})&=&(\mc{A}^{\dg}_{\mc{M}\mc{N}}\n\mc{A})\n\mc{B}^{\dg}_{\mc{M}\mc{N}},\label{rolsc4}
\end{eqnarray}
then $(\mc{A}\n\mc{B})^{\dg}_{\mc{M}\mc{N}}=\mc{B}^{\dg}_{\mc{M}\mc{N}}\n\mc{A}^{\dg}_{\mc{M}\mc{N}}$.
\end{theorem}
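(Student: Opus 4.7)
The plan is to show that the candidate tensor $\mc{X} := \mc{B}^{\dg}_{\mc{M}\mc{N}}\n\mc{A}^{\dg}_{\mc{M}\mc{N}}$ satisfies the four defining equations (\ref{wmpie1})--(\ref{wmpie4}) of the weighted Moore--Penrose inverse of $\mc{A}\n\mc{B}$; by uniqueness we then conclude $(\mc{A}\n\mc{B})^{\dg}_{\mc{M}\mc{N}} = \mc{X}$. Since all tensors live in the same even-order square space, only the product $\n$ appears, which keeps associativity manipulations clean.

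For the first two axioms, I would compute
\[
(\mc{A}\n\mc{B})\n\mc{X}\n(\mc{A}\n\mc{B}) = \mc{A}\n(\mc{B}\n\mc{B}^{\dg}_{\mc{M}\mc{N}})\n\mc{A}^{\dg}_{\mc{M}\mc{N}}\n\mc{A}\n\mc{B},
\]
apply (\ref{rolsc1}) to move $\mc{A}$ to the front past $\mc{B}\n\mc{B}^{\dg}_{\mc{M}\mc{N}}$, collapse $\mc{A}\n\mc{A}^{\dg}_{\mc{M}\mc{N}}\n\mc{A}=\mc{A}$ (from (\ref{wmpie1}) for $\mc{A}$), then apply (\ref{rolsc1}) once more in reverse to recover $\mc{A}\n\mc{B}$. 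For the idempotency equation $\mc{X}\n(\mc{A}\n\mc{B})\n\mc{X}=\mc{X}$, I would use (\ref{rolsc3}) to swap $\mc{B}$ past $\mc{A}^{\dg}_{\mc{M}\mc{N}}\n\mc{A}$ on the inside and then (\ref{rolsc4}) to swap $\mc{B}^{\dg}_{\mc{M}\mc{N}}$ past $\mc{A}^{\dg}_{\mc{M}\mc{N}}\n\mc{A}$, so that consecutive $\mc{B}^{\dg}_{\mc{M}\mc{N}}\n\mc{B}\n\mc{B}^{\dg}_{\mc{M}\mc{N}}$ and $\mc{A}\n\mc{A}^{\dg}_{\mc{M}\mc{N}}$ factors can be collapsed via the defining identities (\ref{wmpie2}) for $\mc{B}$ and the associated $\mc{A}$-identities.

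The harder part, and what I expect to be the main obstacle, is verifying the two Hermitian-symmetry axioms (\ref{wmpie3}) and (\ref{wmpie4}), because the weight tensors $\mc{M},\mc{N}$ do not in general commute with $\mc{A},\mc{B}$ or their weighted inverses, so one cannot just cite Hermiticity termwise. For (\ref{wmpie3}), I would form
\[
\mc{M}\n(\mc{A}\n\mc{B})\n\mc{X} = \mc{M}\n\mc{A}\n(\mc{B}\n\mc{B}^{\dg}_{\mc{M}\mc{N}})\n\mc{A}^{\dg}_{\mc{M}\mc{N}},
\]
use (\ref{rolsc1}) to rewrite this as $\mc{M}\n\mc{B}\n\mc{B}^{\dg}_{\mc{M}\mc{N}}\n\mc{A}\n\mc{A}^{\dg}_{\mc{M}\mc{N}}$, and then compute its conjugate transpose. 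The Hermiticity of $\mc{M}\n\mc{A}\n\mc{A}^{\dg}_{\mc{M}\mc{N}}$ and $\mc{M}\n\mc{B}\n\mc{B}^{\dg}_{\mc{M}\mc{N}}$ (from (\ref{wmpie3}) applied to $\mc{A}$ and $\mc{B}$) plus Hermiticity of $\mc{M}$ will produce $\mc{M}\n\mc{A}\n\mc{A}^{\dg}_{\mc{M}\mc{N}}\n\mc{B}\n\mc{B}^{\dg}_{\mc{M}\mc{N}}$ after cancellation of $\mc{M}^{-1}\n\mc{M}$; then applying (\ref{rolsc2}) followed by (\ref{rolsc1}) returns the original expression.

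The proof of (\ref{wmpie4}) is structurally the same on the other side: rewrite $\mc{N}\n\mc{X}\n(\mc{A}\n\mc{B})$ via (\ref{rolsc3}) as $\mc{N}\n\mc{B}^{\dg}_{\mc{M}\mc{N}}\n\mc{B}\n\mc{A}^{\dg}_{\mc{M}\mc{N}}\n\mc{A}$, take the conjugate transpose exploiting the Hermiticity of $\mc{N}\n\mc{A}^{\dg}_{\mc{M}\mc{N}}\n\mc{A}$ and $\mc{N}\n\mc{B}^{\dg}_{\mc{M}\mc{N}}\n\mc{B}$ (from (\ref{wmpie4}) for $\mc{A}$ and $\mc{B}$) together with $\mc{N}^H=\mc{N}$, and close the loop using (\ref{rolsc4}) and then (\ref{rolsc3}). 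Throughout, the only book-keeping required is careful insertion of $\mc{N}^{-1}\n\mc{N}$ (or $\mc{M}^{-1}\n\mc{M}$) between the two Hermitian blocks when transposing, which is the one place where a slip is easy to make.
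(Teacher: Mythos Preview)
Your proposal is correct and follows essentially the same route as the paper: set $\mc{Y}=\mc{B}^{\dg}_{\mc{M}\mc{N}}\n\mc{A}^{\dg}_{\mc{M}\mc{N}}$, verify the four defining equations of Definition~\ref{wmpi} directly, and for the two Hermitian-symmetry axioms insert $\mc{M}^{-1}\n\mc{M}$ (resp.\ $\mc{N}^{-1}\n\mc{N}$) so that the product splits into two Hermitian blocks before transposing. The only cosmetic difference is that the paper happens to use \eqref{rolsc3}--\eqref{rolsc4} for the first Penrose equation and \eqref{rolsc1}--\eqref{rolsc2} for the second (you swap these), and for each symmetry axiom the paper leads with the commutation involving the weighted inverse (\eqref{rolsc4}, then \eqref{rolsc3}) whereas you lead with the one involving the tensor itself; either order works, so this is not a substantive deviation.
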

\begin{proof}
Suppose that Equations \eqref{rolsc1}-\eqref{rolsc4} hold. Let $\mc{X}=\mc{A}\n\mc{B}$ and $\mc{Y}=\mc{B}^{\dg}_{\mc{M}\mc{N}}\n\mc{A}^{\dg}_{\mc{M}\mc{N}}$, then with the help of Equations \eqref{rolsc3} and \eqref{rolsc4}, we have $\mc{X}\n\mc{Y}\n\mc{X}=\mc{X}$ and using Equations \eqref{rolsc1} and \eqref{rolsc2}, we have $\mc{Y}\n\mc{X}\n\mc{Y}=\mc{Y}.$ Using Equation \eqref{rolsc1}, we have $(\mc{M}\n\mc{X}\n\mc{Y})^{H}=[(\mc{M}\n\mc{B}\n\mc{B}^{\dg}_{\mc{M}\mc{N}})\n\mc{M}^{-1}\n(\mc{M}\n\mc{A}\n\mc{A}^{\dg}_{\mc{M}\mc{N}})]^{H}$, which results $(\mc{M}\n\mc{X}\n\mc{Y})^{H}=\mc{M}\n\mc{X}\n\mc{Y}$ due to Equation \eqref{rolsc2}. Using Equation \eqref{rolsc4} we have $(\mc{N}\n\mc{Y}\n\mc{X})^{H}=[\mc{N}\n(\mc{A}^{\dg}_{\mc{M}\mc{N}}\n\mc{A})*_{N}\mc{N}^{-1} \n(\mc{N}\n\mc{B}^{\dg}_{\mc{M}\mc{N}}\n\mc{B})]^{H}$, which results $(\mc{N}\n\mc{Y}\n\mc{X})^{H}=\mc{N}\n\mc{Y}\n\mc{X}$ due to Equation \eqref{rolsc3}. 
Therefore, by Definition \ref{wmpi}, we get $\mc{X}^{\dg}_{\mc{M}\mc{N}}=\mc{Y}$, i.e., $(\mc{A}\n\mc{B})^{\dg}_{\mc{M}\mc{N}}=\mc{B}^{\dg}_{\mc{M}\mc{N}}\n\mc{A}^{\dg}_{\mc{M}\mc{N}}$.
\end{proof}
We replace the conditions \eqref{rolsc1} and \eqref{rolsc2} of Theorem \ref{rolsct1} by a single condition, and is presented next.
\begin{theorem}
Let $\mc{A}$, $\mc{B} \in {\C}^{I_{1}\times \cdots \times I_{N}\times I_{1}\times \cdots \times I_{N}}$. Also, let $\mc{M},~\mc{N}\in{\C}^{I_{1}\times \cdots \times I_{N}\times I_{1}\times \cdots \times I_{N}}$ be Hermitian positive definite tensors. If 
\begin{itemize}
    \item[(i)] $(\mc{M}\n\mc{A}\n\mc{B}\n\mc{B}^{\dg}_{\mc{M}\mc{N}}\n\mc{A}^{\dg}_{\mc{M}\mc{N}})^{H}=\mc{M}\n\mc{A}\n\mc{B}\n\mc{B}^{\dg}_{\mc{M}\mc{N}}\n\mc{A}^{\dg}_{\mc{M}\mc{N}}$,
    \item[(ii)] $\mc{B}\n(\mc{A}^{\dg}_{\mc{M}\mc{N}}\n\mc{A})=(\mc{A}^{\dg}_{\mc{M}\mc{N}}\n\mc{A})\n\mc{B}$,
    \item[(iii)] $\mc{B}^{\dg}_{\mc{M}\mc{N}}\n(\mc{A}^{\dg}_{\mc{M}\mc{N}}\n\mc{A})=(\mc{A}^{\dg}_{\mc{M}\mc{N}}\n\mc{A})\n\mc{B}^{\dg}_{\mc{M}\mc{N}}$,
\end{itemize}
then $(\mc{A}\n\mc{B})^{\dg}_{\mc{M}\mc{N}}=\mc{B}^{\dg}_{\mc{M}\mc{N}}\n\mc{A}^{\dg}_{\mc{M}\mc{N}}$.
\end{theorem}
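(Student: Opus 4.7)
My plan is to verify that $\mc{Y}:=\mc{B}^{\dg}_{\mc{M}\mc{N}}\n\mc{A}^{\dg}_{\mc{M}\mc{N}}$ satisfies all four defining conditions of the weighted Moore--Penrose inverse of $\mc{X}:=\mc{A}\n\mc{B}$ listed in Definition \ref{wmpi}; by uniqueness of that inverse, this will force $(\mc{A}\n\mc{B})^{\dg}_{\mc{M}\mc{N}}=\mc{Y}$. The scheme parallels that of Theorem \ref{rolsct1}, the new twist being that hypothesis (i) alone replaces the two commutativity conditions \eqref{rolsc1}--\eqref{rolsc2} which in the earlier proof jointly delivered the Hermiticity of $\mc{M}\n\mc{X}\n\mc{Y}$.

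The two ``consistency'' identities are straightforward. For $\mc{X}\n\mc{Y}\n\mc{X}=\mc{X}$, I would use (ii) to push $\mc{A}^{\dg}_{\mc{M}\mc{N}}\n\mc{A}$ past $\mc{B}$ so that the central factor $\mc{B}\n\mc{B}^{\dg}_{\mc{M}\mc{N}}\n\mc{B}=\mc{B}$ collapses, then apply (ii) once more together with $\mc{A}\n\mc{A}^{\dg}_{\mc{M}\mc{N}}\n\mc{A}=\mc{A}$. For $\mc{Y}\n\mc{X}\n\mc{Y}=\mc{Y}$ the symmetric argument uses (iii) twice, collapsing $\mc{B}^{\dg}_{\mc{M}\mc{N}}\n\mc{B}\n\mc{B}^{\dg}_{\mc{M}\mc{N}}=\mc{B}^{\dg}_{\mc{M}\mc{N}}$ and $\mc{A}^{\dg}_{\mc{M}\mc{N}}\n\mc{A}\n\mc{A}^{\dg}_{\mc{M}\mc{N}}=\mc{A}^{\dg}_{\mc{M}\mc{N}}$. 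Equation \eqref{wmpie3} is then immediate, since $\mc{M}\n\mc{X}\n\mc{Y}=\mc{M}\n\mc{A}\n\mc{B}\n\mc{B}^{\dg}_{\mc{M}\mc{N}}\n\mc{A}^{\dg}_{\mc{M}\mc{N}}$ is precisely the expression asserted Hermitian in (i).

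The main obstacle I foresee is the fourth equation, the Hermiticity of $\mc{N}\n\mc{Y}\n\mc{X}$: hypothesis (i) carries no information on the $\mc{N}$-side, so I must get there from (ii) and (iii) alone. My plan is to first use (iii) to rewrite
\[
\mc{N}\n\mc{Y}\n\mc{X}=\mc{N}\n\mc{A}^{\dg}_{\mc{M}\mc{N}}\n\mc{A}\n\mc{B}^{\dg}_{\mc{M}\mc{N}}\n\mc{B},
\]
and then insert $\mc{N}^{-1}\n\mc{N}$ in the middle to factor it as $(\mc{N}\n\mc{A}^{\dg}_{\mc{M}\mc{N}}\n\mc{A})\n\mc{N}^{-1}\n(\mc{N}\n\mc{B}^{\dg}_{\mc{M}\mc{N}}\n\mc{B})$. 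Both outer tensors are Hermitian by \eqref{wmpie4} applied to $\mc{A}$ and to $\mc{B}$ separately, and $\mc{N}^{-1}$ is Hermitian because $\mc{N}$ is. Conjugate transposition therefore only swaps the two outer factors, producing $\mc{N}\n\mc{B}^{\dg}_{\mc{M}\mc{N}}\n\mc{B}\n\mc{A}^{\dg}_{\mc{M}\mc{N}}\n\mc{A}$. A final application of (ii) to re-commute $\mc{B}\n\mc{A}^{\dg}_{\mc{M}\mc{N}}\n\mc{A}$ returns the expression to $\mc{N}\n\mc{Y}\n\mc{X}$, confirming \eqref{wmpie4} and completing the proof.
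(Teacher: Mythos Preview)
Your proof is correct and follows essentially the route the paper intends. The paper does not spell out a separate proof for this theorem; it merely remarks that conditions \eqref{rolsc1}--\eqref{rolsc2} of Theorem~\ref{rolsct1} are replaced by the single Hermiticity hypothesis (i), so the implicit argument is precisely the modification of the proof of Theorem~\ref{rolsct1} that you carry out: conditions (ii) and (iii) handle $\mc{X}\n\mc{Y}\n\mc{X}=\mc{X}$, $\mc{Y}\n\mc{X}\n\mc{Y}=\mc{Y}$, and the $\mc{N}$-Hermiticity (via the same $(\mc{N}\n\mc{A}^{\dg}_{\mc{M}\mc{N}}\n\mc{A})\n\mc{N}^{-1}\n(\mc{N}\n\mc{B}^{\dg}_{\mc{M}\mc{N}}\n\mc{B})$ factorisation the paper uses in Theorem~\ref{rolsct1}), while (i) gives the $\mc{M}$-Hermiticity directly.
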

Similarly, one can replace the conditions \eqref{rolsc3} and \eqref{rolsc4} of Theorem \ref{rolsct1} by a single condition as follows.
\begin{theorem}
Let $\mc{A}$, $\mc{B} \in {\C}^{I_{1}\times \cdots \times I_{N}\times I_{1}\times \cdots \times I_{N}}$. Also, let $\mc{M},~\mc{N}\in{\C}^{I_{1}\times \cdots \times I_{N}\times I_{1}\times \cdots \times I_{N}}$ be Hermitian positive definite tensors. If 
\begin{itemize}
    \item[(i)] $\mc{A}\n(\mc{B}\n\mc{B}^{\dg}_{\mc{M}\mc{N}})=(\mc{B}\n\mc{B}^{\dg}_{\mc{M}\mc{N}})\n\mc{A}$,
    \item[(ii)] $\mc{A}^{\dg}_{\mc{M}\mc{N}}\n(\mc{B}\n\mc{B}^{\dg}_{\mc{M}\mc{N}})=(\mc{B}\n\mc{B}^{\dg}_{\mc{M}\mc{N}})\n\mc{A}^{\dg}_{\mc{M}\mc{N}}$,
    \item[(iii)] $(\mc{N}\n\mc{B}^{\dg}_{\mc{M}\mc{N}}\n\mc{A}^{\dg}_{\mc{M}\mc{N}}\n\mc{A}\n\mc{B})^{H}=\mc{N}\n\mc{B}^{\dg}_{\mc{M}\mc{N}}\n\mc{A}^{\dg}_{\mc{M}\mc{N}}\n\mc{A}\n\mc{B}$,
\end{itemize}
then $(\mc{A}\n\mc{B})^{\dg}_{\mc{M}\mc{N}}=\mc{B}^{\dg}_{\mc{M}\mc{N}}\n\mc{A}^{\dg}_{\mc{M}\mc{N}}$.
\end{theorem}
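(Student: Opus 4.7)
The plan is to mimic the verification strategy used in Theorem~\ref{rolsct1}: set $\mc{X}=\mc{A}\n\mc{B}$ and $\mc{Y}=\mc{B}^{\dg}_{\mc{M}\mc{N}}\n\mc{A}^{\dg}_{\mc{M}\mc{N}}$, then show that the pair $(\mc{X},\mc{Y})$ satisfies the four defining equations \eqref{wmpie1}--\eqref{wmpie4} of Definition~\ref{wmpi}. By the uniqueness built into that definition, this will force $\mc{X}^{\dg}_{\mc{M}\mc{N}}=\mc{Y}$, which is exactly the desired reverse-order law.

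Equation \eqref{wmpie4} is literally the hypothesis (iii), so that identity is free. For \eqref{wmpie1}, I would compute
\[
\mc{X}\n\mc{Y}\n\mc{X}=\mc{A}\n(\mc{B}\n\mc{B}^{\dg}_{\mc{M}\mc{N}})\n\mc{A}^{\dg}_{\mc{M}\mc{N}}\n\mc{A}\n\mc{B},
\]
apply hypothesis (i) to move $\mc{A}$ past $\mc{B}\n\mc{B}^{\dg}_{\mc{M}\mc{N}}$, collapse the interior $\mc{A}\n\mc{A}^{\dg}_{\mc{M}\mc{N}}\n\mc{A}$ to $\mc{A}$ via \eqref{wmpie1} for $\mc{A}$, use (i) once more in the opposite direction, and finally collapse $\mc{B}\n\mc{B}^{\dg}_{\mc{M}\mc{N}}\n\mc{B}$ to $\mc{B}$ via \eqref{wmpie1} for $\mc{B}$. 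For \eqref{wmpie2}, the same manipulation as in Theorem~\ref{rolsct1} works: (i) moves $\mc{A}$ through $\mc{B}\n\mc{B}^{\dg}_{\mc{M}\mc{N}}$, (ii) then commutes $\mc{A}^{\dg}_{\mc{M}\mc{N}}$ with the same projector, and \eqref{wmpie2} applied to $\mc{A}$ and to $\mc{B}$ individually reduces the expression to $\mc{Y}$.

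The Hermitian equation \eqref{wmpie3} is the step that genuinely uses both (i) and (ii). Following the factorisation trick of Theorem~\ref{rolsct1}, hypothesis (i) lets me rewrite
\[
\mc{M}\n\mc{X}\n\mc{Y}=(\mc{M}\n\mc{B}\n\mc{B}^{\dg}_{\mc{M}\mc{N}})\n\mc{M}^{-1}\n(\mc{M}\n\mc{A}\n\mc{A}^{\dg}_{\mc{M}\mc{N}}),
\]
where each of the three factors is Hermitian: the outer ones by \eqref{wmpie3} applied to $\mc{A}$ and to $\mc{B}$ separately, and $\mc{M}^{-1}$ because $\mc{M}$ is Hermitian positive definite. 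Taking $(\,\cdot\,)^{H}$ simply reverses the order of these three factors, after which one appeal to hypothesis (ii) brings the product back to $\mc{M}\n\mc{X}\n\mc{Y}$.

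I do not foresee any serious obstacle. The novelty relative to Theorem~\ref{rolsct1} is only that the two commutation conditions \eqref{rolsc3}--\eqref{rolsc4} have been merged into the single Hermiticity hypothesis (iii), and this is precisely what is needed to discharge \eqref{wmpie4}. The mildly delicate step is \eqref{wmpie3}, where one must keep careful track of the weight $\mc{M}^{-1}$ and of the fact that $\mc{M}\n\mc{A}\n\mc{A}^{\dg}_{\mc{M}\mc{N}}$ and $\mc{M}\n\mc{B}\n\mc{B}^{\dg}_{\mc{M}\mc{N}}$ are Hermitian in the weighted sense supplied by \eqref{wmpie3}, not merely the plain conjugate-transpose sense; everything else amounts to bookkeeping with the four Moore--Penrose identities for $\mc{A}$ and $\mc{B}$ individually.
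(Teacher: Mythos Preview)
Your proposal is correct and follows exactly the approach the paper intends: it states this result without an explicit proof, indicating only that the argument of Theorem~\ref{rolsct1} carries over when conditions \eqref{rolsc3}--\eqref{rolsc4} are replaced by the single Hermiticity hypothesis (iii). Your verification of the four Penrose equations, using (i) alone for \eqref{wmpie1}, (i) and (ii) for \eqref{wmpie2}, the factorisation trick with (i) and (ii) for \eqref{wmpie3}, and (iii) directly for \eqref{wmpie4}, is precisely the intended computation.
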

The next result confirms that whenever the tensor $\mc{A}$ satisfies $\mc{M}\n\mc{A}^{\dg}_{\mc{M}\mc{N}}=(\mc{M}\n\mc{A})^{H}$, then \eqref{rolsc3} and \eqref{rolsc4} are sufficient to hold the reverse-order law.
\begin{theorem}
Let $\mc{A}$, $\mc{B} \in {\C}^{I_{1}\times \cdots \times I_{N}\times I_{1}\times \cdots \times I_{N}}$. Also, let $\mc{M},~\mc{N}\in{\C}^{I_{1}\times \cdots \times I_{N}\times I_{1}\times \cdots \times I_{N}}$ be Hermitian positive definite tensors and $\mc{M}\n\mc{A}^{\dg}_{\mc{M}\mc{N}}=(\mc{M}\n\mc{A})^{H}$. If 
\begin{eqnarray}
\mc{B}\n(\mc{A}^{\dg}_{\mc{M}\mc{N}}\n\mc{A})&=&(\mc{A}^{\dg}_{\mc{M}\mc{N}}\n\mc{A})\n\mc{B},\label{rolah1}\\
\mc{B}^{\dg}_{\mc{M}\mc{N}}\n(\mc{A}^{\dg}_{\mc{M}\mc{N}}\n\mc{A})&=&(\mc{A}^{\dg}_{\mc{M}\mc{N}}\n\mc{A})\n\mc{B}^{\dg}_{\mc{M}\mc{N}},\label{rolah2}
\end{eqnarray}
then $(\mc{A}\n\mc{B})^{\dg}_{\mc{M}\mc{N}}=\mc{B}^{\dg}_{\mc{M}\mc{N}}\n\mc{A}^{\dg}_{\mc{M}\mc{N}}$.
\end{theorem}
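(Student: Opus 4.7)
The plan is to verify, one by one, the four defining equations \eqref{wmpie1}--\eqref{wmpie4} of the weighted Moore--Penrose inverse for the tensor $\mc{X}=\mc{A}\n\mc{B}$ with the candidate $\mc{Y}=\mc{B}^{\dg}_{\mc{M}\mc{N}}\n\mc{A}^{\dg}_{\mc{M}\mc{N}}$. The two hypotheses \eqref{rolah1} and \eqref{rolah2} of this theorem coincide verbatim with hypotheses \eqref{rolsc3} and \eqref{rolsc4} of Theorem \ref{rolsct1}, so the very same manipulations already recorded in the proof of that theorem deliver both $\mc{X}\n\mc{Y}\n\mc{X}=\mc{X}$ and $(\mc{N}\n\mc{Y}\n\mc{X})^H=\mc{N}\n\mc{Y}\n\mc{X}$ at no extra cost. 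What remains is to establish $\mc{Y}\n\mc{X}\n\mc{Y}=\mc{Y}$ and $(\mc{M}\n\mc{X}\n\mc{Y})^H=\mc{M}\n\mc{X}\n\mc{Y}$ using \eqref{rolah1}, \eqref{rolah2}, and the extra assumption $\mc{M}\n\mc{A}^{\dg}_{\mc{M}\mc{N}}=(\mc{M}\n\mc{A})^H$ in place of the missing \eqref{rolsc1} and \eqref{rolsc2}.

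For $\mc{Y}\n\mc{X}\n\mc{Y}=\mc{Y}$ I would start from $\mc{B}^{\dg}_{\mc{M}\mc{N}}\n\mc{A}^{\dg}_{\mc{M}\mc{N}}\n\mc{A}\n\mc{B}\n\mc{B}^{\dg}_{\mc{M}\mc{N}}\n\mc{A}^{\dg}_{\mc{M}\mc{N}}$, push the block $\mc{A}^{\dg}_{\mc{M}\mc{N}}\n\mc{A}$ first to the left of $\mc{B}$ through \eqref{rolah1} and then to the right of $\mc{B}^{\dg}_{\mc{M}\mc{N}}$ through \eqref{rolah2}, and contract using the identities $\mc{B}^{\dg}_{\mc{M}\mc{N}}\n\mc{B}\n\mc{B}^{\dg}_{\mc{M}\mc{N}}=\mc{B}^{\dg}_{\mc{M}\mc{N}}$ and $\mc{A}^{\dg}_{\mc{M}\mc{N}}\n\mc{A}\n\mc{A}^{\dg}_{\mc{M}\mc{N}}=\mc{A}^{\dg}_{\mc{M}\mc{N}}$ from \eqref{wmpie2}. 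Notably, this step does not use the new hypothesis on $\mc{A}$.

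The only place where $\mc{M}\n\mc{A}^{\dg}_{\mc{M}\mc{N}}=(\mc{M}\n\mc{A})^H$ really enters is in proving $(\mc{M}\n\mc{X}\n\mc{Y})^H=\mc{M}\n\mc{X}\n\mc{Y}$. Since $\mc{M}$ is Hermitian, the hypothesis can be read in the two equivalent forms $\mc{A}^{\dg}_{\mc{M}\mc{N}}=\mc{M}^{-1}\n\mc{A}^H\n\mc{M}$ and, after Hermitian conjugation, $(\mc{A}^{\dg}_{\mc{M}\mc{N}})^H\n\mc{M}=\mc{M}\n\mc{A}$. Computing $(\mc{M}\n\mc{A}\n\mc{B}\n\mc{B}^{\dg}_{\mc{M}\mc{N}}\n\mc{A}^{\dg}_{\mc{M}\mc{N}})^H$ reverses the order, and I would then replace $(\mc{B}\n\mc{B}^{\dg}_{\mc{M}\mc{N}})^H$ via \eqref{wmpie3} in the form $(\mc{B}\n\mc{B}^{\dg}_{\mc{M}\mc{N}})^H=\mc{M}\n\mc{B}\n\mc{B}^{\dg}_{\mc{M}\mc{N}}\n\mc{M}^{-1}$, collapse the outer factor via $(\mc{A}^{\dg}_{\mc{M}\mc{N}})^H\n\mc{M}=\mc{M}\n\mc{A}$, and restore the interior $\mc{M}^{-1}\n\mc{A}^H\n\mc{M}$ back to $\mc{A}^{\dg}_{\mc{M}\mc{N}}$; adjacent $\mc{M}\n\mc{M}^{-1}$ pairs cancel and $\mc{M}\n\mc{X}\n\mc{Y}$ reappears exactly. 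The main obstacle is precisely this bookkeeping of the $\mc{M}$ and $\mc{M}^{-1}$ factors: one must invoke the hypothesis on both sides (original and Hermitian), coordinated with a single application of \eqref{wmpie3}, so that every introduced weight tensor is annihilated by its inverse. Once equations (2) and (3) are in hand, the uniqueness clause of Definition \ref{wmpi} immediately yields $(\mc{A}\n\mc{B})^{\dg}_{\mc{M}\mc{N}}=\mc{B}^{\dg}_{\mc{M}\mc{N}}\n\mc{A}^{\dg}_{\mc{M}\mc{N}}$.
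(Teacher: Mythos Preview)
Your proposal is correct, and it is precisely the argument the paper leaves implicit: the paper states this theorem without proof, relying on the reader to adapt the proof of Theorem~\ref{rolsct1}. Your verification of the four axioms \eqref{wmpie1}--\eqref{wmpie4} is sound, and your handling of $(\mc{M}\n\mc{X}\n\mc{Y})^{H}=\mc{M}\n\mc{X}\n\mc{Y}$ via the twin readings $\mc{A}^{\dg}_{\mc{M}\mc{N}}=\mc{M}^{-1}\n\mc{A}^{H}\n\mc{M}$ and $(\mc{A}^{\dg}_{\mc{M}\mc{N}})^{H}\n\mc{M}=\mc{M}\n\mc{A}$, together with $(\mc{B}\n\mc{B}^{\dg}_{\mc{M}\mc{N}})^{H}=\mc{M}\n\mc{B}\n\mc{B}^{\dg}_{\mc{M}\mc{N}}\n\mc{M}^{-1}$ from \eqref{wmpie3}, is exactly right.

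One point worth making explicit: in the proof of Theorem~\ref{rolsct1} the paper obtains $\mc{Y}\n\mc{X}\n\mc{Y}=\mc{Y}$ from \eqref{rolsc1} and \eqref{rolsc2}, which are \emph{not} available here. You instead derive it from \eqref{rolah1} and \eqref{rolah2} alone, by commuting $\mc{A}^{\dg}_{\mc{M}\mc{N}}\n\mc{A}$ past $\mc{B}$ and then past $\mc{B}^{\dg}_{\mc{M}\mc{N}}$ before contracting. That rerouting is not merely a convenience but a necessity in this setting, and it is the one step where a reader blindly transcribing the proof of Theorem~\ref{rolsct1} would get stuck; you have handled it correctly.
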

 Similarly, conditions \eqref{rolsc1} and \eqref{rolsc2} are sufficient to hold the reverse-order law whenever $\mc{N}\n\mc{B}^{\dg}_{\mc{M}\mc{N}}=(\mc{N}\n\mc{B})^{H}$.
\begin{theorem}
Let $\mc{A}$, $\mc{B} \in {\C}^{I_{1}\times \cdots \times I_{N}\times I_{1}\times \cdots \times I_{N}}$. Also, let $\mc{M},~\mc{N}\in{\C}^{I_{1}\times \cdots \times I_{N}\times I_{1}\times \cdots \times I_{N}}$ be Hermitian positive definite tensors and $\mc{N}\n\mc{B}^{\dg}_{\mc{M}\mc{N}}=(\mc{N}\n\mc{B})^{H}$. If 
\begin{eqnarray*}
\mc{A}\n(\mc{B}\n\mc{B}^{\dg}_{\mc{M}\mc{N}})&=&(\mc{B}\n\mc{B}^{\dg}_{\mc{M}\mc{N}})\n\mc{A},\\
\mc{A}^{\dg}_{\mc{M}\mc{N}}\n(\mc{B}\n\mc{B}^{\dg}_{\mc{M}\mc{N}})&=&(\mc{B}\n\mc{B}^{\dg}_{\mc{M}\mc{N}})\n\mc{A}^{\dg}_{\mc{M}\mc{N}},
\end{eqnarray*}
then $(\mc{A}\n\mc{B})^{\dg}_{\mc{M}\mc{N}}=\mc{B}^{\dg}_{\mc{M}\mc{N}}\n\mc{A}^{\dg}_{\mc{M}\mc{N}}$.
\end{theorem}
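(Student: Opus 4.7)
Let $\mc{X}=\mc{A}\n\mc{B}$ and $\mc{Y}=\mc{B}^{\dg}_{\mc{M}\mc{N}}\n\mc{A}^{\dg}_{\mc{M}\mc{N}}$. The plan is to verify the four defining equations \eqref{wmpie1}--\eqref{wmpie4} of the weighted Moore--Penrose inverse of $\mc{X}$ with weights $\mc{M},\mc{N}$; uniqueness in Definition \ref{wmpi} then gives $\mc{X}^{\dg}_{\mc{M}\mc{N}}=\mc{Y}$, which is exactly the desired reverse-order law.

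Since the two commutativity hypotheses are identical to \eqref{rolsc1} and \eqref{rolsc2} of Theorem \ref{rolsct1}, I would verify \eqref{wmpie1}, \eqref{wmpie2}, and \eqref{wmpie3} by re-running the relevant portions of that proof. Concretely, for \eqref{wmpie1} a double application of \eqref{rolsc1} lets me slide $\mc{A}$ past $\mc{B}\n\mc{B}^{\dg}_{\mc{M}\mc{N}}$ inside $\mc{X}\n\mc{Y}\n\mc{X}$; together with $\mc{A}\n\mc{A}^{\dg}_{\mc{M}\mc{N}}\n\mc{A}=\mc{A}$ and $\mc{B}\n\mc{B}^{\dg}_{\mc{M}\mc{N}}\n\mc{B}=\mc{B}$ the expression collapses to $\mc{A}\n\mc{B}$. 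For \eqref{wmpie2}, \eqref{rolsc1} followed by \eqref{rolsc2} reduces $\mc{Y}\n\mc{X}\n\mc{Y}$ to $\mc{Y}$. For \eqref{wmpie3}, I would first use \eqref{rolsc1} to rewrite
\begin{equation*}
\mc{M}\n\mc{X}\n\mc{Y}=(\mc{M}\n\mc{B}\n\mc{B}^{\dg}_{\mc{M}\mc{N}})\n\mc{M}^{-1}\n(\mc{M}\n\mc{A}\n\mc{A}^{\dg}_{\mc{M}\mc{N}}),
\end{equation*}
where each parenthesised factor is Hermitian by \eqref{wmpie3} applied separately to $\mc{B}$ and $\mc{A}$, and $\mc{M}^{-1}$ is Hermitian because $\mc{M}$ is. Taking the conjugate transpose swaps the outer factors, and a single application of \eqref{rolsc2} reassembles the resulting product back into $\mc{M}\n\mc{X}\n\mc{Y}$.

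The essential new ingredient is \eqref{wmpie4}, which in Theorem \ref{rolsct1} leaned on the now-unavailable \eqref{rolsc3} and \eqref{rolsc4}. Here the extra hypothesis $\mc{N}\n\mc{B}^{\dg}_{\mc{M}\mc{N}}=(\mc{N}\n\mc{B})^{H}=\mc{B}^{H}\n\mc{N}$ (the second equality because $\mc{N}$ is Hermitian) lets me rewrite
\begin{equation*}
\mc{N}\n\mc{Y}\n\mc{X}=(\mc{N}\n\mc{B}^{\dg}_{\mc{M}\mc{N}})\n(\mc{A}^{\dg}_{\mc{M}\mc{N}}\n\mc{A})\n\mc{B}=\mc{B}^{H}\n(\mc{N}\n\mc{A}^{\dg}_{\mc{M}\mc{N}}\n\mc{A})\n\mc{B}.
\end{equation*}
The central factor $\mc{N}\n\mc{A}^{\dg}_{\mc{M}\mc{N}}\n\mc{A}$ is Hermitian by \eqref{wmpie4} applied to $\mc{A}$, so the whole sandwich is self-conjugate-transpose, which is \eqref{wmpie4} for $\mc{X}$ and $\mc{Y}$. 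I expect the only genuinely delicate bookkeeping to sit in \eqref{wmpie3}---the artificial insertion of $\mc{M}^{-1}\n\mc{M}$ needed to expose two independently Hermitian blocks---whereas the new hypothesis handles \eqref{wmpie4} in one clean sandwich argument and the remaining axioms fall straight out of the two commutativity assumptions.
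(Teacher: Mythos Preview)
Your proposal is correct and follows exactly the approach the paper intends: the paper states this theorem without proof, prefacing it only with ``Similarly, conditions \eqref{rolsc1} and \eqref{rolsc2} are sufficient \ldots'', so the implicit argument is precisely a re-run of the proof of Theorem \ref{rolsct1} with the verification of \eqref{wmpie4} replaced by the sandwich argument you give. One minor remark: the paper's proof of Theorem \ref{rolsct1} verifies \eqref{wmpie1} via \eqref{rolsc3}--\eqref{rolsc4}, whereas you (correctly and necessarily, since those hypotheses are absent here) handle it with two applications of \eqref{rolsc1} alone; this is a harmless but essential deviation that the paper's ``Similarly'' glosses over.
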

Next, we provide an interesting result which confirms that only one condition among \eqref{rolsc1}-\eqref{rolsc4} is sufficient to hold the reverse-order law for the tensors $\mc{A}$ and $\mc{B}$ satisfying $\mc{M}\n\mc{A}^{\dg}_{\mc{M}\mc{N}}=(\mc{M}\n\mc{A})^{H}$ and $\mc{N}\n\mc{B}^{\dg}_{\mc{M}\mc{N}}=(\mc{N}\n\mc{B})^{H}$.
\begin{theorem}
Let $\mc{A}$, $\mc{B} \in {\C}^{I_{1}\times \cdots \times I_{N}\times I_{1}\times \cdots \times I_{N}}$. Also, let $\mc{M},~\mc{N}\in{\C}^{I_{1}\times \cdots \times I_{N}\times I_{1}\times \cdots \times I_{N}}$ be Hermitian positive definite tensors and $\mc{M}\n\mc{A}^{\dg}_{\mc{M}\mc{N}}=(\mc{M}\n\mc{A})^{H}$ and $\mc{N}\n\mc{B}^{\dg}_{\mc{M}\mc{N}}=(\mc{N}\n\mc{B})^{H}$. If at least one of the following holds
\begin{eqnarray}
\mc{A}\n(\mc{B}\n\mc{B}^{\dg}_{\mc{M}\mc{N}})&=&(\mc{B}\n\mc{B}^{\dg}_{\mc{M}\mc{N}})\n\mc{A},\\
\mc{A}^{\dg}_{\mc{M}\mc{N}}\n(\mc{B}\n\mc{B}^{\dg}_{\mc{M}\mc{N}})&=&(\mc{B}\n\mc{B}^{\dg}_{\mc{M}\mc{N}})\n\mc{A}^{\dg}_{\mc{M}\mc{N}},\\
\mc{B}\n(\mc{A}^{\dg}_{\mc{M}\mc{N}}\n\mc{A})&=&(\mc{A}^{\dg}_{\mc{M}\mc{N}}\n\mc{A})\n\mc{B},\\
\mc{B}^{\dg}_{\mc{M}\mc{N}}\n(\mc{A}^{\dg}_{\mc{M}\mc{N}}\n\mc{A})&=&(\mc{A}^{\dg}_{\mc{M}\mc{N}}\n\mc{A})\n\mc{B}^{\dg}_{\mc{M}\mc{N}},
\end{eqnarray}
then $(\mc{A}\n\mc{B})^{\dg}_{\mc{M}\mc{N}}=\mc{B}^{\dg}_{\mc{M}\mc{N}}\n\mc{A}^{\dg}_{\mc{M}\mc{N}}$.
\end{theorem}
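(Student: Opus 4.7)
The plan is to reduce the ``at least one'' hypothesis to one of the two preceding theorems by showing that, under the pair of hypotheses $\mc{M}\n\mc{A}^{\dg}_{\mc{M}\mc{N}}=(\mc{M}\n\mc{A})^{H}$ and $\mc{N}\n\mc{B}^{\dg}_{\mc{M}\mc{N}}=(\mc{N}\n\mc{B})^{H}$, conditions (i) and (ii) are equivalent to each other, and similarly conditions (iii) and (iv) are equivalent to each other. Once these two equivalences are in place, the cases split cleanly: if either (i) or (ii) holds, then both hold, so the preceding theorem (the one assuming $\mc{N}\n\mc{B}^{\dg}_{\mc{M}\mc{N}}=(\mc{N}\n\mc{B})^{H}$ together with conditions (i) and (ii) of Theorem~\ref{rolsct1}) yields the reverse-order law; symmetrically, if either (iii) or (iv) holds, then both hold, and the other preceding theorem (which uses $\mc{M}\n\mc{A}^{\dg}_{\mc{M}\mc{N}}=(\mc{M}\n\mc{A})^{H}$ along with (iii) and (iv)) finishes the proof.

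To establish (i) $\iff$ (ii), I would start from (i), multiply on the left by $\mc{M}$, and take the conjugate transpose. The key inputs are that $\mc{M}\n\mc{B}\n\mc{B}^{\dg}_{\mc{M}\mc{N}}$ is Hermitian by \eqref{wmpie3} applied to $\mc{B}$, and that the hypothesis $\mc{M}\n\mc{A}^{\dg}_{\mc{M}\mc{N}}=\mc{A}^{H}\n\mc{M}$ lets me rewrite $\mc{A}^{H}\n\mc{M}$ as $\mc{M}\n\mc{A}^{\dg}_{\mc{M}\mc{N}}$. After substituting, the left-hand side becomes $\mc{M}\n\mc{B}\n\mc{B}^{\dg}_{\mc{M}\mc{N}}\n\mc{A}^{\dg}_{\mc{M}\mc{N}}$ and the right-hand side becomes $\mc{M}\n\mc{A}^{\dg}_{\mc{M}\mc{N}}\n\mc{B}\n\mc{B}^{\dg}_{\mc{M}\mc{N}}$, and cancelling $\mc{M}$ on the left delivers (ii). The reverse direction (ii) $\Rightarrow$ (i) follows by running the same chain backwards, using additionally that $(\mc{A}^{\dg}_{\mc{M}\mc{N}})^{H}\n\mc{M}=\mc{M}\n\mc{A}$, which is obtained by taking the conjugate transpose of the hypothesis on $\mc{A}$.

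The equivalence (iii) $\iff$ (iv) is obtained by an entirely analogous calculation with the roles of $\mc{M}$ and $\mc{N}$ and of $\mc{A}$ and $\mc{B}$ swapped: now $\mc{N}\n\mc{A}^{\dg}_{\mc{M}\mc{N}}\n\mc{A}$ is the Hermitian object (by \eqref{wmpie4}) and the hypothesis on $\mc{B}$ lets us convert $\mc{B}^{H}\n\mc{N}$ into $\mc{N}\n\mc{B}^{\dg}_{\mc{M}\mc{N}}$. With the four conditions grouped as $\{(i),(ii)\}$ and $\{(iii),(iv)\}$ and each group collapsing to a single condition, the hypothesis ``at least one of (i)--(iv) holds'' forces one of the two preceding theorems to apply.

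The main obstacle is not conceptual but notational: the conjugate-transpose manipulation must keep track of the weights carefully, particularly the places where one has to recognise an expression of the form $\mc{A}^{H}\n\mc{M}$ (respectively $\mc{B}^{H}\n\mc{N}$) and rewrite it via the EP-type hypothesis as $\mc{M}\n\mc{A}^{\dg}_{\mc{M}\mc{N}}$ (respectively $\mc{N}\n\mc{B}^{\dg}_{\mc{M}\mc{N}}$). Once this substitution pattern is in hand the proof is short, and the whole argument hinges on the observation that the EP-type hypotheses on $\mc{A}$ and $\mc{B}$ are strong enough to turn ``$\mc{A}$ commutes with $\mc{B}\n\mc{B}^{\dg}_{\mc{M}\mc{N}}$'' into ``$\mc{A}^{\dg}_{\mc{M}\mc{N}}$ commutes with $\mc{B}\n\mc{B}^{\dg}_{\mc{M}\mc{N}}$,'' and dually.
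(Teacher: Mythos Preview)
Your argument is correct. The paper does not supply a proof for this theorem (it is stated immediately after the two preceding theorems and left unproved), but the reduction you describe is exactly the one the paper's organisation invites: use the hypothesis $\mc{M}\n\mc{A}^{\dg}_{\mc{M}\mc{N}}=(\mc{M}\n\mc{A})^{H}$ together with \eqref{wmpie3} to show that (i) and (ii) are equivalent, use $\mc{N}\n\mc{B}^{\dg}_{\mc{M}\mc{N}}=(\mc{N}\n\mc{B})^{H}$ together with \eqref{wmpie4} to show that (iii) and (iv) are equivalent, and then invoke the two immediately preceding theorems. Your conjugate-transpose computation is accurate in both cases, so nothing further is needed.
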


\subsection{Reverse-order law for arbitrary tensors}
In this subsection, we provide some necessary and sufficient conditions of the reverse-order law for the weighted Moore--Penrose inverse of the Einstein product of two tensors. 
The very first result is proved in \cite{bmm} using the range and null space of a tensor, however we present a new proof without using the notion of the range and null space of a tensor.
\begin{theorem}\label{wrol1}
Let $\mc{A}\in {\C}^{I_{1}\times \cdots\times I_{M}\times J_{1}\times \cdots \times J_{N}}$ and  $\mc{B}\in {\C}^{J_{1}\times \cdots \times J_{N}\times K_{1}\times \cdots\times K_{L}}$. Also, let $\mc{M}\in\mc{C}^{I_{1}\times \cdots\times I_{M}\times I_{1}\times \cdots\times I_{M}}$, $\mc{N}\in \mc{C}^{J_{1}\times \cdots\times J_{N}\times J_{1}\times \cdots\times J_{N}}$ and  $\mc{L}\in \mc{C}^{K_{1}\times \cdots\times K_{L}\times K_{1}\times \cdots\times K_{L}}$ be Hermitian positive definite tensors. Then 
$$(\mc{A}\n\mc{B})^{\dg}_{\mc{M}\mc{L}}=\mc{B}^{\dg}_{\mc{N}\mc{L}}\n\mc{A}^{\dg}_{\mc{M}\mc{N}}$$
if and only if 
\begin{eqnarray}
\mc{A}^{\dg}_{\mc{M}\mc{N}}\m\mc{A}\n\mc{B}\lp\mc{B}^{\#}_{\mc{N}\mc{L}}\n\mc{A}^{\#}_{\mc{M}\mc{N}}&=&\mc{B}\lp\mc{B}^{\#}_{\mc{N}\mc{L}}\n\mc{A}^{\#}_{\mc{M}\mc{N}},\label{role1}\\
\mc{B}\lp\mc{B}^{\dg}_{\mc{N}\mc{L}}\n\mc{A}^{\#}_{\mc{M}\mc{N}}\m\mc{A}\n\mc{B}&=&\mc{A}^{\#}_{\mc{M}\mc{N}}\m\mc{A}\n\mc{B}.\label{role2}
\end{eqnarray}
\end{theorem}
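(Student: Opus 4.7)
The plan is to establish the biconditional by proving the two implications separately, both of which amount to careful manipulation of the four defining equations of Definition \ref{wmpi} alongside the absorption-type identities collected in Section 3.

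For sufficiency, I would set $\mc{Y}=\mc{B}^{\dg}_{\mc{N}\mc{L}}\n\mc{A}^{\dg}_{\mc{M}\mc{N}}$ and verify that $\mc{Y}$ plays the role of $(\mc{A}\n\mc{B})^{\dg}_{\mc{M}\mc{L}}$ in each of \eqref{wmpie1}--\eqref{wmpie4}. For \eqref{wmpie1}, I would rewrite $(\mc{A}\n\mc{B})\lp\mc{Y}\m(\mc{A}\n\mc{B})$ by isolating the middle factor $\mc{B}\lp\mc{B}^{\dg}_{\mc{N}\mc{L}}\n\mc{A}^{\dg}_{\mc{M}\mc{N}}\m\mc{A}\n\mc{B}$, convert the inner $\mc{A}^{\dg}_{\mc{M}\mc{N}}$ into $\mc{A}^{\#}_{\mc{M}\mc{N}}$ via the auxiliary identity $\mc{A}^{\dg}_{\mc{M}\mc{N}}\m\mc{A}\n\mc{A}^{\#}_{\mc{M}\mc{N}}=\mc{A}^{\#}_{\mc{M}\mc{N}}$ (which follows from \eqref{wmpie3} applied to $\mc{A}$), then apply hypothesis \eqref{role2}, and finish by invoking \eqref{wmpie1} for $\mc{A}$; \eqref{wmpie2} is handled symmetrically using \eqref{role1}. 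For the $\mc{M}$-Hermiticity \eqref{wmpie3}, I would rewrite the requirement as $(\mc{A}\n\mc{B}\lp\mc{B}^{\dg}_{\mc{N}\mc{L}}\n\mc{A}^{\dg}_{\mc{M}\mc{N}})^{\#}_{\mc{M}\mc{M}}=\mc{A}\n\mc{B}\lp\mc{B}^{\dg}_{\mc{N}\mc{L}}\n\mc{A}^{\dg}_{\mc{M}\mc{N}}$, expand the left side via the reverse-order law for $\#$, and collapse it using $(\mc{A}\n\mc{A}^{\dg}_{\mc{M}\mc{N}})^{\#}_{\mc{M}\mc{M}}=\mc{A}\n\mc{A}^{\dg}_{\mc{M}\mc{N}}$ together with the commutativity furnished by \eqref{role1}; \eqref{wmpie4} is analogous and uses \eqref{role2}.

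For necessity, assume the reverse-order law, so that $\mc{Y}=(\mc{A}\n\mc{B})^{\dg}_{\mc{M}\mc{L}}$. I expect that \eqref{wmpie1} and \eqref{wmpie3} for the pair $(\mc{A}\n\mc{B},\mc{Y})$, combined with the unconditional reverse-order law for $\#$ and the absorption identities from Section 3, will yield \eqref{role1} after suitable pre- and post-multiplication by $\mc{A}^{\#}_{\mc{M}\mc{N}}$; condition \eqref{role2} should then be derived symmetrically from \eqref{wmpie2} and \eqref{wmpie4}.

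The hard part will be the constant bookkeeping between the weighted conjugate transpose $\#$, which enjoys an unconditional reverse-order law, and the weighted Moore--Penrose inverse $\dg$, whose reverse-order law is exactly what is being characterized. The bridge is supplied by the Section 3 identities $\mc{A}^{\dg}_{\mc{M}\mc{N}}\m\mc{A}\n\mc{A}^{\#}_{\mc{M}\mc{N}}=\mc{A}^{\#}_{\mc{M}\mc{N}}$ and $\mc{A}^{\#}_{\mc{M}\mc{N}}\m\mc{A}\n\mc{A}^{\dg}_{\mc{M}\mc{N}}=\mc{A}^{\#}_{\mc{M}\mc{N}}$ (and their $\mc{B}$ analogues), which allow $\#$ and $\dg$ to be swapped inside a long product at precisely the positions where hypotheses \eqref{role1}--\eqref{role2} need to be applied.
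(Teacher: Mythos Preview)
Your sufficiency argument is sound and matches the paper's: both verify the four equations of Definition~\ref{wmpi} for $\mc{Y}=\mc{B}^{\dg}_{\mc{N}\mc{L}}\n\mc{A}^{\dg}_{\mc{M}\mc{N}}$, using the $\#$--$\dg$ bridging identities you list (the paper packages some of this into intermediate equations and invokes Lemma~\ref{lm3.13} for \eqref{wmpie2}, but the substance is the same).

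The necessity direction, however, has a genuine gap. Your plan is to obtain \eqref{role1} by ``suitable pre- and post-multiplication by $\mc{A}^{\#}_{\mc{M}\mc{N}}$'' applied to \eqref{wmpie1} and \eqref{wmpie3} for $\mc{A}\n\mc{B}$. But \eqref{role1} asserts that the projector $\mc{A}^{\dg}_{\mc{M}\mc{N}}\m\mc{A}$ can be \emph{removed} from the left of $\mc{B}\lp\mc{B}^{\#}_{\mc{N}\mc{L}}\n\mc{A}^{\#}_{\mc{M}\mc{N}}$; left-multiplying anything available (be it $\mc{A}^{\#}$, $\mc{A}^{\dg}$, or $\mc{A}$) only \emph{inserts} that projector rather than cancels it. Absorption identities let you trade $\#$ for $\dg$ inside a product, but they do not let you strip a projector off a tensor whose range you do not yet control.

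The paper closes this gap with a different idea: after manipulation it reaches
\[
\mc{A}\n\mc{B}\lp\mc{B}^{\#}_{\mc{N}\mc{L}}\n(\mc{I}-\mc{A}^{\dg}_{\mc{M}\mc{N}}\m\mc{A})\n\mc{B}\lp\mc{B}^{\#}_{\mc{N}\mc{L}}\n\mc{A}^{\#}_{\mc{M}\mc{N}}=\mc{O},
\]
observes that $\mc{I}-\mc{A}^{\dg}_{\mc{M}\mc{N}}\m\mc{A}$ is idempotent with $(\mc{I}-\mc{A}^{\dg}_{\mc{M}\mc{N}}\m\mc{A})^{\#}_{\mc{N}\mc{N}}=\mc{I}-\mc{A}^{\dg}_{\mc{M}\mc{N}}\m\mc{A}$, and thereby rewrites the left side as $\mc{X}^{\#}_{\mc{N}\mc{M}}\n\mc{X}$ with $\mc{X}=(\mc{I}-\mc{A}^{\dg}_{\mc{M}\mc{N}}\m\mc{A})\n\mc{B}\lp\mc{B}^{\#}_{\mc{N}\mc{L}}\n\mc{A}^{\#}_{\mc{M}\mc{N}}$. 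The positivity result ``$\mc{X}^{\#}\n\mc{X}=\mc{O}\Rightarrow\mc{X}=\mc{O}$'' then yields $\mc{X}=\mc{O}$, which is exactly \eqref{role1}; \eqref{role2} is obtained analogously. This quadratic-form/positivity step is the missing ingredient in your plan; without it (or an equivalent range-space argument) the necessity direction does not go through.
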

\begin{proof}
Suppose that $(\mc{A}\n\mc{B})^{\dg}_{\mc{M}\mc{L}}=\mc{B}^{\dg}_{\mc{N}\mc{L}}\n\mc{A}^{\dg}_{\mc{M}\mc{N}}$. We have
\begin{equation*}
    (\mc{A}\n\mc{B})^{\#}_{\mc{M}\mc{L}}=(\mc{A}\n\mc{B})^{\dg}_{\mc{M}\mc{L}}\m(\mc{A}\n\mc{B})\lp(\mc{A}\n\mc{B})^{\#}_{\mc{M}\mc{L}},
\end{equation*}
which results
\begin{eqnarray*}
\mc{B}^{\#}_{\mc{N}\mc{L}} \n\mc{A}^{\#}_{\mc{M}\mc{N}}&=&\mc{B}^{\dg}_{\mc{N}\mc{L}}\n\mc{A}^{\dg}_{\mc{M}\mc{N}}\m\mc{A}\n\mc{B}\lp\mc{B}^{\#}_{\mc{N}\mc{L}}\n\mc{A}^{\#}_{\mc{N}\mc{L}}.
\end{eqnarray*}
Pre-multiply both sides with $\mc{A}\n\mc{B}\lp\mc{B}^{\#}_{\mc{N}\mc{L}}\n\mc{B}$, we get
\begin{eqnarray*}
\mc{A}\n\mc{B}\lp\mc{B}^{\#}_{\mc{N}\mc{L}}\n\mc{B}\lp\mc{B}^{\#}_{\mc{N}\mc{L}} \n\mc{A}^{\#}_{\mc{M}\mc{N}}
&=&\mc{A}\n\mc{B}\lp\mc{B}^{\#}_{\mc{N}\mc{L}}\n\mc{A}^{\dg}_{\mc{M}\mc{N}}\m\mc{A}\n\mc{B}\lp\mc{B}^{\#}_{\mc{N}\mc{L}}\n\mc{A}^{\#}_{\mc{M}\mc{N}}.
\end{eqnarray*}
From which, we get
\begin{eqnarray*}
\mc{A}\n\mc{B}\lp\mc{B}^{\#}_{\mc{N}\mc{L}}\n(\mc{I}-\mc{A}^{\dg}_{\mc{M}\mc{N}}\m\mc{A})\n\mc{B}\lp\mc{B}^{\#}_{\mc{N}\mc{L}}\n\mc{A}^{\#}_{\mc{M}\mc{N}}&=&\mc{O}.
\end{eqnarray*}
Since $\mc{I}-
\mc{A}^{\dg}_{\mc{M}\mc{N}}\m\mc{A}$ is idempotent and $(\mc{I}-
\mc{A}^{\dg}_{\mc{M}\mc{N}}\m\mc{A})^{\#}_{\mc{N}\mc{N}}=\mc{I}-
\mc{A}^{\dg}_{\mc{M}\mc{N}}\m\mc{A}$. So, the last equation can be written as
\begin{equation*}
     [(\mc{I}-
\mc{A}^{\dg}_{\mc{M}\mc{N}}\m\mc{A})\n\mc{B}\lp\mc{B}^{\#}_{\mc{N}\mc{L}}\n \mc{A}^{\#}_{\mc{M}\mc{N}}]^{\#}_{\mc{N}\mc{M}}\n(\mc{I}-
\mc{A}^{\dg}_{\mc{M}\mc{N}}\m\mc{A})\n\mc{B}\lp\mc{B}^{\#}_{\mc{N}\mc{L}}\n\mc{A}^{\#}_{\mc{M}\mc{N}}=\mc{O}.
\end{equation*}
Thus,
\begin{equation*}
    \mc{A}^{\dg}_{\mc{M}\mc{N}}\m\mc{A}\n\mc{B}\lp\mc{B}^{\#}_{\mc{N}\mc{L}}\n\mc{A}^{\#}_{\mc{M}\mc{N}}=\mc{B}\lp\mc{B}^{\#}_{\mc{N}\mc{L}}\n\mc{A}^{\#}_{\mc{M}\mc{N}}.
\end{equation*}
Again,
\begin{eqnarray*}
\mc{A}\n\mc{B} 
&=& (\mc{A}^{\dg}_{\mc{M}\mc{N}})^{\#}_{\mc{N}\mc{M}} \n(\mc{B}^{\dg}_{\mc{N}\mc{L}})^{\#}_{\mc{L}\mc{N}} \lp\mc{B}^{\#}_{\mc{N}\mc{L}} \n\mc{A}^{\#}_{\mc{M}\mc{N}} \m\mc{A}\n\mc{B}\\
&=& (\mc{A}^{\dg}_{\mc{M}\mc{N}})^{\#}_{\mc{N}\mc{M}} \n\mc{B}\lp\mc{B}^{\dg}_{\mc{N}\mc{L}} \n\mc{A}^{\#}_{\mc{M}\mc{N}} \m\mc{A}\n\mc{B}.
\end{eqnarray*}
Now, pre-multiplying $\mc{B}^{\#}_{\mc{N}\mc{L}}\n\mc{A}^{\#}_{\mc{M}\mc{N}} \m\mc{A}\n\mc{A}^{\#}_{\mc{M}\mc{N}}$ to $\mc{A}\n\mc{B}=(\mc{A}^{\dg}_{\mc{M}\mc{N}})^{\#}_{\mc{N}\mc{M}} \n\mc{B}\lp\mc{B}^{\dg}_{\mc{N}\mc{L}} \n\mc{A}^{\#}_{\mc{M}\mc{N}}\m\mc{A}\n\mc{B}$, we get
\begin{eqnarray*}
\mc{B}^{\#}_{\mc{N}\mc{L}}\n\mc{A}^{\#}_{\mc{M}\mc{N}} \m\mc{A}\n\mc{A}^{\#}_{\mc{M}\mc{N}}\m\mc{A}\n\mc{B} &=&\mc{B}^{\#}_{\mc{N}\mc{L}}\n\mc{A}^{\#}_{\mc{M}\mc{N}} \m \mc{A} \n\mc{B}\lp\mc{B}^{\dg}_{\mc{N}\mc{L}} \n\mc{A}^{\#}_{\mc{M}\mc{N}} \m\mc{A}\n\mc{B},
\end{eqnarray*}
which gives
\begin{eqnarray*}
\mc{B}^{\#}_{\mc{N}\mc{L}}\n\mc{A}^{\#}_{\mc{M}\mc{N}} \m\mc{A}\n(\mc{I}-\mc{B}\lp\mc{B}^{\dg}_{\mc{N}\mc{L}})\n\mc{A}^{\#}_{\mc{M}\mc{N}}\m\mc{A}\n\mc{B}&=&\mc{O}.
\end{eqnarray*}
Since $\mc{I}-\mc{B}\lp\mc{B}^{\dg}_{\mc{N}\mc{L}}$ is idempotent and $(\mc{I} -\mc{B} \lp\mc{B}^{\dg}_{\mc{N}\mc{L}})^{\#}_{\mc{N}\mc{N}}=\mc{I} -\mc{B} \lp\mc{B}^{\dg}_{\mc{N}\mc{L}}$. So
\begin{eqnarray*}
   [(\mc{I}-\mc{B}\lp\mc{B}^{\dg}_{\mc{N}\mc{L}}) \n\mc{A}^{\#}_{\mc{M}\mc{N}}\m\mc{A}\n\mc{B}]^{\#}_{\mc{N}\mc{L}}\n(\mc{I}-\mc{B}\lp\mc{B}^{\dg}_{\mc{N}\mc{L}}) \n\mc{A}^{\#}_{\mc{M}\mc{N}}\m\mc{A}\n\mc{B} &=& \mc{O}
\end{eqnarray*}
Thus,
\begin{equation*}
    \mc{B}\lp\mc{B}^{\dg}_{\mc{N}\mc{L}}\n\mc{A}^{\#}_{\mc{M}\mc{N}}\m\mc{A}\n\mc{B}=\mc{A}^{\#}_{\mc{M}\mc{N}}\m\mc{A}\n\mc{B}.
\end{equation*}
Conversely, suppose that Equations \eqref{role1} and \eqref{role2} hold, i.e.,
\begin{eqnarray*}
\mc{A}^{\dg}_{\mc{M}\mc{N}}\m\mc{A}\n\mc{B}\lp\mc{B}^{\#}_{\mc{N}\mc{L}}\n\mc{A}^{\#}_{\mc{M}\mc{N}}&=&\mc{B}\lp\mc{B}^{\#}_{\mc{N}\mc{L}}\n\mc{A}^{\#}_{\mc{M}\mc{N}},\\
\mc{B}\lp\mc{B}^{\dg}_{\mc{N}\mc{L}}\n\mc{A}^{\#}_{\mc{M}\mc{N}}\m\mc{A}\n\mc{B}&=&\mc{A}^{\#}_{\mc{M}\mc{N}}\m\mc{A}\n\mc{B}.
\end{eqnarray*}
Pre-multiplying and post-multiplying Equation \eqref{role1} by $\mc{B}^{\dg}_{\mc{N}\mc{L}}$ and $((\mc{A}\n\mc{B})^{\#}_{\mc{M}\mc{L}})^{\dg}_{\mc{L}\mc{M}}$, respectively, we get
\begin{equation}
\mc{B}^{\dg}_{\mc{N}\mc{L}}\n\mc{A}^{\dg}_{\mc{M}\mc{N}}\m\mc{A}\n\mc{B} =  (\mc{A}\n\mc{B})^{\dg}_{\mc{M}\mc{L}}\m (\mc{A} \n \mc{B}).\label{eq21}
\end{equation}
Taking the weighted conjugate transpose  of Equation \eqref{role2}, we have 
\begin{equation}\label{eq22}
\mc{B}^{\#}_{\mc{N}\mc{L}} \n\mc{A}^{\#}_{\mc{M}\mc{N}} \m\mc{A} \n\mc{B} \lp\mc{B}^{\dg}_{\mc{N}\mc{L}}  =  \mc{B}^{\#}_{\mc{N}\mc{L}} \n\mc{A}^{\#}_{\mc{M}\mc{N}} \m\mc{A}.
\end{equation}
Pre-multiplying and post-multiplying Equation \eqref{eq22} by $((\mc{A}\n\mc{B})^{\#}_{\mc{M}\mc{L}})^{\dg}_{\mc{L}\mc{M}}$ and $\mc{A}^{\dg}_{\mc{M}\mc{N}}$, respectively, we obtain
\begin{equation}
\mc{A} \n\mc{B} \lp\mc{B}^{\dg}_{\mc{N}\mc{L}}\n\mc{A}^{\dg}_{\mc{M}\mc{N}} =  \mc{A} \n\mc{B}\lp(\mc{A}\n\mc{B})^{\dg}_{\mc{M}\mc{L}}.\label{eq23}
\end{equation}

In order to show
$(\mc{A}\n\mc{B})^{\dg}_{\mc{M}\mc{L}} = \mc{B}^{\dg}_{\mc{N}\mc{L}} \n\mc{A}^{\dg}_{\mc{M}\mc{N}}$, we have to show that  $\mc{B}^{\dg}_{\mc{N}\mc{L}} \n\mc{A}^{\dg}_{\mc{M}\mc{N}}$ satisfies Definition \ref{defmpi}, and is shown below. Using Equation \eqref{eq21}, we have
\begin{eqnarray*}
\mc{A}\n\mc{B} \lp\mc{B}^{\dg}_{\mc{N}\mc{L}} \n\mc{A}^{\dg}_{\mc{M}\mc{N}} \m\mc{A} \n\mc{B} 
&=&  \mc{A}\n\mc{B}.
\end{eqnarray*}
Using Lemma \ref{lm3.13}, we get $\mc{B}\lp\mc{B}^{\#}_{\mc{N}\mc{L}}\n\mc{A}^{\dg}_{\mc{M}\mc{N}}\m\mc{A}\n\mc{B}\lp\mc{B}^{\dg}_{\mc{N}\mc{L}} \n\mc{A}^{\#}_{\mc{M}\mc{N}} = \mc{B}\lp\mc{B}^{\#}_{\mc{N}\mc{L}} \n\mc{A}^{\#}_{\mc{M}\mc{N}}$, which on pre-multiplying by $\mc{B}^{\dg}_{\mc{N}\mc{L}}$ gives
$\mc{B}^{\#}_{\mc{N}\mc{L}}\n\mc{A}^{\dg}_{\mc{M}\mc{N}}\m\mc{A}\n\mc{B}\lp\mc{B}^{\dg}_{\mc{N}\mc{L}} \n\mc{A}^{\#}_{\mc{M}\mc{N}} = \mc{B}^{\#}_{\mc{N}\mc{L}} \n \mc{A}^{\#}_{\mc{M}\mc{N}}$. Again, pre-multiplying $(\mc{B}^{\dg}_{\mc{N}\mc{L}})^{\#}_{\mc{L}\mc{N}}$ and post-multiplying $(\mc{A}^{\dg}_{\mc{M}\mc{N}})^{\#}_{\mc{N}\mc{M}}$ yields
\begin{eqnarray*}
 \mc{B}^{\dg}_{\mc{N}\mc{L}}\n\mc{A}^{\dg}_{\mc{M}\mc{N}}\m\mc{A}\n\mc{B}\lp\mc{B}^{\dg}_{\mc{N}\mc{L}} \n\mc{A}^{\dg}_{\mc{M}\mc{N}} &=&\mc{B}^{\dg}_{\mc{N}\mc{L}} \n \mc{A}^{\dg}_{\mc{M}\mc{N}}.
\end{eqnarray*}
Now, with the help of Equation \eqref{eq23}, we get 
\begin{eqnarray*}
(\mc{M}\m\mc{A}\n\mc{B}\lp\mc{B}^{\dg}_{\mc{N}\mc{L}}\n\mc{A}^{\dg}_{\mc{M}\mc{N}})^{H}
&=&\mc{M}\m\mc{A}\n\mc{B}\lp\mc{B}^{\dg}_{\mc{N}\mc{L}}\n\mc{A}^{\dg}_{\mc{M}\mc{N}},
\end{eqnarray*}
and using Equation \eqref{eq21}, we get
\begin{eqnarray*}
(\mc{L}\lp\mc{B}^{\dg}_{\mc{N}\mc{L}}\n\mc{A}^{\dg}_{\mc{M}\mc{N}}\m\mc{A}\n\mc{B})^{H}
&=&\mc{L}\lp\mc{B}^{\dg}_{\mc{N}\mc{L}}\n\mc{A}^{\dg}_{\mc{M}\mc{N}}\m\mc{A}\n\mc{B}.
\end{eqnarray*}
\end{proof}
Next, we provide a simpler characterization than that of Theorem \ref{wrol1} of the reverse-order law for the weighted Moore-Penrose inverse of the Einstein product of two tensors.
\begin{theorem}\label{wrol2}
Let $\mc{A}\in {\C}^{I_{1}\times \cdots\times I_{M}\times J_{1}\times \cdots \times J_{N}}$ and  $\mc{B}\in {\C}^{J_{1}\times \cdots \times J_{N}\times K_{1}\times \cdots\times K_{L}}$. Also, let $\mc{M}\in\mc{C}^{I_{1}\times \cdots\times I_{M}\times I_{1}\times \cdots\times I_{M}}$, $\mc{N}\in \mc{C}^{J_{1}\times \cdots\times J_{N}\times J_{1}\times \cdots\times J_{N}}$ and  $\mc{L}\in \mc{C}^{K_{1}\times \cdots\times K_{L}\times K_{1}\times \cdots\times K_{L}}$ be Hermitian positive definite tensors. Then 
$$(\mc{A}\n\mc{B})^{\dg}_{\mc{M}\mc{L}}=\mc{B}^{\dg}_{\mc{N}\mc{L}}\n\mc{A}^{\dg}_{\mc{M}\mc{N}}$$
if and only if  
\begin{eqnarray*}
(\mc{A}^{\dg}_{\mc{M}\mc{N}} \m\mc{A}\n\mc{B}\lp\mc{B}^{\#}_{\mc{N}\mc{L}})^{\#}_{\mc{N}\mc{N}}&=&\mc{A}^{\dg}_{\mc{M}\mc{N}} \m\mc{A}\n\mc{B}\n\mc{B}^{\#}_{\mc{N}\mc{L}}, \\
(\mc{A}^{\#}_{\mc{M}\mc{N}} \m\mc{A}\n\mc{B}\lp\mc{B}^{\dg}_{\mc{N}\mc{L}})^{\#}_{\mc{N}\mc{N}}&=&\mc{A}^{\#}_{\mc{M}\mc{N}} \m\mc{A}\n\mc{B}\lp\mc{B}^{\dg}_{\mc{N}\mc{L}}.
\end{eqnarray*}
\end{theorem}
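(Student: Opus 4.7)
The plan is to reduce Theorem \ref{wrol2} to the previous characterization, Theorem \ref{wrol1}, by showing that conditions (i) and (ii) are equivalent, respectively, to Equations \eqref{role1} and \eqref{role2}. First, I would apply the reverse-order law for the weighted conjugate transpose to the left-hand sides of (i) and (ii). Using that $(\mc{A}^{\dg}_{\mc{M}\mc{N}}\m\mc{A})^{\#}_{\mc{N}\mc{N}}=\mc{A}^{\dg}_{\mc{M}\mc{N}}\m\mc{A}$ and $(\mc{A}^{\#}_{\mc{M}\mc{N}}\m\mc{A})^{\#}_{\mc{N}\mc{N}}=\mc{A}^{\#}_{\mc{M}\mc{N}}\m\mc{A}$, which both follow at once from axiom \eqref{wmpie4} together with the Hermiticity of $\mc{N}$, and using also the self-$\#_{\mc{N}\mc{N}}$ properties $(\mc{B}\lp\mc{B}^{\#}_{\mc{N}\mc{L}})^{\#}_{\mc{N}\mc{N}}=\mc{B}\lp\mc{B}^{\#}_{\mc{N}\mc{L}}$ and $(\mc{B}\lp\mc{B}^{\dg}_{\mc{N}\mc{L}})^{\#}_{\mc{N}\mc{N}}=\mc{B}\lp\mc{B}^{\dg}_{\mc{N}\mc{L}}$ (the latter being one of the theorems in Section 3), condition (i) rewrites as the commutativity of $\mc{A}^{\dg}_{\mc{M}\mc{N}}\m\mc{A}$ with $\mc{B}\lp\mc{B}^{\#}_{\mc{N}\mc{L}}$, and condition (ii) as the commutativity of $\mc{A}^{\#}_{\mc{M}\mc{N}}\m\mc{A}$ with $\mc{B}\lp\mc{B}^{\dg}_{\mc{N}\mc{L}}$.

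Next, I would show that these commutativity statements are equivalent to \eqref{role1} and \eqref{role2}. The implications from \eqref{role1} and \eqref{role2} to the respective commutativities are precisely Lemma \ref{lm3.13} and the lemma immediately following it. For the reverse implications, I would post-multiply the first commutativity on the right by $\mc{A}^{\#}_{\mc{M}\mc{N}}$ and apply the auxiliary identity $\mc{A}^{\dg}_{\mc{M}\mc{N}}\m\mc{A}\n\mc{A}^{\#}_{\mc{M}\mc{N}}=\mc{A}^{\#}_{\mc{M}\mc{N}}$ to recover \eqref{role1}; and post-multiply the second commutativity on the right by $\mc{B}$ and invoke axiom \eqref{wmpie1} in the form $\mc{B}\lp\mc{B}^{\dg}_{\mc{N}\mc{L}}\n\mc{B}=\mc{B}$ to recover \eqref{role2}. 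Once the two equivalences are in hand, the conclusion is immediate from Theorem \ref{wrol1}.

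The one genuinely delicate step, and the main obstacle I anticipate, is the auxiliary identity $\mc{A}^{\dg}_{\mc{M}\mc{N}}\m\mc{A}\n\mc{A}^{\#}_{\mc{M}\mc{N}}=\mc{A}^{\#}_{\mc{M}\mc{N}}$. For it, I would expand $\mc{A}^{\#}_{\mc{M}\mc{N}}=\mc{N}^{-1}\n\mc{A}^{H}\m\mc{M}$ via Definition \ref{wctd}, rewrite $\mc{A}^{\dg}_{\mc{M}\mc{N}}\m\mc{A}\n\mc{N}^{-1}$ as $\mc{N}^{-1}\n\mc{A}^{H}\m(\mc{A}^{\dg}_{\mc{M}\mc{N}})^{H}$ using axiom \eqref{wmpie4}, then replace $(\mc{A}\n\mc{A}^{\dg}_{\mc{M}\mc{N}})^{H}\m\mc{M}$ by $\mc{M}\m\mc{A}\n\mc{A}^{\dg}_{\mc{M}\mc{N}}$ using axiom \eqref{wmpie3}, and finally invoke axiom \eqref{wmpie1} to collapse the inner $\mc{A}\n\mc{A}^{\dg}_{\mc{M}\mc{N}}\m\mc{A}$ back to $\mc{A}$. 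Juggling all four defining axioms together with the Hermiticity of both weights is what makes this the most technical step of the argument.
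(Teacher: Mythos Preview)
Your plan is sound and is exactly the natural route: the paper states Theorem~\ref{wrol2} immediately after Theorem~\ref{wrol1} without giving a proof, so reducing the new conditions to \eqref{role1}--\eqref{role2} via Theorem~\ref{wrol1} is precisely what is intended. Your identification of (i) and (ii) with the two commutativity statements is correct, and Lemmas~\ref{lm3.13} and its companion supply the forward implications as you say; the reverse implications via post-multiplication are also fine.

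One small wrinkle in your sketch of the auxiliary identity $\mc{A}^{\dg}_{\mc{M}\mc{N}}\m\mc{A}\n\mc{A}^{\#}_{\mc{M}\mc{N}}=\mc{A}^{\#}_{\mc{M}\mc{N}}$: after your first step (using \eqref{wmpie4}) you reach $\mc{N}^{-1}\n\mc{A}^{H}\m(\mc{A}^{\dg}_{\mc{M}\mc{N}})^{H}\n\mc{A}^{H}\m\mc{M}$, and at this point you can already write the middle block as $(\mc{A}\n\mc{A}^{\dg}_{\mc{M}\mc{N}}\m\mc{A})^{H}$ and apply \eqref{wmpie1} directly. Inserting the \eqref{wmpie3} step as you describe leads instead to $\mc{N}^{-1}\n\mc{A}^{H}\m\mc{M}\m\mc{A}\n\mc{A}^{\dg}_{\mc{M}\mc{N}}$, where there is no visible $\mc{A}\n\mc{A}^{\dg}_{\mc{M}\mc{N}}\m\mc{A}$ to collapse; so your final sentence does not match what you actually have in hand. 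This is a cosmetic slip, not a gap: either drop the \eqref{wmpie3} detour, or observe (more in the spirit of Section~3) that $\mc{A}^{\dg}_{\mc{M}\mc{N}}\m\mc{A}\n\mc{A}^{\#}_{\mc{M}\mc{N}}=(\mc{A}^{\dg}_{\mc{M}\mc{N}}\m\mc{A})^{\#}_{\mc{N}\mc{N}}\n\mc{A}^{\#}_{\mc{M}\mc{N}}=(\mc{A}\n\mc{A}^{\dg}_{\mc{M}\mc{N}}\m\mc{A})^{\#}_{\mc{M}\mc{N}}=\mc{A}^{\#}_{\mc{M}\mc{N}}$ in one line.
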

In the last two Theorems \ref{wrol1} and  \ref{wrol2}, we need to check two conditions for the reverse-order law, however the next theorem needs only one condition to check.
\begin{theorem}
Let $\mc{A}\in {\C}^{I_{1}\times \cdots\times I_{M}\times J_{1}\times \cdots \times J_{N}}$ and  $\mc{B}\in {\C}^{J_{1}\times \cdots \times J_{N}\times K_{1}\times \cdots\times K_{L}}$. Also, let $\mc{M}\in\mc{C}^{I_{1}\times \cdots\times I_{M}\times I_{1}\times \cdots\times I_{M}}$, $\mc{N}\in \mc{C}^{J_{1}\times \cdots\times J_{N}\times J_{1}\times \cdots\times J_{N}}$ and  $\mc{L}\in \mc{C}^{K_{1}\times \cdots\times K_{L}\times K_{1}\times \cdots\times K_{L}}$ be Hermitian positive definite tensors. Then $(\mc{A}\n\mc{B})^{\dg}_{\mc{M}\mc{L}}=\mc{B}^{\dg}_{\mc{N}\mc{L}}\n\mc{A}^{\dg}_{\mc{M}\mc{N}}$ if and only if
\begin{equation*}\label{eq30}
\mc{A}^{\dg}_{\mc{M}\mc{N}} \m \mc{A} \n \mc{B} \lp \mc{B}^{\#}_{\mc{N}\mc{L}} \n\mc{A}^{\#}_{\mc{M}\mc{N}} \m \mc{A} \n \mc{B} \lp \mc{B}^{\dg}_{\mc{N}\mc{L}} = \mc{B} \lp \mc{B}^{\#}_{\mc{N}\mc{L}} \n\mc{A}^{\#}_{\mc{M}\mc{N}} \m \mc{A}.
\end{equation*}
\end{theorem}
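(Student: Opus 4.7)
The plan is to show that the single displayed condition is equivalent to the pair \eqref{role1}, \eqref{role2} of Theorem~\ref{wrol1}, and then to invoke Theorem~\ref{wrol1}. To streamline notation, abbreviate $P=\mc{A}^{\dg}_{\mc{M}\mc{N}}\m\mc{A}$, $P'=\mc{A}^{\#}_{\mc{M}\mc{N}}\m\mc{A}$, $X=\mc{B}\lp\mc{B}^{\#}_{\mc{N}\mc{L}}$, $Y=\mc{B}\lp\mc{B}^{\dg}_{\mc{N}\mc{L}}$; all four tensors live in $\C^{J_{1}\times\cdots\times J_{N}\times J_{1}\times\cdots\times J_{N}}$, and the displayed condition reads $(\star):\ P\n X\n P'\n Y=X\n P'$. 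Throughout I will use the idempotences $P\n P=P$ and $Y\n Y=Y$ (the first weighted Moore--Penrose axiom for $\mc{A}$ and $\mc{B}$), together with the self-weighted-conjugate-transpose identities $P^{\#}_{\mc{N}\mc{N}}=P$, $X^{\#}_{\mc{N}\mc{N}}=X$, $(P')^{\#}_{\mc{N}\mc{N}}=P'$, $Y^{\#}_{\mc{N}\mc{N}}=Y$ proved in Section~3.

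For the forward direction, assume the reverse-order law and invoke Theorem~\ref{wrol1} to extract \eqref{role1} and \eqref{role2}. Post-multiplying \eqref{role1} by $\mc{A}$ collapses $P\n X\n\mc{A}^{\#}_{\mc{M}\mc{N}}\m\mc{A}$ to $X\n\mc{A}^{\#}_{\mc{M}\mc{N}}\m\mc{A}$, i.e., $P\n X\n P'=X\n P'$. Taking the $\#_{\mc{N}\mc{L}}$-weighted conjugate transpose of \eqref{role2}, expanded via the reverse-order rule for $\#$ and the self-weighted identities of $P'$ and $Y$, gives $\mc{B}^{\#}_{\mc{N}\mc{L}}\n P'\n Y=\mc{B}^{\#}_{\mc{N}\mc{L}}\n P'$; pre-multiplying this by $\mc{B}$ via $\lp$ yields $X\n P'\n Y=X\n P'$. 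Composing the two simplifications on the left-hand side of $(\star)$ turns it into its right-hand side.

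For the converse, assume $(\star)$ and aim to recover \eqref{role1} and \eqref{role2}. Pre-multiplying $(\star)$ by $P$ and using $P\n P=P$, then comparing with $(\star)$ itself, produces $(\mc{I}-P)\n X\n P'=\mc{O}$, whose $\#_{\mc{N}\mc{N}}$-weighted conjugate transpose is $P'\n X\n(\mc{I}-P)=\mc{O}$; the dual post-multiplication of $(\star)$ by $Y$ gives $X\n P'\n(\mc{I}-Y)=\mc{O}$ and hence $(\mc{I}-Y)\n P'\n X=\mc{O}$. The key step is to introduce the auxiliary tensor $\mc{U}:=(\mc{I}-P)\n X\n\mc{A}^{\#}_{\mc{M}\mc{N}}$, which sits in $\C^{J_{1}\times\cdots\times J_{N}\times I_{1}\times\cdots\times I_{M}}$; using $(\mc{A}^{\#}_{\mc{M}\mc{N}})^{\#}_{\mc{N}\mc{M}}=\mc{A}$ together with the self-weighted identities I compute $\mc{U}^{\#}_{\mc{N}\mc{M}}=\mc{A}\n X\n(\mc{I}-P)$, whence $\mc{U}\m\mc{U}^{\#}_{\mc{N}\mc{M}}=(\mc{I}-P)\n X\n\bigl[P'\n X\n(\mc{I}-P)\bigr]=\mc{O}$. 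The null-product theorem of Section~3 then forces $\mc{U}=\mc{O}$, which is precisely \eqref{role1}. The parallel construction $\mc{V}:=(\mc{I}-Y)\n P'\n\mc{B}$ satisfies $\mc{V}^{\#}_{\mc{N}\mc{L}}=\mc{B}^{\#}_{\mc{N}\mc{L}}\n P'\n(\mc{I}-Y)$ and $\mc{V}\lp\mc{V}^{\#}_{\mc{N}\mc{L}}=\bigl[(\mc{I}-Y)\n P'\n X\bigr]\n P'\n(\mc{I}-Y)=\mc{O}$, so the same theorem yields $\mc{V}=\mc{O}$, i.e., \eqref{role2}. Theorem~\ref{wrol1} then delivers the reverse-order law.

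The main obstacle is the converse: pre/post-multiplying $(\star)$ by $P$ or $Y$ provides only the ``fat'' vanishings involving the full factors $P'$ and $X$, whereas \eqref{role1} and \eqref{role2} demand the ``slim'' factors $\mc{A}^{\#}_{\mc{M}\mc{N}}$ and $\mc{B}$. The right-cancellation Lemma~\ref{rct1} and left-cancellation Lemma~\ref{lct1} are both useless here because the potentially cancellable piece sits on the wrong side of the $\mc{A}$ or $\mc{B}$. The resolution is to package the desired slim identities into the auxiliary tensors $\mc{U}$ and $\mc{V}$ and invoke the null-product theorem of Section~3 in place of any cancellation lemma.
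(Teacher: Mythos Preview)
The paper states this theorem without proof, so there is no in-paper argument to compare against. Your strategy---show that the single condition $(\star)$ is equivalent to the pair \eqref{role1}, \eqref{role2} and then invoke Theorem~\ref{wrol1}---is the natural one and your execution is correct. The forward direction is immediate from post-multiplying \eqref{role1} by $\mc{A}$ and pre-multiplying the $\#$-transpose of \eqref{role2} by $\mc{B}$, exactly as you do. For the converse, the device of packaging the ``slim'' identities into $\mc{U}=(\mc{I}-P)\n X\n\mc{A}^{\#}_{\mc{M}\mc{N}}$ and $\mc{V}=(\mc{I}-Y)\n P'\n\mc{B}$ and killing them via the null-product theorem is clean and avoids the cancellation lemmas, which, as you observe, do not apply directly here.

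One small remark: the identities $X^{\#}_{\mc{N}\mc{N}}=X$ and $(P')^{\#}_{\mc{N}\mc{N}}=P'$ are not among the results explicitly stated in Section~3 (those concern $\mc{A}\n\mc{A}^{\dg}_{\mc{M}\mc{N}}$ and $\mc{A}^{\dg}_{\mc{M}\mc{N}}\m\mc{A}$, not the $\#$-analogues). They do, however, follow in one line from the reverse-order rule for $\#$ together with $(\mc{A}^{\#}_{\mc{M}\mc{N}})^{\#}_{\mc{N}\mc{M}}=\mc{A}$, so you may want to make that derivation explicit rather than attribute it to Section~3.
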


We next present another characterization of the reverse-order law.

\begin{theorem}\label{theorem4}
Let $\mc{A}\in {\C}^{I_{1}\times \cdots\times I_{M}\times J_{1}\times \cdots \times J_{N}}$ and  $\mc{B}\in {\C}^{J_{1}\times \cdots \times J_{N}\times K_{1}\times \cdots\times K_{L}}$. Also, let $\mc{M}\in\mc{C}^{I_{1}\times \cdots\times I_{M}\times I_{1}\times \cdots\times I_{M}}$, $\mc{N}\in \mc{C}^{J_{1}\times \cdots\times J_{N}\times J_{1}\times \cdots\times J_{N}}$ and  $\mc{L}\in \mc{C}^{K_{1}\times \cdots\times K_{L}\times K_{1}\times \cdots\times K_{L}}$ be Hermitian positive definite tensors.  Then $(\mc{A}\n\mc{B})^{\dg}_{\mc{M}\mc{L}} =\mc{B}^{\dg}_{\mc{N}\mc{L}} \n \mc{A}^{\dg}_{\mc{M}\mc{N}}$ if and only if 
\begin{equation*}\label{eq35}
\mc{A}^{\dg}_{\mc{M}\mc{N}} \m \mc{A} \n \mc{B} = \mc{B}\lp(\mc{A}\n\mc{B})^{\dg}_{\mc{M}\mc{L}} \m \mc{A} \n \mc{B}
\end{equation*}
and
\begin{equation*}\label{eq36}
\mc{B} \lp \mc{B}^{\dg}_{\mc{N}\mc{L}} \n \mc{A}^{\#}_{\mc{M}\mc{N}} = \mc{A}^{\#}_{\mc{M}\mc{N}} \m\mc{A}\n\mc{B} \lp (\mc{A} \n \mc{B})^{\dg}_{\mc{M}\mc{L}}.
\end{equation*}
\end{theorem}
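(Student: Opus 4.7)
The plan is to prove the two implications separately, building on Theorem \ref{wrol1} and the defining equations of the weighted Moore--Penrose inverse from Definition \ref{wmpi}.

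For the necessity direction, I would assume $(\mc{A}\n\mc{B})^{\dg}_{\mc{M}\mc{L}}=\mc{B}^{\dg}_{\mc{N}\mc{L}}\n\mc{A}^{\dg}_{\mc{M}\mc{N}}$ and substitute this into the right-hand sides of the two stated conditions, reducing them to
$\mc{A}^{\dg}_{\mc{M}\mc{N}}\m\mc{A}\n\mc{B}=\mc{B}\lp\mc{B}^{\dg}_{\mc{N}\mc{L}}\n\mc{A}^{\dg}_{\mc{M}\mc{N}}\m\mc{A}\n\mc{B}$ and $\mc{B}\lp\mc{B}^{\dg}_{\mc{N}\mc{L}}\n\mc{A}^{\#}_{\mc{M}\mc{N}}=\mc{A}^{\#}_{\mc{M}\mc{N}}\m\mc{A}\n\mc{B}\lp\mc{B}^{\dg}_{\mc{N}\mc{L}}\n\mc{A}^{\dg}_{\mc{M}\mc{N}}$. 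The first of these is a strengthening of \eqref{role2} in Theorem \ref{wrol1} (which is stated in terms of $\mc{A}^{\#}_{\mc{M}\mc{N}}$ rather than $\mc{A}^{\dg}_{\mc{M}\mc{N}}$); I would pass between the two formulations using that $\mc{A}^{\dg}_{\mc{M}\mc{N}}\m\mc{A}$ and $\mc{A}^{\#}_{\mc{M}\mc{N}}\m\mc{A}$ share the same range $R(\mc{A}^{\dg}_{\mc{M}\mc{N}})$ together with the absorption property of Lemma \ref{ir}. The second identity follows from a dual argument based on \eqref{role1} and Lemma \ref{irt2}.

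For the sufficiency direction, I would assume (a) and (b) and verify that $\mc{Y}:=\mc{B}^{\dg}_{\mc{N}\mc{L}}\n\mc{A}^{\dg}_{\mc{M}\mc{N}}$ satisfies the four equations of Definition \ref{wmpi} for $\mc{A}\n\mc{B}$. For \eqref{wmpie1}, substituting $\mc{A}^{\dg}_{\mc{M}\mc{N}}\m\mc{A}\n\mc{B}$ inside $\mc{A}\n\mc{B}\lp\mc{Y}\m\mc{A}\n\mc{B}$ via (a) and collapsing using $\mc{B}\lp\mc{B}^{\dg}_{\mc{N}\mc{L}}\n\mc{B}=\mc{B}$ followed by $\mc{A}\n\mc{B}\lp(\mc{A}\n\mc{B})^{\dg}_{\mc{M}\mc{L}}\m\mc{A}\n\mc{B}=\mc{A}\n\mc{B}$ gives \eqref{wmpie1}. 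Equation \eqref{wmpie2} is established similarly, using (a) to rewrite $\mc{A}^{\dg}_{\mc{M}\mc{N}}\m\mc{A}\n\mc{B}$ and (b) to rewrite $\mc{B}\lp\mc{B}^{\dg}_{\mc{N}\mc{L}}\n\mc{A}^{\dg}_{\mc{M}\mc{N}}$, then reducing by the defining equations for $\mc{A}^{\dg}_{\mc{M}\mc{N}}$ and $\mc{B}^{\dg}_{\mc{N}\mc{L}}$. For the Hermitian conditions \eqref{wmpie3} and \eqref{wmpie4}, I would rewrite $\mc{M}\m\mc{A}\n\mc{B}\lp\mc{Y}$ and $\mc{L}\lp\mc{Y}\m\mc{A}\n\mc{B}$ using (b) and (a), respectively, and invoke the Hermicity of $\mc{M}\m\mc{A}\n\mc{B}\lp(\mc{A}\n\mc{B})^{\dg}_{\mc{M}\mc{L}}$ and $\mc{L}\lp(\mc{A}\n\mc{B})^{\dg}_{\mc{M}\mc{L}}\m\mc{A}\n\mc{B}$.

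The principal obstacle is the asymmetric use of $\mc{A}^{\dg}_{\mc{M}\mc{N}}$ in condition (a) and $\mc{A}^{\#}_{\mc{M}\mc{N}}$ in condition (b); bridging them is what prevents a direct appeal to Theorem \ref{wrol1}. Overcoming this relies on the identities $\mc{A}^{\#}_{\mc{M}\mc{N}}\m\mc{A}\n\mc{A}^{\dg}_{\mc{M}\mc{N}}=\mc{A}^{\#}_{\mc{M}\mc{N}}$ and $\mc{A}^{\dg}_{\mc{M}\mc{N}}\m\mc{A}\n\mc{A}^{\#}_{\mc{M}\mc{N}}\m\mc{A}=\mc{A}^{\#}_{\mc{M}\mc{N}}\m\mc{A}$, both following from the Hermicity equations in Definition \ref{wmpi} applied to $\mc{A}$, which allow moving freely between the $\dg$ and $\#$ formulations whenever a factor of $\mc{A}$ is available on the appropriate side.
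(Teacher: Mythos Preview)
The paper does not supply a proof of Theorem~\ref{theorem4}; it is stated without argument. So there is nothing to compare your approach against, and the question reduces to whether your plan actually goes through.

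Your necessity direction is fine and is essentially the intended reduction to Theorem~\ref{wrol1}. The sufficiency direction, however, has a soft spot at equation~\eqref{wmpie2}. You propose to use (b) to rewrite $\mc{B}\lp\mc{B}^{\dg}_{\mc{N}\mc{L}}\n\mc{A}^{\dg}_{\mc{M}\mc{N}}$, but (b) speaks of $\mc{B}\lp\mc{B}^{\dg}_{\mc{N}\mc{L}}\n\mc{A}^{\#}_{\mc{M}\mc{N}}$, and the bridging identities you cite, $\mc{A}^{\#}_{\mc{M}\mc{N}}\m\mc{A}\n\mc{A}^{\dg}_{\mc{M}\mc{N}}=\mc{A}^{\#}_{\mc{M}\mc{N}}$ and $\mc{A}^{\dg}_{\mc{M}\mc{N}}\m\mc{A}\n\mc{A}^{\#}_{\mc{M}\mc{N}}\m\mc{A}=\mc{A}^{\#}_{\mc{M}\mc{N}}\m\mc{A}$, only let you pass from $\dg$ to $\#$ when there is a factor of $\mc{A}$ sitting on the appropriate side; at the right end of $\mc{Y}=\mc{B}^{\dg}_{\mc{N}\mc{L}}\n\mc{A}^{\dg}_{\mc{M}\mc{N}}$ there is none. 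Indeed, if you carry your manipulation through, what drops out is $\mc{Y}\m\mc{A}\n\mc{B}\lp\mc{Y}=(\mc{A}\n\mc{B})^{\dg}_{\mc{M}\mc{L}}$, not $\mc{Y}\m\mc{A}\n\mc{B}\lp\mc{Y}=\mc{Y}$, which presupposes the conclusion. The clean fix is to avoid verifying the four Penrose equations directly and instead reduce to Theorem~\ref{wrol1}: post-multiplying (a) by $(\mc{A}\n\mc{B})^{\#}_{\mc{M}\mc{L}}=\mc{B}^{\#}_{\mc{N}\mc{L}}\n\mc{A}^{\#}_{\mc{M}\mc{N}}$ and using $(\mc{A}\n\mc{B})^{\dg}_{\mc{M}\mc{L}}\m(\mc{A}\n\mc{B})\lp(\mc{A}\n\mc{B})^{\#}_{\mc{M}\mc{L}}=(\mc{A}\n\mc{B})^{\#}_{\mc{M}\mc{L}}$ recovers \eqref{role1}, while post-multiplying (b) by $\mc{A}\n\mc{B}$ immediately gives \eqref{role2}. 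Then Theorem~\ref{wrol1} finishes the sufficiency in one stroke, and equation~\eqref{wmpie2} never has to be checked by hand.
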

Next theorem replaces the first condition of Theorem \ref{wrol1} by a new condition.
\begin{theorem}
Let $\mc{A}\in {\C}^{I_{1}\times \cdots\times I_{M}\times J_{1}\times \cdots \times J_{N}}$ and  $\mc{B}\in {\C}^{J_{1}\times \cdots \times J_{N}\times K_{1}\times \cdots\times K_{L}}$. Also, let $\mc{M}\in\mc{C}^{I_{1}\times \cdots\times I_{M}\times I_{1}\times \cdots\times I_{M}}$, $\mc{N}\in \mc{C}^{J_{1}\times \cdots\times J_{N}\times J_{1}\times \cdots\times J_{N}}$ and  $\mc{L}\in \mc{C}^{K_{1}\times \cdots\times K_{L}\times K_{1}\times \cdots\times K_{L}}$ be Hermitian positive definite tensors.  Then $(\mc{A}\n\mc{B})^{\dg}_{\mc{M}\mc{L}} =\mc{B}^{\dg}_{\mc{N}\mc{L}} \n \mc{A}^{\dg}_{\mc{M}\mc{N}}$ if and only if 
\begin{itemize}
    \item[(i)] $(\mc{L}\lp\mc{B}^{\dg}_{\mc{N}\mc{L}}\n\mc{A}^{\dg}_{\mc{M}\mc{N}}\m\mc{A}\n\mc{B})^{H}=\mc{L}\lp\mc{B}^{\dg}_{\mc{N}\mc{L}}\n\mc{A}^{\dg}_{\mc{M}\mc{N}}\m\mc{A}\n\mc{B}$,
    \item[(ii)] $\mc{B}\lp\mc{B}^{\dg}_{\mc{N}\mc{L}}\n\mc{A}^{\#}_{\mc{M}\mc{N}}\m\mc{A}\n\mc{B}=\mc{A}^{\#}_{\mc{M}\mc{N}}\m\mc{A}\n\mc{B}$.
\end{itemize}
\end{theorem}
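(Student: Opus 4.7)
The plan is to handle necessity and sufficiency separately, using Theorem \ref{wrol1} as a bridge in both directions.

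For necessity, assume the reverse-order law $(\mc{A}\n\mc{B})^{\dg}_{\mc{M}\mc{L}}=\mc{B}^{\dg}_{\mc{N}\mc{L}}\n\mc{A}^{\dg}_{\mc{M}\mc{N}}$. Then (ii) is literally condition \eqref{role2} of Theorem \ref{wrol1} and so holds immediately. Condition (i) follows by substituting the reverse-order law into the left-hand side:
\begin{equation*}
\mc{L}\lp\mc{B}^{\dg}_{\mc{N}\mc{L}}\n\mc{A}^{\dg}_{\mc{M}\mc{N}}\m\mc{A}\n\mc{B}
=\mc{L}\lp(\mc{A}\n\mc{B})^{\dg}_{\mc{M}\mc{L}}\m(\mc{A}\n\mc{B}),
\end{equation*}
which is Hermitian by the defining equation \eqref{wmpie4} applied to the weighted Moore--Penrose inverse of $\mc{A}\n\mc{B}$.

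For sufficiency, assume (i) and (ii). Since (ii) already matches \eqref{role2}, it suffices to establish \eqref{role1} of Theorem \ref{wrol1}. The first step is to observe that (ii), combined with the lemma stated directly after Lemma \ref{lm3.13}, produces the commutativity $\mc{B}\lp\mc{B}^{\dg}_{\mc{N}\mc{L}}\n\mc{A}^{\#}_{\mc{M}\mc{N}}\m\mc{A}=\mc{A}^{\#}_{\mc{M}\mc{N}}\m\mc{A}\n\mc{B}\lp\mc{B}^{\dg}_{\mc{N}\mc{L}}$. The second step is to take the Hermitian conjugate of the Hermitian identity (i) and convert the resulting Hermitian adjoints of $\mc{A}^{\dg}_{\mc{M}\mc{N}}$ and $\mc{B}^{\dg}_{\mc{N}\mc{L}}$ back into weighted forms by means of $\mc{A}^H\m(\mc{A}^{\dg}_{\mc{M}\mc{N}})^H=\mc{N}\n\mc{A}^{\dg}_{\mc{M}\mc{N}}\m\mc{A}\n\mc{N}^{-1}$ (from \eqref{wmpie4} for $\mc{A}^{\dg}_{\mc{M}\mc{N}}$) together with $(\mc{B}^{\dg}_{\mc{N}\mc{L}})^H\lp\mc{B}^H=\mc{N}\n\mc{B}\lp\mc{B}^{\dg}_{\mc{N}\mc{L}}\n\mc{N}^{-1}$ (from \eqref{wmpie3} for $\mc{B}^{\dg}_{\mc{N}\mc{L}}$). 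In conjunction with $\mc{A}^{\#}_{\mc{M}\mc{N}}=\mc{N}^{-1}\n\mc{A}^H\m\mc{M}$ and $\mc{B}^{\#}_{\mc{N}\mc{L}}=\mc{L}^{-1}\lp\mc{B}^H\n\mc{N}$, these substitutions recast (i) purely in terms of $\mc{A}^{\#}_{\mc{M}\mc{N}}$, $\mc{B}^{\#}_{\mc{N}\mc{L}}$ and the self-adjoint projectors $\mc{A}^{\dg}_{\mc{M}\mc{N}}\m\mc{A}$ and $\mc{B}\lp\mc{B}^{\dg}_{\mc{N}\mc{L}}$. Feeding the commutativity from the first step into this recast equation and then post-multiplying by an appropriate $\#$-transpose (paralleling the manipulations performed on equation \eqref{eq22} in the converse part of the proof of Theorem \ref{wrol1}) isolates \eqref{role1}, and Theorem \ref{wrol1} then delivers the reverse-order law.

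The main obstacle lies in this middle passage: condition (i) speaks the $\dg$-inverse language, whereas \eqref{role1} and (ii) live in the $\#$-transpose world. The translation demands repeated substitution of the defining MP equations \eqref{wmpie3} and \eqref{wmpie4} for both $\mc{A}^{\dg}_{\mc{M}\mc{N}}$ and $\mc{B}^{\dg}_{\mc{N}\mc{L}}$, together with careful tracking of which quantity is self-adjoint with respect to which weight tensor, so that the intermediate cancellations terminate precisely in the form of \eqref{role1} rather than in some equivalent but syntactically different expression.
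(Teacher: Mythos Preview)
The paper does not include a proof of this theorem; it is stated without proof immediately after Theorem~\ref{theorem4}. So there is no ``paper's own proof'' to compare against, and your proposal must be assessed on its own merits.

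Your necessity direction is clean and correct: once $(\mc{A}\n\mc{B})^{\dg}_{\mc{M}\mc{L}}=\mc{B}^{\dg}_{\mc{N}\mc{L}}\n\mc{A}^{\dg}_{\mc{M}\mc{N}}$ is assumed, (ii) is literally \eqref{role2} of Theorem~\ref{wrol1}, and (i) is exactly the fourth defining relation \eqref{wmpie4} for $(\mc{A}\n\mc{B})^{\dg}_{\mc{M}\mc{L}}$.

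The sufficiency direction, however, has a genuine gap. Your plan is to derive \eqref{role1} from (i) and (ii) and then invoke Theorem~\ref{wrol1}. The first step (using the companion lemma to Lemma~\ref{lm3.13} to get that $\mc{B}\lp\mc{B}^{\dg}_{\mc{N}\mc{L}}$ commutes with $\mc{A}^{\#}_{\mc{M}\mc{N}}\m\mc{A}$) is fine. But the second step---rewriting (i) via the substitutions you list and then ``feeding the commutativity \dots\ and post-multiplying by an appropriate $\#$-transpose'' to land on \eqref{role1}---is never actually carried out. Your own final paragraph concedes that this is ``the main obstacle'' and that the intermediate cancellations must ``terminate precisely in the form of \eqref{role1}'', without showing that they do. Note in particular that the commutativity you obtain involves $\mc{A}^{\#}_{\mc{M}\mc{N}}\m\mc{A}$, whereas (i) and \eqref{role1} involve $\mc{A}^{\dg}_{\mc{M}\mc{N}}\m\mc{A}$; bridging between these two projectors is exactly the nontrivial content, and nothing in your sketch explains which specific post-multiplication makes the passage from your recast version of (i) to \eqref{role1} go through. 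As written, this is an outline with a hole at the decisive point rather than a proof.

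A more robust route for sufficiency is to bypass \eqref{role1} altogether and verify the four defining relations for $\mc{Y}=\mc{B}^{\dg}_{\mc{N}\mc{L}}\n\mc{A}^{\dg}_{\mc{M}\mc{N}}$ directly: (i) is already \eqref{wmpie4}; taking the $(\cdot)^{\#}_{\mc{N}\mc{L}}$-transpose of (ii) gives \eqref{eq22}, and then the same manipulation as in the converse of Theorem~\ref{wrol1} yields \eqref{eq23}, hence \eqref{wmpie3} and \eqref{wmpie1}. The remaining relation \eqref{wmpie2} is then the only nontrivial one, and it is there that (i) must be combined with (ii); isolating that single computation would make the argument complete.
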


Similarly, we replace the second condition of Theorem \ref{wrol1} by a new condition.
\begin{theorem}
Let $\mc{A}\in {\C}^{I_{1}\times \cdots\times I_{M}\times J_{1}\times \cdots \times J_{N}}$ and  $\mc{B}\in {\C}^{J_{1}\times \cdots \times J_{N}\times K_{1}\times \cdots\times K_{L}}$. Also, let $\mc{M}\in\mc{C}^{I_{1}\times \cdots\times I_{M}\times I_{1}\times \cdots\times I_{M}}$, $\mc{N}\in \mc{C}^{J_{1}\times \cdots\times J_{N}\times J_{1}\times \cdots\times J_{N}}$ and  $\mc{L}\in \mc{C}^{K_{1}\times \cdots\times K_{L}\times K_{1}\times \cdots\times K_{L}}$ be Hermitian positive definite tensors.  Then $(\mc{A}\n\mc{B})^{\dg}_{\mc{M}\mc{L}} =\mc{B}^{\dg}_{\mc{N}\mc{L}} \n \mc{A}^{\dg}_{\mc{M}\mc{N}}$ if and only if 
\begin{itemize}
    \item[(i)] $\mc{A}^{\dg}_{\mc{M}\mc{N}}\m\mc{A}\n\mc{B}\lp\mc{B}^{\#}_{\mc{N}\mc{L}}\n\mc{A}^{\#}_{\mc{M}\mc{N}}=\mc{B}\lp\mc{B}^{\#}_{\mc{N}\mc{L}}\n\mc{A}^{\#}_{\mc{M}\mc{N}}$,
    \item[(ii)] $(\mc{M}\m\mc{A}\n\mc{B}\lp\mc{B}^{\dg}_{\mc{N}\mc{L}}\n\mc{A}^{\dg}_{\mc{M}\mc{N}})^{H}=\mc{M}\m\mc{A}\n\mc{B}\lp\mc{B}^{\dg}_{\mc{N}\mc{L}}\n\mc{A}^{\dg}_{\mc{M}\mc{N}}$.
\end{itemize}
\end{theorem}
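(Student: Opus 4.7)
The plan is to reduce both directions to Theorem~\ref{wrol1}. Since condition (i) is literally~\eqref{role1}, the entire theorem reduces to showing that, given (i), condition (ii) is equivalent to~\eqref{role2}.

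For the forward direction, I would assume $(\mc{A}\n\mc{B})^{\dg}_{\mc{M}\mc{L}}=\mc{B}^{\dg}_{\mc{N}\mc{L}}\n\mc{A}^{\dg}_{\mc{M}\mc{N}}$. Theorem~\ref{wrol1} immediately yields (i). For (ii), substituting the hypothesised equality into the left-hand side rewrites $\mc{M}\m\mc{A}\n\mc{B}\lp\mc{B}^{\dg}_{\mc{N}\mc{L}}\n\mc{A}^{\dg}_{\mc{M}\mc{N}}$ as $\mc{M}\m(\mc{A}\n\mc{B})\lp(\mc{A}\n\mc{B})^{\dg}_{\mc{M}\mc{L}}$, which is Hermitian by equation~\eqref{wmpie3} in Definition~\ref{wmpi} applied to the tensor $\mc{A}\n\mc{B}$.

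For the reverse direction, I would assume (i) and (ii) and work to derive~\eqref{role2}. The strategy is first to expand the $H$-conjugate in (ii), then to convert the factors $\mc{A}^H\m\mc{M}$ and $\mc{B}^H\n\mc{N}$ into weighted conjugate transposes via Definition~\ref{wctd}, and then to invoke the third Moore--Penrose relation for $\mc{B}$, namely $(\mc{B}^{\dg}_{\mc{N}\mc{L}})^H\lp\mc{B}^H\n\mc{N}=\mc{N}\n\mc{B}\lp\mc{B}^{\dg}_{\mc{N}\mc{L}}$, together with the fourth Moore--Penrose relation $\mc{A}^H\m(\mc{A}^{\dg}_{\mc{M}\mc{N}})^H=\mc{N}\n\mc{A}^{\dg}_{\mc{M}\mc{N}}\m\mc{A}\n\mc{N}^{-1}$ for $\mc{A}$. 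Pre-multiplying the rewritten identity by $\mc{A}^{\#}_{\mc{M}\mc{N}}\m\mc{M}^{-1}$ should produce the intermediate relation $\mc{A}^{\dg}_{\mc{M}\mc{N}}\m\mc{A}\n\mc{B}\lp\mc{B}^{\dg}_{\mc{N}\mc{L}}\n\mc{A}^{\#}_{\mc{M}\mc{N}}=\mc{A}^{\#}_{\mc{M}\mc{N}}\m\mc{A}\n\mc{B}\lp\mc{B}^{\dg}_{\mc{N}\mc{L}}\n\mc{A}^{\dg}_{\mc{M}\mc{N}}$. Post-multiplying this by $\mc{A}\n\mc{B}$ and appealing to $\mc{A}\n\mc{A}^{\dg}_{\mc{M}\mc{N}}\m\mc{A}=\mc{A}$, $\mc{B}\lp\mc{B}^{\dg}_{\mc{N}\mc{L}}\n\mc{B}=\mc{B}$, condition (i), and the cancellation property in Lemma~\ref{lct1} should then produce~\eqref{role2}, at which point Theorem~\ref{wrol1} delivers the reverse-order law.

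The main obstacle is the reverse direction: unwinding (ii) into a form that can be productively combined with (i) demands careful tracking of several weighted conjugate transpose manipulations and the associated Einstein-product contractions, and the concluding absorption step depends on a delicate interplay between the two hypotheses together with the idempotency of $\mc{B}\lp\mc{B}^{\dg}_{\mc{N}\mc{L}}$.
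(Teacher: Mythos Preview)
The paper gives no explicit proof here; the sentence ``Similarly, we replace the second condition of Theorem~\ref{wrol1} by a new condition'' signals that the intended argument is to re-run the converse part of the proof of Theorem~\ref{wrol1}. In that proof, condition~\eqref{role2} is used \emph{only} to derive~\eqref{eq23}, and~\eqref{eq23} is used \emph{only} to verify the third Penrose equation~\eqref{wmpie3} for $\mc{X}=\mc{A}\n\mc{B}$ and $\mc{Y}=\mc{B}^{\dg}_{\mc{N}\mc{L}}\n\mc{A}^{\dg}_{\mc{M}\mc{N}}$. But that third Penrose equation is precisely condition~(ii). So the converse direction follows by copying the proof of Theorem~\ref{wrol1} verbatim for~\eqref{wmpie1},~\eqref{wmpie2},~\eqref{wmpie4} (all of which use only~(i)$=$\eqref{role1}), and invoking~(ii) directly for~\eqref{wmpie3}.

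Your forward direction is correct and matches this. Your reverse direction, however, takes an unnecessary detour: rather than verifying the four Penrose equations, you try to recover~\eqref{role2} from~(i) and~(ii) and then apply Theorem~\ref{wrol1} as a black box. Your intermediate identity
\[
\mc{A}^{\dg}_{\mc{M}\mc{N}}\m\mc{A}\n\mc{B}\lp\mc{B}^{\dg}_{\mc{N}\mc{L}}\n\mc{A}^{\#}_{\mc{M}\mc{N}}
=\mc{A}^{\#}_{\mc{M}\mc{N}}\m\mc{A}\n\mc{B}\lp\mc{B}^{\dg}_{\mc{N}\mc{L}}\n\mc{A}^{\dg}_{\mc{M}\mc{N}}
\]
does indeed follow from~(ii) by the manipulations you describe. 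The gap is the last step: after post-multiplying by $\mc{A}\n\mc{B}$ the two sides carry different left prefixes ($\mc{A}^{\dg}_{\mc{M}\mc{N}}\m\mc{A}$ versus $\mc{A}^{\#}_{\mc{M}\mc{N}}\m\mc{A}$), so Lemma~\ref{lct1} does not apply, and the simplifications you list ($\mc{A}\n\mc{A}^{\dg}_{\mc{M}\mc{N}}\m\mc{A}=\mc{A}$, $\mc{B}\lp\mc{B}^{\dg}_{\mc{N}\mc{L}}\n\mc{B}=\mc{B}$, condition~(i)) do not combine to produce~\eqref{role2} without an additional idea. The clean route is the one above: drop the attempt to reach~\eqref{role2} and instead observe that~(ii) \emph{is} the third Penrose equation, so together with~(i) all four equations in Definition~\ref{wmpi} are satisfied.
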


It is interesting to note that the conditions   $\mc{M}\m\mc{A}\n\mc{B}\lp\mc{B}^{\dg}_{\mc{N}\mc{L}}\n\mc{A}^{\dg}_{\mc{M}\mc{N}}$ and $\mc{N}\lp\mc{B}^{\dg}_{\mc{N}\mc{L}}*_{N}\mc{A}^{\dg}_{\mc{M}\mc{N}}\m\mc{A}\n\mc{B}$ being Hermitian are weaker conditions than are $\mc{B}\m\mc{B}^{\dg}_{\mc{N}\mc{L}}\n\mc{A}^{\#}_{\mc{M}\mc{N}}\m\mc{A}\n\mc{B}=\mc{A}^{\#}_{\mc{M}\mc{N}}\m\mc{A}\n\mc{B}$ and $\mc{A}^{\dg}_{\mc{M}\mc{N}}\m\mc{A}\n\mc{B}\m\mc{B}^{\#}_{\mc{N}\mc{L}}\n\mc{A}^{\#}_{\mc{M}\mc{N}}=\mc{B}\m\mc{B}^{\#}_{\mc{N}\mc{L}}\n\mc{A}^{\#}_{\mc{M}\mc{N}}$. The last result of this paper shows that the reverse-order law is a sufficient condition for the commutativity of $\mc{A}^{\dg}_{\mc{M}\mc{N}}\m\mc{A}$ and $\mc{B}\lp\mc{B}^{\dg}_{\mc{N}\mc{L}}$.

\begin{theorem}
Let $\mc{A}\in {\C}^{I_{1}\times \cdots\times I_{M}\times J_{1}\times \cdots \times J_{N}}$ and  $\mc{B}\in {\C}^{J_{1}\times \cdots \times J_{N}\times K_{1}\times \cdots\times K_{L}}$. Also, let $\mc{M},~\mc{N},$ $\mc{L}$ be Hermitian positive definite tensors in $\mc{C}^{I_{1}\times I_{2}\times \cdots\times I_{M}\times I_{1}\times I_{2}\times \cdots\times I_{M}}$, $\mc{C}^{J_{1}\times J_{2}\times \cdots\times J_{N}\times J_{1}\times J_{2}\times \cdots\times J_{N}}$ and $\mc{C}^{K_{1}\times \cdots\times K_{L}\times K_{1}\times \cdots\times K_{L}}$, respectively.  If $(\mc{A}\n\mc{B})^{\dg}_{\mc{M}\mc{L}} =\mc{B}^{\dg}_{\mc{N}\mc{L}} \n \mc{A}^{\dg}_{\mc{M}\mc{N}}$, then  $\mc{A}^{\dg}_{\mc{M}\mc{N}} \m \mc{A} $ 
and $\mc{B} \lp \mc{B}^{\dg}_{\mc{N}\mc{L}} $ commute.
\end{theorem}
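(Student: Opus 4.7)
Set $\mc{P} := \mc{A}^{\dg}_{\mc{M}\mc{N}}\m\mc{A}$ and $\mc{Q} := \mc{B}\lp\mc{B}^{\dg}_{\mc{N}\mc{L}}$; both live in $\mc{C}^{J_{1}\times\cdots\times J_{N}\times J_{1}\times\cdots\times J_{N}}$. The defining identities \eqref{wmpie3}--\eqref{wmpie4} of $\mc{A}^{\dg}_{\mc{M}\mc{N}}$ and $\mc{B}^{\dg}_{\mc{N}\mc{L}}$ say that both $\mc{N}\n\mc{P}$ and $\mc{N}\n\mc{Q}$ are Hermitian, so $\mc{P}^{H} = \mc{N}\n\mc{P}\n\mc{N}^{-1}$ and similarly for $\mc{Q}$. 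A direct computation then yields $(\mc{N}\n\mc{P}\n\mc{Q})^{H} = \mc{N}\n\mc{Q}\n\mc{P}$; in particular, $\mc{P}\n\mc{Q} = \mc{Q}\n\mc{P}$ is equivalent to $\mc{N}\n\mc{P}\n\mc{Q}$ being Hermitian, and the plan is to establish this.

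I start by exploiting the reverse-order law through Theorem \ref{wrol1}, which provides \eqref{role2}. The unlabeled lemma immediately following Lemma \ref{lm3.13} upgrades \eqref{role2} to the commutativity of $\mc{Q}$ with $\mc{A}^{\#}_{\mc{M}\mc{N}}\m\mc{A}$. In parallel, using $\mc{A}\n\mc{P}=\mc{A}$ together with $\mc{P}\n\mc{N}^{-1} = \mc{N}^{-1}\n\mc{P}^{H}$, I obtain
\[
\mc{P}\n\mc{A}^{\#}_{\mc{M}\mc{N}} = \mc{P}\n\mc{N}^{-1}\n\mc{A}^{H}\m\mc{M} = \mc{N}^{-1}\n(\mc{A}\n\mc{P})^{H}\m\mc{M} = \mc{A}^{\#}_{\mc{M}\mc{N}},
\]
and trivially $\mc{A}^{\#}_{\mc{M}\mc{N}}\m\mc{A}\n\mc{P} = \mc{A}^{\#}_{\mc{M}\mc{N}}\m\mc{A}$. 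Chaining these three facts,
\[
\mc{P}\n\mc{Q}\n\mc{A}^{\#}_{\mc{M}\mc{N}}\m\mc{A}
 = \mc{P}\n\mc{A}^{\#}_{\mc{M}\mc{N}}\m\mc{A}\n\mc{Q}
 = \mc{A}^{\#}_{\mc{M}\mc{N}}\m\mc{A}\n\mc{Q}
 = \mc{Q}\n\mc{A}^{\#}_{\mc{M}\mc{N}}\m\mc{A},
\]
so $(\mc{P}\n\mc{Q}-\mc{Q})\n\mc{A}^{\#}_{\mc{M}\mc{N}}\m\mc{A} = \mc{O}$.

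Substituting $\mc{A}^{\#}_{\mc{M}\mc{N}} = \mc{N}^{-1}\n\mc{A}^{H}\m\mc{M}$, taking the conjugate transpose, and using the Hermicity relations to simplify $\mc{N}^{-1}\n(\mc{P}\n\mc{Q}-\mc{Q})^{H} = (\mc{Q}\n\mc{P}-\mc{Q})\n\mc{N}^{-1}$ transforms the previous equation into
\[
\mc{A}^{H}\m\mc{M}\m\mc{A}\n(\mc{Q}\n\mc{P}-\mc{Q}) = \mc{O}.
\]
Pre-multiplying by $(\mc{Q}\n\mc{P}-\mc{Q})^{H}$ gives
\[
(\mc{A}\n(\mc{Q}\n\mc{P}-\mc{Q}))^{H}\m\mc{M}\m(\mc{A}\n(\mc{Q}\n\mc{P}-\mc{Q})) = \mc{O},
\]
so positive definiteness of $\mc{M}$ forces $\mc{A}\n\mc{Q}\n\mc{P} = \mc{A}\n\mc{Q}$. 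Pre-multiplying by $\mc{A}^{\dg}_{\mc{M}\mc{N}}$ then produces $\mc{P}\n\mc{Q}\n\mc{P} = \mc{P}\n\mc{Q}$.

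To finish, I observe that $\mc{N}\n\mc{P}\n\mc{Q}\n\mc{P}$ is Hermitian by a direct computation using $\mc{P}^{H}\n\mc{N} = \mc{N}\n\mc{P}$ twice and $\mc{Q}^{H}\n\mc{N} = \mc{N}\n\mc{Q}$, so the identity $\mc{P}\n\mc{Q}\n\mc{P} = \mc{P}\n\mc{Q}$ forces $\mc{N}\n\mc{P}\n\mc{Q}$ itself to be Hermitian; by the initial reduction this gives the desired commutativity $\mc{P}\n\mc{Q} = \mc{Q}\n\mc{P}$. The main obstacle I anticipate is the conversion of the one-sided identity $(\mc{P}\n\mc{Q}-\mc{Q})\n\mc{A}^{\#}_{\mc{M}\mc{N}}\m\mc{A} = \mc{O}$ into $\mc{A}\n\mc{Q}\n\mc{P} = \mc{A}\n\mc{Q}$: juggling the $\mc{M}$- and $\mc{N}$-weights through the conjugate transpose is delicate, and the final cancellation genuinely needs the positive definiteness of $\mc{M}$ via the Gram-type argument above; everything else is bookkeeping that follows from the idempotency and $\mc{N}$-Hermicity of $\mc{P}$ and $\mc{Q}$.
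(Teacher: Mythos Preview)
Your argument is correct. The paper itself does not supply a proof of this final theorem, so there is no line-by-line comparison to make. That said, the paper's earlier lemmas point to a considerably shorter route than yours: by Theorem~\ref{theorem4} the reverse-order law gives
\[
\mc{A}^{\dg}_{\mc{M}\mc{N}}\m\mc{A}\n\mc{B}=\mc{B}\lp(\mc{A}\n\mc{B})^{\dg}_{\mc{M}\mc{L}}\m\mc{A}\n\mc{B},
\]
and substituting $(\mc{A}\n\mc{B})^{\dg}_{\mc{M}\mc{L}}=\mc{B}^{\dg}_{\mc{N}\mc{L}}\n\mc{A}^{\dg}_{\mc{M}\mc{N}}$ turns the right-hand side into $\mc{Q}\n\mc{P}\n\mc{B}$, which is precisely condition~\eqref{eq10}; the ``equivalent conditions'' lemma then delivers $\mc{P}\n\mc{Q}=\mc{Q}\n\mc{P}$ at once. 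Your path---passing through \eqref{role2}, the commutativity of $\mc{Q}$ with $\mc{A}^{\#}_{\mc{M}\mc{N}}\m\mc{A}$, and then a Gram-type cancellation that genuinely uses the positive definiteness of $\mc{M}$ to deduce $\mc{A}\n\mc{Q}\n\mc{P}=\mc{A}\n\mc{Q}$ and hence $\mc{P}\n\mc{Q}\n\mc{P}=\mc{P}\n\mc{Q}$---is sound but longer; its advantage is that it bypasses Theorem~\ref{theorem4} entirely and makes explicit where positive definiteness actually enters.
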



\section*{References}

\bibliographystyle{amsplain}

\end{document}